\newtheorem{thm}{Theorem}[section]
\newtheorem*{thmA}{Theorem A}
\newtheorem*{thmA'}{Theorem A'}
\newtheorem{lem}[thm]{Lemma}
\newtheorem{prop}[thm]{Proposition}
\theoremstyle{definition}
\newtheorem*{ack}{Acknowledgments}
\theoremstyle{remark}
\newtheorem{rem}[thm]{Remark}
\numberwithin{equation}{section}
\renewcommand{\(}{\left(}
\renewcommand{\)}{\right)}
\renewcommand{\~}{\tilde}
\renewcommand{\-}{\bar}
\renewcommand{\a}{\alpha}
\renewcommand{\b}{\beta}
\newcommand{\g}{\gamma}
\renewcommand{\d}{\delta}
\newcommand{\e}{\varepsilon}
\renewcommand{\k}{\kappa}
\newcommand{\D}{\Delta}
\renewcommand{\t}{\theta}
\newcommand{\s}{\sigma}
\newcommand{\G}{\Gamma}
\newcommand{\ra}{\rightarrow}
\begin{document}
\title[On the mean curvature type flow]{On the mean curvature type flow for convex capillary hypersurfaces in the ball}
\author[Y. Hu]{Yingxiang Hu}
\address{School of Mathematics, Beihang University, Beijing 100191, P.R. China}
\email{\href{mailto:huyingxiang@buaa.edu.cn}{huyingxiang@buaa.edu.cn}}
\author[Y. Wei]{Yong Wei}
\address{School of Mathematical Sciences, University of Science and Technology of China, Hefei 230026, P.R. China}
\email{\href{mailto:yongwei@ustc.edu.cn}{yongwei@ustc.edu.cn}}
\author[B. Yang]{Bo Yang}
\address{Institute of Mathematics, Academy of Mathematics and Systems Sciences, Chinese Academy of Sciences,
Beijing, 100190, P. R. China}
\email{\href{mailto:boyang16@amss.ac.cn}{boyang16@amss.ac.cn}}
\author[T. Zhou]{Tailong Zhou}
\address{School of Mathematical Sciences, University of Science and Technology of China, Hefei 230026, P.R. China}
\email{\href{mailto:ztl20@ustc.edu.cn}{ztl20@ustc.edu.cn}}

\subjclass[2010]{53C44, 53C21, 35K93, 52A40}
\keywords{Alexandrov-Fenchel inequalities, quermassintegral, capillary hypersurfaces in a ball, mean curvature type flow}
\begin{abstract}
In this paper, we study the mean curvature type flow for hypersurfaces in the unit Euclidean ball with capillary boundary, which was introduced by Wang-Xia \cite{Wang-Xia2019} and Wang-Weng \cite{WW2020}. We show that if the initial hypersurface is strictly convex, then the solution of this flow is strictly convex for $t>0$, exists for all positive time and converges smoothly to a spherical cap. As an application, we prove a family of new Alexandrov-Fenchel inequalities for convex hypersurfaces in the unit Euclidean ball with capillary boundary.
\end{abstract}	

\maketitle
\tableofcontents

\section{Introduction}
In this paper, we are interested in the mean curvature type flow for hypersurfaces  in the unit Euclidean ball with capillary boundary, which was introduced recently by Wang-Xia \cite{Wang-Xia2019} and Wang-Weng \cite{WW2020}. We first describe some definitions and notations for such hypersurfaces. The readers are referred to \cite{Weng-Xia2022} for more details.  Let $\Sigma\subset\bar{\mathbb{B}}^{n+1} (n\geq 2)$ be a properly embedded smooth hypersurface in the unit Euclidean ball $\bar{\mathbb{B}}^{n+1}$, given by an embedding $x:\bar{\mathbb{B}}^n\to\bar{\mathbb{B}}^{n+1}$ satisfying
\begin{align*}
\Sigma=x(\mathbb{B}^n)\subset \mathbb{B}^{n+1},\quad  \partial\Sigma=x(\partial\mathbb{B}^n)\subset \mathbb{S}^n.
\end{align*}
Let $\-N$ be the unit outward normal of $\mathbb S^n=\partial \mathbb B^{n+1}$, and $\nu$ be a smooth choice of the unit normal of $\Sigma$. For $\theta\in(0,\pi)$, we call that $\Sigma$ has  $\theta$-capillary boundary $\partial\Sigma$ on $\mathbb{S}^n$ if $\Sigma$ intersects $\mathbb S^n$ at the constant contact angle $\t$, that is,
\begin{equation*}
\langle\bar{N}\circ x,\nu\rangle =-\cos\theta, \quad \text{along $\partial \mathbb B^n$}.
\end{equation*}
In particular, if $\theta=\frac{\pi}{2}$, i.e., $\Sigma$ intersects $\mathbb S^n$ orthogonally, we call that $\Sigma$ has free boundary. Two model examples are the spherical cap of radius $r$ around a constant unit vector $e\in \mathbb{S}^n\subset\mathbb R^{n+1}$ with $\t$-capillary boundary, given by
\begin{align}\label{s1:spherical-cap}
C_{\t,r}(e):=\left\{x\in \-{\mathbb B}^{n+1}:\left| x-\sqrt{r^2+2r\cos\t+1}e\right|=r\right\},
\end{align}
and the {flat ball} around $e$ with $\t$-capillary boundary, given by
\begin{align}\label{s1:flat-disk}
C_{\t,\infty}(e):=\{ x\in \-{\mathbb B}^{n+1}:\langle x,e\rangle=\cos\t\}.
\end{align}

Denote the principal curvatures of $\Sigma$ by $\kappa=(\kappa_1,\cdots,\kappa_n)$.  For $k=1,\cdots,n$, we denote by $H_k$ the normalized $k$th mean curvature of $\Sigma$, which is defined as the normalized $k$th elementary symmetric polynomial of $\kappa$:
\begin{equation*}
H_k=\binom{n}{k}^{-1}\s_k(\kappa), \quad k=1,\cdots,n.
\end{equation*}
We also denote the mean curvature as $H=nH_1$. We say that $\Sigma$ is convex (resp. strictly convex) if its principal curvatures $\kappa_i\geq 0$ (resp. $\kappa_i>0$). In this paper, we always assume that $\Sigma$ is convex.  Denote by $\widehat{\partial\Sigma}$ the convex body in $\mathbb{S}^n$ enclosed by $\partial \Sigma$, and $\widehat{\Sigma}$ the convex domain in $\overline{\mathbb{B}}^{n+1}$ enclosed by $\widehat{\partial\Sigma}$ and $\Sigma$. See Figure \ref{fig1}.  We choose the unit normal $\nu$ of $\Sigma$ as the one pointing outward of $\widehat{\Sigma}$. For each $e\in \mathbb{S}^n\subset\mathbb{R}^{n+1}$, the smooth vector field $X_e$ in $\mathbb{R}^{n+1}$ defined by
\begin{equation}\label{eq-Xe}
X_e=\langle x,e\rangle x-\frac{1}{2}(|x|^2+1)e
\end{equation}
is a conformal Killing vector field in $\mathbb{R}^{n+1}$ and its restriction on $\partial\mathbb{B}^{n+1}$ is a tangential vector field on $\partial\mathbb{B}^{n+1}$ (see \cite[Prop. 3.1]{Wang-Xia2019-2}).  Using $X_e$, the following Minkowski type formula (see \cite[Prop. 2.8]{Weng-Xia2022}) holds:
\begin{equation}\label{s1:Minkowski-identity}
\int_{\Sigma}{H_{k-1} \langle x+\cos\theta\nu,e \rangle}\,dA=\int_{\Sigma}{H_k\langle X_e,\nu\rangle}\, dA,\quad k=1,\cdots, n.
\end{equation}

Based on the formula \eqref{s1:Minkowski-identity}, the following mean curvature type flow with $\theta$-capillary boundary was introduced by Wang and Xia \cite{Wang-Xia2019} for the case $\t=\frac{\pi}{2}$, and later by Wang and Weng \cite{WW2020} for general case $\t\in (0,\pi)$:
\begin{equation}\label{Guan-Li-flow}
\left\{\begin{aligned}
\left(\partial_t x\right)^{\bot}&=\Big(n \langle x+\cos\theta\nu,e\rangle-H\langle X_e,\nu\rangle\Big)\nu &\text{in}\quad \bar{\mathbb{B}}^n \times[0,T),\\
\langle\bar{N}\circ x,\nu\rangle&=-\cos\theta  &\text{on}\quad \partial\bar{\mathbb{B}}^n \times[0,T),\\
x(\cdot,0)&=x_0(\cdot)  &\text{on} \quad \bar{\mathbb{B}}^n,
\end{aligned}\right.
\end{equation}
where $(\cdot)^{\bot}$ denotes the projection to the normal bundle of $\Sigma$. Denote $\Sigma_t=x(\bar{\mathbb{B}}^n,t)$ as the solution of the flow \eqref{Guan-Li-flow}. The flow \eqref{Guan-Li-flow} has a property that the volume $|\widehat{\Sigma_t}|$ is preserved which follows from \eqref{s1:Minkowski-identity} for $k=1$. The convergence results \cite{Wang-Xia2019,WW2020} of the flow \eqref{Guan-Li-flow} for star-shaped initial hypersurface $\Sigma$, i.e., $\langle X_e,\nu\rangle >0$ holds everywhere on $\Sigma$, can be stated as follows.
\begin{thmA}
Let $\Sigma\subset\bar{\mathbb{B}}^{n+1}(n\geq 2)$ be a properly embedded smooth hypersurface with $\theta$-capillary boundary, given by an embedding: $x:\bar{\mathbb{B}}^n\to\Sigma\subset\bar{\mathbb{B}}^{n+1}$. Assume that $\Sigma$ is star-shaped with respect to $e\in \mathbb S^n$, i.e., $\langle X_e,\nu\rangle >0$ holds everywhere in $\Sigma$.
\begin{enumerate}[$(i)$]
\item \cite{Wang-Xia2019} If $\t=\frac{\pi}{2}$, then the solution $\Sigma_t$ of the flow \eqref{Guan-Li-flow} exists for all positive time and converges smoothly to a spherical cap with free boundary as $t\ra \infty$, whose enclosed domain has the same volume as $|\widehat{\Sigma}|$.
\item \cite{WW2020} If $\t$ satisfies $|\cos\t|<\frac{3n+1}{5n-1}$, then the solution $\Sigma_t$ of the flow \eqref{Guan-Li-flow} exists for all positive time and subsequently converges smoothly to a spherical cap with $\t$-capillary boundary as $t\ra \infty$, whose enclosed domain has the same volume as $|\widehat{\Sigma}|$.
\end{enumerate}
\end{thmA}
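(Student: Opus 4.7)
The plan is to treat the flow \eqref{Guan-Li-flow} as a quasilinear parabolic equation with an oblique boundary condition and execute the standard four-step scheme: short-time existence, a priori estimates, long-time existence, convergence. The hypothesis $\langle X_e,\nu\rangle>0$ is what makes the analysis tractable, because it forces $\Sigma_t$ to remain a graph and supplies a positive denominator for the auxiliary functions used in the maximum-principle arguments.

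Star-shapedness with respect to $e$ allows one to write $\Sigma_t$ as a graph of a scalar function $u(\cdot,t)$ over a fixed spherical domain, in coordinates adapted to the conformal Killing field $X_e$ of \eqref{eq-Xe}. In these coordinates \eqref{Guan-Li-flow} becomes a quasilinear parabolic equation for $u$, and the capillary condition becomes a linear oblique Neumann-type condition, so short-time existence follows from the standard theory of oblique parabolic boundary value problems. Two global identities then drive the long-time analysis: volume preservation $\frac{d}{dt}|\widehat{\Sigma_t}|=0$, which is immediate from \eqref{s1:Minkowski-identity} with $k=1$; and area monotonicity, obtained by combining $\frac{d}{dt}dA_t=HF\,dA_t$ with \eqref{s1:Minkowski-identity} for $k=2$ and Newton's inequality $H^2\ge n^2H_2$, which yields
\[
\tfrac{d}{dt}|\Sigma_t|=n^2\int_{\Sigma_t}H_2\langle X_e,\nu\rangle\,dA-\int_{\Sigma_t}H^2\langle X_e,\nu\rangle\,dA\le 0.
\]

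From here I would (a) preserve star-shapedness by a maximum principle argument applied to $\langle X_e,\nu\rangle^{-1}$, using a Hopf-type analysis at the capillary boundary in which the contact-angle identity is used to exclude boundary minima; (b) deduce $C^0$ and $C^1$ bounds from the resulting uniform control of $\langle X_e,\nu\rangle$ and $\langle x,e\rangle$, with uniform parabolicity of the PDE coming automatically from the preserved positivity of $\langle X_e,\nu\rangle$; and (c) obtain an upper bound on $H$ by applying the maximum principle to an auxiliary function such as $H/(\langle X_e,\nu\rangle-c)$ for a suitable $c=c(\theta)$, with the capillary boundary contribution absorbed via the contact-angle identity. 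Given these curvature estimates, Krylov--Safonov together with Evans--Krylov for oblique boundary problems yields $C^{2,\alpha}$ regularity, bootstrapping gives smooth long-time existence, and the area monotonicity forces $\int(H^2-n^2H_2)\langle X_e,\nu\rangle\,dA\to 0$ along a subsequence. The equality case of Newton's inequality then forces umbilicity of any subsequential limit, and the capillary condition together with the preserved volume identifies it as one of the spherical caps $C_{\theta,r}(e)$ in \eqref{s1:spherical-cap}; a Simon-type uniqueness-at-infinity argument promotes subsequential convergence to full convergence.

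The decisive and case-dependent step is the curvature estimate (c): the oblique boundary contribution produced by the capillary condition is not sign-definite for general $\theta$. For $\theta=\frac{\pi}{2}$ this boundary term vanishes identically and part (i) goes through cleanly, but for other contact angles one must absorb it quantitatively into the favourable interior term coming from the maximum principle, and it is precisely this absorption that forces the restriction $|\cos\theta|<\frac{3n+1}{5n-1}$ in part (ii). Enlarging the admissible range of $\theta$, let alone dropping star-shapedness in favour of convexity while proving its preservation, requires a more delicate choice of auxiliary test function or a new geometric identity at the capillary boundary, which is presumably the main technical hurdle the present paper is designed to overcome.
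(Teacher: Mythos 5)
Theorem A is not proved in this paper: it is quoted from \cite{Wang-Xia2019} (part (i)) and \cite{WW2020} (part (ii)), so there is no internal proof to compare your proposal against. That said, your outline is a faithful reconstruction of the scheme used in those two references: graphical reduction via the conformal Killing field $X_e$, preservation of $\langle X_e,\nu\rangle>0$, $C^0$--$C^1$ bounds, a maximum-principle curvature bound on $H$, parabolic regularity, and convergence driven by the monotone quantity that degenerates only on umbilical caps.

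Two points deserve correction. First, for $\theta\neq\frac{\pi}{2}$ the quantity $|\Sigma_t|$ is \emph{not} the right monotone functional: the tangential component $V|_{\partial\Sigma_t}=f\cot\theta\,\mu$ contributes a boundary term $\cot\theta\int_{\partial\Sigma_t}f\,ds$ to $\frac{d}{dt}|\Sigma_t|$, and what is actually monotone is the capillary quermassintegral $W_{1,\theta}(\widehat{\Sigma_t})=\frac{1}{n+1}\bigl(|\Sigma_t|-\cos\theta\,|\widehat{\partial\Sigma_t}|\bigr)$, whose variation \eqref{s2.dW} absorbs that boundary term. Your display therefore holds only when $\theta=\frac{\pi}{2}$; in general you should run the argument with $W_{1,\theta}$. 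Second, part (ii) of Theorem A asserts only \emph{subsequential} smooth convergence; promoting this to full convergence by a Simon--\L ojasiewicz-type stability argument is not claimed there and is not needed to establish the stated theorem, so your closing sentence overstates what part (ii) gives. Finally, note that the boundary obstruction you identify is not in the $H$-estimate itself — the flow satisfies $\nabla_\mu H=0$ on $\partial\Sigma_t$ by \eqref{s3:bdry-condition}, so Hopf gives no boundary contribution to the mean-curvature bound — the angle restriction in \cite{WW2020} enters elsewhere in the second-order estimates; the broad picture you paint (a boundary term that fails to be sign-definite for general $\theta$) is in the right spirit, but be careful not to attribute it to the $H$-equation.
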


The first aim of this paper is to prove the preservation of the strict convexity along the flow \eqref{Guan-Li-flow}. The main tool is the tensor maximum principle in Theorem \ref{s1:thm-max principle}, which was developed by the authors in \cite{HWYZ2022}.  With help of the convexity preserving, we establish the convergence result of the flow \eqref{Guan-Li-flow} for convex hypersurfaces with $\theta$-capillary boundary in the unit Euclidean ball.
\begin{thm}\label{thm-con}
Let $\Sigma\subset\bar{\mathbb{B}}^{n+1}(n\geq 2)$ be a properly embedded, strictly convex smooth hypersurface with $\theta$-capillary boundary ($\theta\in (0,\frac{\pi}{2}]$), given by an embedding: $x:\bar{\mathbb{B}}^n\to\Sigma\subset\bar{\mathbb{B}}^{n+1}$. Then there exists a constant unit vector $e\in \mathbb S^n$ such that $\langle X_e,\nu\rangle>0$ holds everywhere on $\Sigma$, and the solution $\Sigma_t$  of the flow \eqref{Guan-Li-flow} starting from $\Sigma$ is strictly convex and exists for all time $t>0$. Moreover, $\Sigma_t$ converges smoothly to a spherical cap $C_{\t,r_{\infty}}(e)$ as $t\ra \infty$, where $r_\infty$ is determined by $|\widehat{C_{\t,r_\infty}}(e)|=|\widehat{\Sigma}|$.
\end{thm}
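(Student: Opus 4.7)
The plan is to split the argument into three essentially independent pieces: (a) producing the vector $e \in \mathbb{S}^n$ so that $\Sigma$ is star-shaped with respect to it initially; (b) establishing preservation of strict convexity along \eqref{Guan-Li-flow} via the boundary tensor maximum principle; and (c) combining (a)--(b) to obtain uniform estimates, long-time existence, and convergence.

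For (a), I would show that any strictly convex $\Sigma$ with $\t$-capillary boundary is automatically star-shaped with respect to some $e \in \mathbb{S}^n$. The natural candidate is an interior point of the spherical convex domain $\widehat{\partial\Sigma}\subset\mathbb{S}^n$. Using the explicit expression $X_e = \langle x,e\rangle x - \tfrac12(|x|^2+1)e$, one checks that along the conformal Killing flow of $X_e$ one moves inward into $\widehat\Sigma$; equivalently, a supporting-hyperplane argument applied to the convex body $\widehat\Sigma$ shows $\langle X_e,\nu\rangle>0$ everywhere on $\Sigma$. With this $e$ fixed, the evolution of $\langle X_e,\nu\rangle$ along \eqref{Guan-Li-flow} is of the same shape as in Theorem A, so a maximum principle argument preserves star-shapedness on $[0,T)$.

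The core step is (b). After the standard short-time existence for the mixed boundary parabolic system, I would compute the evolution equation of the Weingarten tensor $h^i{}_j$ along \eqref{Guan-Li-flow}, isolate the reaction terms that are quadratic in $h$, and apply the authors' tensor maximum principle from \cite{HWYZ2022} to the tensor $h_{ij}$. This requires two verifications. The interior null-eigenvector inequality is a Hamilton-type check on the quadratic terms, essentially parallel to mean curvature flow. The genuinely new ingredient is a Neumann-type inequality on $\partial\Sigma$: when a principal curvature would drop to zero at a boundary point, its conormal derivative must satisfy the sign condition required by the boundary tensor maximum principle. Here one uses the $\t$-capillary angle condition together with the umbilicity of $\mathbb{S}^n$ to derive boundary identities for $\nabla h$. \emph{This boundary inequality is the main obstacle}, and I expect it to be precisely where the restriction $\t\in(0,\tfrac\pi2]$ enters: the sign of the relevant angle-dependent coefficient forces $\cos\t\ge 0$.

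For (c), strict convexity on $[\tau,T)$ (any $\tau>0$) yields a uniform positive lower bound on the principal curvatures; combined with preserved star-shapedness from (a), this gives uniform $C^1$ estimates via $\langle X_e,\nu\rangle\ge c>0$. Uniform $C^2$ estimates then follow by combining the lower bound on $\k_i$ with the evolution of $H$, as in the Wang-Xia and Wang-Weng arguments. Parabolic Krylov-Safonov and Evans-Krylov theory in the mixed (capillary) boundary setting upgrade to uniform $C^{k,\a}$ bounds, hence long-time existence. For convergence, the Minkowski identity \eqref{s1:Minkowski-identity} with $k=1$ implies that the enclosed volume $|\widehat{\Sigma_t}|$ is conserved, while a quermassintegral such as $|\Sigma_t|$ is monotone along the flow. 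This Lyapunov structure, together with the uniform higher-order estimates, forces $\Sigma_t$ to subsequentially converge to a critical point of the functional among convex $\t$-capillary hypersurfaces; by the characterization of such critical points, the limit must be a spherical cap $C_{\t,r_\infty}(e)$, and the volume constraint determines $r_\infty$ uniquely. Smooth convergence of the full flow follows by interpolating $C^0$ convergence against the uniform $C^{k,\a}$ bounds and ruling out other subsequential limits.
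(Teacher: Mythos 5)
Your overall architecture (produce $e$, preserve convexity via tensor maximum principle, then barrier/$C^k$ estimates and a Lyapunov functional for convergence) matches the paper's, but there are three substantive gaps.

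First, the logical order in step (a) is backwards. You propose to preserve star-shapedness by running the evolution of $\langle X_e,\nu\rangle$ and applying the maximum principle ``as in Theorem~A.'' But the maximum principle in Theorem~A(ii) (Wang--Weng) works only under the restriction $|\cos\theta|<\frac{3n+1}{5n-1}$; re-running it here would silently reintroduce that hypothesis, whereas the theorem claims all $\theta\in(0,\pi/2]$. The paper's key structural point is that star-shapedness should not be preserved as a separate property at all: once \emph{convexity} is preserved (your step (b)) and the flow stays between the spherical-cap barriers (Lemma \ref{s3:lem-barrier}), Proposition \ref{s2:prop-estimate-convex} yields a quantitative lower bound $\langle X_e,\nu\rangle\ge\delta_2>0$ with $\delta_2$ depending only on the barrier radii. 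So (a) and (b) are \emph{not} independent pieces; (a) is a corollary of (b) together with the barrier estimate.

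Second, your description of the interior null-eigenvector check as ``a Hamilton-type check on the quadratic terms, essentially parallel to mean curvature flow'' misses the main difficulty. In the evolution \eqref{s3:evol-second-fundamental-form} the dangerous reaction terms are the ones proportional to $\nabla_i H$ (and $\nabla_j H$), which have no analogue in the MCF calculation. At a null eigenvector $e_1$ with a curvature gap (case (2) in Theorem \ref{s4:thm-strict-convex}), these first-order-in-$\nabla h$ terms cannot be discarded. The paper absorbs them using the optimized supremum over $\Gamma$ in \eqref{conditon-MP}, producing the good term $2\langle X_e,\nu\rangle\sum_{\ell>m}|\nabla_1h_{\ell\ell}|^2/(\kappa_\ell-\varepsilon)$, and then uses Cauchy--Schwarz together with the key geometric inequality $\langle x,\nu\rangle^2-\langle e,\nu\rangle^2\le-\delta_1\delta_4<0$ (valid for convex capillary hypersurfaces, by Proposition \ref{s2:prop-estimate-convex}, which uses $\theta\le\pi/2$) to complete the square, as in \eqref{s4:tilde-Q_1b}. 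Your expectation that the $\theta\le\pi/2$ restriction enters only through the boundary Neumann inequality is therefore incomplete — the restriction is also essential to the interior estimate via the convexity estimates. Meanwhile, the boundary inequality (both $\xi=e_\alpha$ and $\xi=\mu$) follows directly from Proposition \ref{boundary-h-property} and $\nabla_\mu H=0$; it is the more routine half.

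Third, you propose to apply the tensor maximum principle to $h_{ij}$ itself. That at best preserves $h_{ij}\ge 0$, which is weak convexity, not the strict convexity claimed in Theorem \ref{thm-con}, and it does not produce the uniform lower bound $\kappa_i\ge\varepsilon$ that the paper proves. The paper instead applies the tensor maximum principle to $S_{ij}=h_{ij}-\varepsilon g_{ij}$ for a fixed small $\varepsilon$ depending only on $\Sigma_0$ and the barrier constants; this gives the uniform strict-convexity bound needed for the conclusion and is compatible with the $\Gamma$-trick and the key inequality provided $4\varepsilon^2+8\varepsilon\le\delta_1\delta_4$. Your step (c) then assumes a ``uniform positive lower bound on principal curvatures'' from strict convexity at $t=\tau$, but without the $\varepsilon$-barrier tensor you have no uniform-in-time control of that lower bound.

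The convergence part (c) is essentially correct in outline and matches the paper: barrier $C^0$ estimate, $\langle X_e,\nu\rangle\ge\delta_2$ implies $C^1$, Möbius reduction to a uniformly oblique scalar quasilinear equation gives higher-order bounds, $W_{1,\theta}$ is monotone by Newton's inequality and the Minkowski formula, the limit is umbilical, and uniqueness of the center $e$ is argued as in Scheuer--Wang--Xia and Weng--Xia.
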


The mean curvature type flow \eqref{Guan-Li-flow} is a locally constrained curvature flow, which was previously considered by Guan and Li \cite{GL-2015} for closed hypersurfaces in space forms. Other kinds of locally constrained curvature flows have been investigated by many authors, see \cite{BGL,GLW-2019,Lambert-Scheuer2021,Sch21,SX-2019,HLW2020,GL-2021,Wei-X2022a,Wei-X2022b}, etc. A main motivation of studying these locally constrained curvature flows is their powerful applications in proving geometric inequalities including the Alexandrov-Fenchel type inequalities for quermassintegrals.

The quermassintegrals for convex hypersurfaces with $\theta$-capillary boundary in $\-{\mathbb B}^{n+1}$ were first introduced by Scheuer, Wang and Xia \cite{Scheuer-Wang-Xia2018} for $\theta=\frac{\pi}2$, and later by Weng and Xia \cite{Weng-Xia2022} for $\t\in(0,\frac{\pi}{2}]$. Let $\Sigma\subset\bar{\mathbb{B}}^{n+1} (n\geq 2)$ be a properly embedded, convex smooth hypersurface with $\theta$-capillary boundary $\partial\Sigma\subset\mathbb{S}^n$, where $\t\in (0,\frac{\pi}{2}]$.  Then the quermassintegrals $W_{k,\theta}(\widehat{\Sigma})$ for $\widehat{\Sigma}$ are defined by \eqref{s2:quermassintegral-0}--\eqref{s2:quermassintegral-2}. In particular,
\begin{align*}
W_{0,\theta}(\widehat{\Sigma})=&|\widehat{\Sigma}|, \qquad W_{1,\theta}(\widehat{\Sigma})=\frac{1}{n+1}\left(|\Sigma|-\cos\theta|\widehat{\partial\Sigma}|\right).
\end{align*}
For strictly convex hypersurfaces  in $\-{\mathbb B}^{n+1}$ with $\theta$-capillary boundary, the following flow
\begin{equation}\label{inverse-harmonic-mean-curvature-flow}
\left\{\begin{aligned}
\left(\partial_t x\right)^{\bot}&=\(\frac{\langle x,e\rangle +\cos\t \langle \nu,e\rangle}{H_n/H_{n-1}}-\langle X_e,\nu\rangle\)\nu  &\text{in}\quad \bar{\mathbb{B}}^n \times[0,T),\\
\langle\bar{N}\circ x,\nu\rangle&=-\cos\theta &\text{on}\quad \partial\bar{\mathbb{B}}^n \times[0,T),\\
x(\cdot,0)&=x_0(\cdot)  &\text{on} \quad \bar{\mathbb{B}}^n,
\end{aligned}\right.
\end{equation}
was studied in \cite{Scheuer-Wang-Xia2018,Weng-Xia2022}, and as an application they proved that for any $\ell=0,1,\cdots,n-1$,  there holds
\begin{equation}\label{eq-AF-n-k}
W_{n,\theta}(\widehat{\Sigma})\geq (f_n\circ f_\ell^{-1})(W_{\ell,\theta}(\widehat{\Sigma})),
\end{equation}
where $f_\ell(r)$ is the strictly increasing function given by $f_\ell(r)=W_{\ell,\theta}(\widehat{C_{\theta,r}})$. 	
Equality holds in \eqref{eq-AF-n-k} if and only if $\Sigma$ is a spherical cap or a flat ball with $\theta$-capillary boundary.

As an application of our Theorem \ref{thm-con}, we prove new sharp Alexandrov-Fenchel type inequalities relating $W_{k,\theta}(\widehat{\Sigma})$ and  $W_{0,\theta}(\widehat{\Sigma})$ for $k\geq 1$.
\begin{thm}\label{co-af}
Let $\Sigma\subset\bar{\mathbb{B}}^{n+1} (n\geq 2)$ be a properly embedded, convex smooth hypersurface in the unit ball with $\theta$-capillary boundary, where $\t\in (0,\frac{\pi}{2}]$. Then for any $k=1,\cdots, n-1$,  there holds
	\begin{equation}\label{eq-AF}
	W_{k,\theta}(\widehat{\Sigma})\geq (f_k\circ f_0^{-1})(W_{0,\theta}(\widehat{\Sigma})).
	\end{equation}
Equality holds in \eqref{eq-AF} if and only if $\Sigma$ is a spherical cap or a flat ball with $\theta$-capillary boundary.
\end{thm}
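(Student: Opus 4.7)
The strategy is to prove \eqref{eq-AF} by flowing $\Sigma$ under the mean curvature type flow \eqref{Guan-Li-flow} and exploiting the fact that $W_{0,\theta}(\widehat{\Sigma_t})$ is preserved while $W_{k,\theta}(\widehat{\Sigma_t})$ is monotone non-increasing for $k\geq 1$; the spherical cap limit from Theorem \ref{thm-con} then yields the sharp lower bound. First assume $\Sigma$ is strictly convex. By Theorem \ref{thm-con}, there is a unit vector $e\in\mathbb{S}^n$ with $\langle X_e,\nu\rangle>0$ everywhere on $\Sigma$, and the solution $\Sigma_t$ of \eqref{Guan-Li-flow} is strictly convex for all $t>0$, exists for all time, and converges smoothly to $C_{\theta,r_\infty}(e)$ where $r_\infty$ is characterized by $W_{0,\theta}(\widehat{C_{\theta,r_\infty}(e)})=W_{0,\theta}(\widehat\Sigma)$, i.e.\ $r_\infty=f_0^{-1}(W_{0,\theta}(\widehat\Sigma))$.

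The core of the argument is the monotonicity of $W_{k,\theta}(\widehat{\Sigma_t})$. Using the first variation formula for the capillary quermassintegrals established in \cite{Weng-Xia2022}, for which the $\theta$-capillary condition makes the boundary contributions vanish, the derivative takes the form
\begin{equation*}
\frac{d}{dt}W_{k,\theta}(\widehat{\Sigma_t})=c_{n,k}\int_{\Sigma_t}H_k\bigl(n\langle x+\cos\theta\,\nu,e\rangle-H\langle X_e,\nu\rangle\bigr)\,dA
\end{equation*}
for a positive constant $c_{n,k}$. Applying the Minkowski identity \eqref{s1:Minkowski-identity} with index $k+1$ to the first term and recalling $H=nH_1$, this rewrites as
\begin{equation*}
\frac{d}{dt}W_{k,\theta}(\widehat{\Sigma_t})=n\,c_{n,k}\int_{\Sigma_t}\bigl(H_{k+1}-H_1H_k\bigr)\langle X_e,\nu\rangle\,dA\leq 0,
\end{equation*}
by the Newton--MacLaurin inequality $H_1H_k\geq H_{k+1}$ valid on the positive cone, together with $\langle X_e,\nu\rangle>0$ (which is preserved along the flow by Theorem \ref{thm-con}). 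Letting $t\to\infty$ gives
\begin{equation*}
W_{k,\theta}(\widehat\Sigma)\geq\lim_{t\to\infty}W_{k,\theta}(\widehat{\Sigma_t})=W_{k,\theta}(\widehat{C_{\theta,r_\infty}(e)})=f_k(r_\infty)=(f_k\circ f_0^{-1})(W_{0,\theta}(\widehat\Sigma)),
\end{equation*}
proving \eqref{eq-AF} under the strict convexity assumption.

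For a general convex $\Sigma$, one treats the flat ball $C_{\theta,\infty}(e)$ separately by a direct computation from \eqref{s1:flat-disk}, and in all remaining cases approximates $\Sigma$ by strictly convex smooth capillary hypersurfaces $\Sigma^\varepsilon\to\Sigma$ with the same contact angle, passing to the limit in \eqref{eq-AF} by continuity of the quermassintegrals. For the equality characterization, equality in \eqref{eq-AF} together with the monotonicity forces $\tfrac{d}{dt}W_{k,\theta}\equiv 0$ along the flow, so $H_{k+1}\equiv H_1H_k$ pointwise on each $\Sigma_t$. The rigidity statement of the Newton--MacLaurin inequality on the positive cone then forces $\Sigma_t$ to be totally umbilic; the classification of umbilic capillary hypersurfaces in $\bar{\mathbb{B}}^{n+1}$ yields that $\Sigma$ is a spherical cap or a flat ball.

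I expect the main difficulty to be the approximation step for convex (but not strictly convex) $\Sigma$, since a naive mollification will generally destroy the prescribed contact angle $\theta$ with $\mathbb{S}^n$; constructing a family of strictly convex capillary hypersurfaces $\Sigma^\varepsilon\to\Sigma$ preserving the angle requires some care, possibly by combining an inward parallel deformation with a smoothing that respects the boundary geometry on $\mathbb{S}^n$. A secondary technical point is to confirm the precise first variation formula for $W_{k,\theta}$ with vanishing boundary contribution under the $\theta$-capillary condition; this should follow from the definitions in \cite{Weng-Xia2022} but needs an integration-by-parts verification.
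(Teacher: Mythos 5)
Your core strategy matches the paper exactly: run the flow \eqref{Guan-Li-flow} from a strictly convex capillary hypersurface, use the variational formula \eqref{s2.dW} together with the Minkowski identity \eqref{s1:Minkowski-identity} (with index $k+1$) and the Newton--MacLaurin inequality to show $W_{0,\theta}$ is preserved while $W_{k,\theta}$ is non-increasing (this is precisely Lemma~\ref{prop-mono}), and let $t\to\infty$ using Theorem~\ref{thm-con} to obtain the sharp lower bound $f_k\circ f_0^{-1}$. Your monotonicity computation, including the constant $c_{n,k}=\tfrac{n+1-k}{n+1}$ and the way the boundary terms vanish under the $\theta$-capillary condition, is the paper's calculation verbatim, so the strictly convex case is complete.

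The two points you flag as difficulties are indeed where your argument is genuinely incomplete. For the approximation of a merely convex $\Sigma$ by strictly convex capillary hypersurfaces, the paper does not mollify or use a parallel deformation; instead it runs the mean curvature flow \eqref{flow-mean} with the $\theta$-capillary boundary condition for a short time and invokes Theorem~\ref{thm.app}: either $\Sigma$ is the flat ball $C_{\theta,\infty}(e)$, or $\Sigma_t$ becomes strictly convex instantaneously for $t>0$, with $\Sigma_t\to\Sigma$ in $C^{2,\alpha}$ as $t\to 0$. This in turn relies on Lemma~\ref{s2:prop-weak-convex-rigidity}, which shows a convex capillary hypersurface that is not $C_{\theta,\infty}$ has an interior strictly convex point, and on the tensor maximum principle applied to $h_{ij}-\phi g_{ij}$ with $\phi$ a subsolution of the heat equation. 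Without a construction of this kind your limiting step is unjustified.

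The second gap is the equality characterization for merely convex $\Sigma$. Your argument forces $\tfrac{d}{dt}W_{k,\theta}\equiv 0$ along the flow and concludes umbilicity, but this applies the flow \eqref{Guan-Li-flow} directly from $\Sigma$, which Theorem~\ref{thm-con} only licenses when $\Sigma$ is strictly convex. When $\Sigma$ is convex but not strictly convex, the inequality has been obtained only by approximation, and a separate rigidity argument is needed; the paper handles this by invoking the argument of \cite{GL09} together with the existence of an interior strictly convex point from Lemma~\ref{s2:prop-weak-convex-rigidity}. Your proposal does not supply this step.
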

\begin{rem}
The inequality \eqref{eq-AF} for $k=n$ is the inequality \eqref{eq-AF-n-k} for $\ell=0$ and has been proved in \cite{Scheuer-Wang-Xia2018,Weng-Xia2022}.
\end{rem}

The paper is organized as follows: In \S \ref{sec:2}, we give some preliminaries for convex capillary hypersurfaces in the unit ball, including some estimates on the geometric quantities on the hypersurfaces. We also recall the definition of quermassintegrals for such hypersurfaces. In \S \ref{sec:3}, the evolution equations along the flow \eqref{Guan-Li-flow} will be deduced. In \S \ref{sec:5}, we apply the tensor maximum principle to prove the preservation of convexity along the flow \eqref{Guan-Li-flow}. In \S \ref{sec:6}, we give the proofs of Theorem \ref{thm-con} and Theorem \ref{co-af}.

\begin{ack}
The authors would like to thank Professor Chao Xia for his helpful suggestions. The research was surpported by National Key Research and Development Program of China 2021YFA1001800 and 2020YFA0713100, National Natural Science Foundation of China NSFC11721101 and NSFC12101027, and Research grant KY0010000052 from University of Science and Technology of China.
\end{ack}

\section{Convex hypersurfaces with capillary boundary}\label{sec:2}

In this section, we collect some preliminaries on smooth hypersurfaces in the unit ball with capillary boundary, the quermassintegrals, and some estimates on convex hypersurfaces with capillary boundary.
\subsection{Hypersurfaces in the ball with capillary boundary}

Let $\t\in (0,\frac{\pi}{2}]$. Suppose that $\Sigma\subset\bar{\mathbb{B}}^{n+1}$ is a smooth, properly embedded, convex hypersurface with $\theta$-capillary boundary, which is given by an embedding $x:\-{\mathbb B}^n\ra \Sigma$ such that
$$
\Sigma=x(\mathbb B^{n}) \subset \mathbb B^{n+1}, \quad \partial \Sigma=x(\partial \mathbb B^{n})\subset \mathbb S^n.
$$
Then $\partial\Sigma\subset\mathbb{S}^{n}$ is a convex hypersurface of the unit sphere and bounds a convex body in $\mathbb{S}^n$, which we denote by $\widehat{\partial\Sigma}$, cf. \cite[Theorem 1.1]{CW1970}. Denote by $\widehat{\Sigma}$ the bounded domain in $\-{\mathbb{B}}^{n+1}$ enclosed by $\Sigma$ and $\widehat{\partial\Sigma}$. We denote by $\nu$ the unit normal field of $\Sigma$ and $\bar{N}$ the position vector of $\mathbb{S}^n$. We identify the outward pointing conormal of $\partial\Sigma\subset\Sigma$ with $\mu$ and the unit normal of $\partial \Sigma$ in $\mathbb{S}^n$ with $\bar{\nu}$ such that $\{\nu,\mu\}$ and $\{\bar{\nu},\bar{N}\circ x\}$ have the same orientation in the normal bundle of $\partial\Sigma\subset\mathbb{S}^n$, see Figure \ref{fig1}.
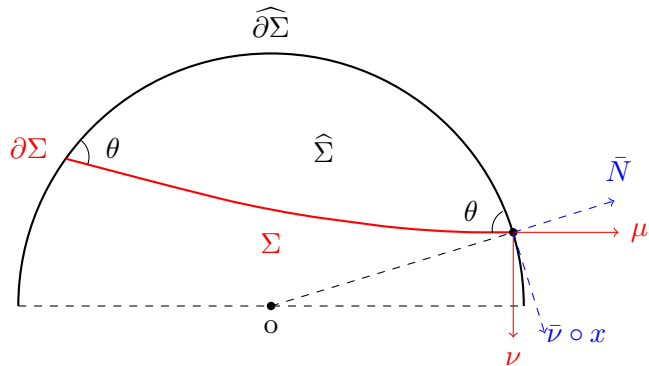
\begin{figure}[htbp]
	\begin{tikzpicture}[scale=1.4,line width=0.4pt]
	\draw[thick] (2.4,0) arc (0:180:2.4);
	\draw[dashed](-2.4,0)--(2.4,0);
	\filldraw[thick](0,0) circle(0.03);
	\node at (0,-0.2) {o};
	\node at (0,0.6) {\textcolor{red}{$\Sigma$}};
	\node at (0,2.7) {$\widehat{\partial\Sigma}$};
	\node at (0.5,1.5) {$\widehat{\Sigma}$};
	
	\draw[red,thick](-1.95,1.4) [rounded corners=10pt] -- (-1,1.15)
	[rounded corners=20pt]--(0,0.9)
	[rounded corners=20pt]--(1.5,0.7)
	[rounded corners=20pt]--(2.3,0.7);
	
	\node at (-2.3,1.5) {\textcolor{red}{$\partial \Sigma$}};
	\filldraw[thick](2.3,0.7) circle(0.03);
	
	\draw (2.2,0.9) arc (120:190:0.2);
	\node at (1.9,0.9) {$\theta$};
	\draw (-1.8,1.57) arc (50:-20:0.2);
	\node at (-1.5,1.5) {$\theta$};
	
	\draw[red,->] (2.3,0.7)--(3.3,0.7);
	\node at (3.5,0.7) {\textcolor{red}{$\mu$}};
	\draw[red,->] (2.3,0.7)--(2.3,-0.3);
	\node at (2.3,-0.5) {\textcolor{red}{$\nu$}};

	\draw[blue,dashed,->] (2.3,0.7)--(3.26,1);
	\draw[dashed](0,0)--(2.3,0.7);
	\node at (3.3,1.3) {\textcolor{blue}{$\bar{N}$}};
	\draw[blue,dashed,->] (2.3,0.7)--(2.6,-0.26);
	\node at (2.9,-0.26) {\textcolor{blue}{$\bar{\nu}\circ x$}};
	
	\end{tikzpicture}
	\caption{A convex $\t$-capillary hypersurface in the unit ball}\label{fig1}
\end{figure}

It follows from $\langle \-N\circ x,\nu\rangle=-\cos \t$ that
\begin{equation}\label{normal transform}
\left\{\begin{aligned}
\bar{N}\circ x=&\sin\theta\mu-\cos\theta\nu,\\
\bar{\nu}=&\cos\theta\mu+\sin\theta\nu.
\end{aligned}\right.
\end{equation}

We denote by $D$ the Levi-Civita connection of $\mathbb R^{n+1}$ with respect to the Euclidean metric $\d$, and $\nabla$ the Levi-Civita connection on $\Sigma$ with respect to the induced metric from the embedding $x:\-{\mathbb B}^n \ra \Sigma \subset \mathbb R^{n+1}$, respectively. The second fundamental of $\Sigma$ in $\mathbb R^{n+1}$ is given by
\begin{equation*}
h(X,Y):=-\langle D_X Y,\nu\rangle,\ \ X,Y\in T(\Sigma).
\end{equation*}
Note that $\partial\Sigma$ can be viewed as a smooth closed hypersurface in $\mathbb{S}^n$ and $\Sigma$, respectively. The second fundamental form of $\partial\Sigma$ in $\mathbb{S}^n$ is given by
\begin{equation*}
\widehat{h}(X,Y):=-\langle\nabla^{\mathbb{S}^n}_X Y,\bar{\nu}\rangle=-\langle D_XY,\bar{\nu}\rangle,\ \ X,Y\in T(\partial\Sigma).
\end{equation*}
The second fundamental form of $\partial\Sigma$ in $\Sigma$ is given by
\begin{equation*}
\tilde{h}(X,Y):=-\langle\nabla_X Y,\mu\rangle=-\langle D_XY,\mu\rangle,\ \ X,Y\in T(\partial\Sigma).
\end{equation*}

\begin{prop}[\cite{Weng-Xia2022}]\label{boundary-h-property}
	Let $\Sigma\subset\bar{\mathbb{B}}^{n+1}$ be a $\theta$-capillary hypersurface. Let $\{e_\alpha\}_{\alpha=1}^{n-1}$ be an orthonormal frame of $\partial\Sigma$. Then the following relations hold on $\partial \Sigma$:
\begin{enumerate}[$(1)$]
  \item $\mu$ is a principal direction of $\Sigma$, i.e., $h(\mu,e_\alpha)=0$.
  \item $h_{\alpha\beta}=\sin\theta\widehat{h}_{\alpha\beta}-\cos\theta\delta_{\alpha\beta}$.
  \item $\tilde{h}_{\alpha\beta}=\cos\theta\widehat{h}_{\alpha\beta}+\sin\theta\delta_{\alpha\beta}=\cot\theta h_{\alpha\beta}+\frac{1}{\sin\theta}\delta_{\alpha\beta}$.
  \item $\nabla_\mu h_{\alpha\beta}=\tilde{h}_{\beta\gamma}\left(h_{\mu\mu}\delta_{\alpha\gamma}-h_{\alpha\gamma}\right)$.
\end{enumerate}
\end{prop}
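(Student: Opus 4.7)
The plan is to derive all four identities by differentiating the capillary boundary condition and the normal decomposition \eqref{normal transform} along tangent directions $e_\alpha$ of $\partial\Sigma$, then invoking the Euclidean Codazzi identity for (4). As a warm-up for (1), I differentiate $\langle\bar N\circ x,\nu\rangle=-\cos\theta$ in the direction $e_\alpha\in T\partial\Sigma$. Because $\bar N$ is the position vector of $\mathbb S^n$, the derivative $D_{e_\alpha}(\bar N\circ x)=e_\alpha$ is tangent to $\Sigma$, hence orthogonal to $\nu$. Substituting $\bar N=\sin\theta\,\mu-\cos\theta\,\nu$ together with the Weingarten formula $D_{e_\alpha}\nu=h_{\alpha\beta}e_\beta+h_{\alpha\mu}\mu$ reduces the identity to $\sin\theta\,h_{\alpha\mu}=0$, which gives $h_{\alpha\mu}=0$ since $\sin\theta\neq 0$.

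For (2) and (3), I promote the previous scalar identity to the vector equation $D_{e_\alpha}(\bar N\circ x)=D_{e_\alpha}(\sin\theta\,\mu-\cos\theta\,\nu)$. The left side equals $e_\alpha$. On the right, I use the Weingarten formula above together with the decomposition $D_{e_\alpha}\mu=\tilde h_{\alpha\gamma}e_\gamma-h_{\alpha\mu}\nu$, whose tangential part comes from $\langle\nabla_X\mu,Y\rangle=\tilde h(X,Y)$ and whose $\nu$-component comes from differentiating $\langle\mu,\nu\rangle=0$. By (1) the $\mu$- and $\nu$-components vanish automatically, while matching the $e_\beta$-component yields
\begin{equation*}
\delta_{\alpha\beta}=\sin\theta\,\tilde h_{\alpha\beta}-\cos\theta\,h_{\alpha\beta},
\end{equation*}
which is the second equality in (3). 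In parallel, I expand $\widehat h_{\alpha\beta}=-\langle D_{e_\alpha}e_\beta,\bar\nu\rangle$ using $\bar\nu=\cos\theta\,\mu+\sin\theta\,\nu$ to get $\widehat h_{\alpha\beta}=\cos\theta\,\tilde h_{\alpha\beta}+\sin\theta\,h_{\alpha\beta}$. Solving this $2\times 2$ linear system relating $(h,\tilde h)$ to $\widehat h$ then produces (2) together with the first equality in (3).

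The main step is (4). I begin from the Codazzi identity in $\mathbb R^{n+1}$, which states that $\nabla h$ is totally symmetric, so $\nabla_\mu h_{\alpha\beta}=\nabla_{e_\alpha}h_{\mu\beta}$. Since (1) says $h_{\mu\beta}\equiv 0$ along $\partial\Sigma$, the term $e_\alpha(h(\mu,e_\beta))$ in the expansion of the right side vanishes, leaving
\begin{equation*}
\nabla_{e_\alpha}h_{\mu\beta}=-h(\nabla_{e_\alpha}\mu,e_\beta)-h(\mu,\nabla_{e_\alpha}e_\beta).
\end{equation*}
I substitute $\nabla_{e_\alpha}\mu=\tilde h_{\alpha\gamma}e_\gamma$ into the first term, and decompose $\nabla_{e_\alpha}e_\beta$ into its $\partial\Sigma$-tangential part plus the normal piece $-\tilde h_{\alpha\beta}\mu$ in the second; applying (1) once more to kill the residual $h(\mu,e_\gamma)$ cross-terms produces $\nabla_\mu h_{\alpha\beta}=\tilde h_{\alpha\beta}h_{\mu\mu}-\tilde h_{\alpha\gamma}h_{\beta\gamma}$. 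Finally, (3) shows $\tilde h=\cot\theta\,h+(1/\sin\theta)I$ on $T\partial\Sigma$, so $\tilde h$ and $h$ commute as endomorphisms of $T\partial\Sigma$; this allows the symmetrisation $\tilde h_{\alpha\gamma}h_{\beta\gamma}=\tilde h_{\beta\gamma}h_{\alpha\gamma}$ and delivers the form stated in (4). The principal bookkeeping hurdle throughout is keeping the sign conventions for the three second fundamental forms $h$, $\tilde h$, and $\widehat h$ consistent when decomposing $D_{e_\alpha}\mu$ and $\nabla_{e_\alpha}e_\beta$ into tangential and normal parts.
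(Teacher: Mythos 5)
Your proof is correct. The paper only cites this proposition from Weng–Xia without giving a proof, and your argument is the standard derivation: differentiate the capillary angle condition along $\partial\Sigma$ to get (1), differentiate the vector identity \eqref{normal transform} and read off tangential components to get the linear system giving (2)–(3), and use Codazzi plus the covariant-derivative expansion of $(\nabla_{e_\alpha}h)(\mu,e_\beta)$ for (4). All sign conventions are handled consistently, including the observation that $\tilde h$ and $h$ commute on $T\partial\Sigma$ (a consequence of (3)), which is needed to pass from $\tilde h_{\alpha\gamma}h_{\beta\gamma}$ to the stated form $\tilde h_{\beta\gamma}h_{\alpha\gamma}$.
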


\subsection{Quermassintegrals}
The quermassintegrals for $\theta$-capillary hypersurfaces in the unit ball were first introduced for $\t=\frac{\pi}{2}$ by Scheuer, Wang and Xia \cite{Scheuer-Wang-Xia2018}, and later generalized by Weng and Xia \cite{Weng-Xia2022} to $\t\in (0,\frac{\pi}{2}]$. Let $\Sigma\subset\bar{\mathbb{B}}^{n+1}$ be a smooth convex, embedded hypersurface with $\theta$-capillary boundary $\partial\Sigma\subset\mathbb{S}^n$. The quermassintegrals for $\widehat{\Sigma}$ are defined by
\begin{align}
W_{0,\theta}(\widehat{\Sigma})=&|\widehat{\Sigma}|, \label{s2:quermassintegral-0}\\
W_{1,\theta}(\widehat{\Sigma})=&\frac{1}{n+1}\left(|\Sigma|-\cos\theta W_0^{\mathbb{S}^n}(\widehat{\partial\Sigma})\right),
\end{align}
and for $1\leq k\leq n-1$,
\begin{align}\label{s2:quermassintegral-2}
\begin{aligned}
&W_{k+1,\theta}(\widehat{\Sigma})=\frac{1}{n+1}\left\{\int_{\Sigma}H_{k}dA-\cos\theta\sin^k\theta W_k^{\mathbb{S}^n}(\widehat{\partial\Sigma})\right.  \\
&\quad \quad \quad \left.-\cos^{k-1}\theta\sum_{\ell=0}^{k-1}\frac{(-1)^{k+\ell}}{n-\ell}\binom{k}{\ell}\left[(n-k)\cos^2\theta+k-\ell\right]\tan^\ell\theta W_\ell^{\mathbb{S}^n}(\widehat{\partial\Sigma})\right\}.
\end{aligned}
\end{align}
For the free boundary case $\theta=\pi/2$, the definition \eqref{s2:quermassintegral-2} simplifies as
\begin{align}\label{s2:quermassintegral-3}
&W_{k+1,\frac{\pi}{2}}(\widehat{\Sigma})=\frac{1}{n+1}\left\{\int_{\Sigma}H_{k}dA-\frac{k}{n-k+1} W_{k-1}^{\mathbb{S}^n}(\widehat{\partial\Sigma})\right).
\end{align}
Here $dA$ is the area element and $H_k$ is the $k$th normalized mean curvature of the hypersurface $\Sigma$ in $\mathbb R^{n+1}$ respectively, and for a $k$-dimensional submanifold $M\subset \mathbb R^{n+1}$ (with or without boundary), $|M|$ denotes the $k$-dimensional Hausdorff measure of $M$. The quermassintegrals $W_{k}^{\mathbb{S}^n}(\widehat{\partial \Sigma})$ in $\mathbb{S}^n$ are defined by (see \cite{Sol06,ChenGLS22})
\begin{align*}
W_0^{\mathbb S^n}(\widehat{\partial \Sigma})=&|\widehat{\partial \Sigma}|, \\
W_1^{\mathbb S^n}(\widehat{\partial \Sigma})=&\frac{1}{n}|\partial \Sigma|,\\
W_{k+1}^{\mathbb S^n}(\widehat{\partial \Sigma})=&\frac{1}{n}\int_{\partial \Sigma} H_{k}^{\mathbb S^n} ds+\frac{k}{n-k+1}W^{\mathbb S^n}_{k-1}(\widehat{\partial \Sigma}), \quad 1\leq k\leq n-1,
\end{align*}
where $ds$ is the area element and $H_{k}^{\mathbb S^n}$ is the $k$th normalized mean curvature of the hypersurface $\partial \Sigma$ in $\mathbb S^n$ respectively.

The quermassintegrals $W_{k,\theta}(\widehat{\Sigma})$ defined in \eqref{s2:quermassintegral-0} - \eqref{s2:quermassintegral-2} are viewed as a natural counterparts to the quermassintegrals for smooth closed hypersurfaces in $\mathbb{R}^{n+1}$, as evidenced by the following nice variational formula:
\begin{prop}[\cite{Weng-Xia2022}]
	Let $\Sigma_t\subset\bar{\mathbb{B}}^{n+1}$ be a family of smooth, properly embedded hypersurfaces with $\theta$-capillary boundary on $\mathbb{S}^n$, given by $x(\cdot,t):\bar{\mathbb{B}}^n\rightarrow \Sigma_t\subset\bar{\mathbb{B}}^{n+1}$, such that
	\begin{equation*}
	\left(\partial_tx\right)^\perp=f\nu
	\end{equation*}
	for some normal speed $f$. Then there holds
	\begin{equation}\label{s2.dW}
	\frac{d}{dt}W_{k,\theta}(\widehat{\Sigma}_t)=\frac{n+1-k}{n+1}\int_{\Sigma_t}H_kfdA, \quad 0\leq k\leq n.
	\end{equation}	
\end{prop}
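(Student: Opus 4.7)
The plan is to differentiate the right-hand side of the definition \eqref{s2:quermassintegral-2} term by term and exploit a cancellation between the boundary contributions from varying $\int_{\Sigma_t} H_k\, dA$ and those from varying the spherical quermassintegrals $W_j^{\mathbb S^n}(\widehat{\partial\Sigma}_t)$. The first preliminary is the boundary kinematics: since $x(\cdot,t)$ sends $\partial\mathbb B^n$ into $\mathbb S^n$, the ambient velocity $\partial_t x$ is tangent to $\mathbb S^n$ on $\partial\Sigma_t$. Decomposing $\partial_t x = f\nu + a\mu + (\text{tangent to }\partial\Sigma)$ and enforcing $\langle \partial_t x, \-N\rangle = 0$ via \eqref{normal transform} yields $a = f\cot\t$, so that $\partial\Sigma_t$ moves as a hypersurface in $\mathbb S^n$ with outward normal speed $\langle \partial_t x, \-\nu\rangle = f/\sin\t$. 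Any purely tangential components along $\partial\Sigma$ only reparametrize and do not affect \eqref{s2.dW}.

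For $k=0$ the classical first variation of enclosed volume gives $\frac{d}{dt}|\widehat{\Sigma}_t| = \int_{\Sigma_t} f\, dA$, matching the claim with $H_0 = 1$. For $k=1$, combining
\begin{align*}
\frac{d}{dt}|\Sigma_t| &= \int_{\Sigma_t} Hf\, dA + \int_{\partial\Sigma_t} f\cot\t\, ds,\\
\frac{d}{dt}W_0^{\mathbb S^n}(\widehat{\partial\Sigma}_t) &= \int_{\partial\Sigma_t}\frac{f}{\sin\t}\, ds,
\end{align*}
the two boundary integrals cancel in the linear combination $|\Sigma_t| - \cos\t\, W_0^{\mathbb S^n}(\widehat{\partial\Sigma}_t)$, yielding \eqref{s2.dW} for $k=1$. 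This small computation already exhibits the mechanism in its simplest form.

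For $2 \leq k \leq n$ I would differentiate \eqref{s2:quermassintegral-2} using two families of identities. First, the variation of $\int_{\Sigma_t} H_{k-1}\, dA$ on a manifold with moving boundary produces the bulk term $(n-k+1)\int_{\Sigma_t} H_k f\, dA$ together with boundary integrals arising from the tangential motion $f\cot\t$ of $\partial\Sigma_t$ and from Stokes' theorem applied to the Newton-transformation terms in $\partial_t H_{k-1}$. Proposition \ref{boundary-h-property}(1), which forces $\mu$ to be a principal direction, eliminates off-diagonal contributions, and Proposition \ref{boundary-h-property}(2), substituting $h_{\a\b} = \sin\t\,\widehat h_{\a\b} - \cos\t\,\d_{\a\b}$, together with the binomial theorem applied to elementary symmetric polynomials of $\sin\t\,\widehat h - \cos\t\, I$, rewrites each boundary integral as a linear combination of $\int_{\partial\Sigma_t} H_j^{\mathbb S^n} f/\sin\t\, ds$ for $0 \leq j \leq k-1$. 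Second, the variations
\begin{equation*}
\frac{d}{dt} W_j^{\mathbb S^n}(\widehat{\partial\Sigma}_t) = \frac{n-j}{n}\int_{\partial\Sigma_t} H_j^{\mathbb S^n}\frac{f}{\sin\t}\, ds
\end{equation*}
contribute boundary terms of exactly the same type.

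The main obstacle will be the combinatorial cancellation: I must check that the coefficients $\binom{k}{\ell}$, signs $(-1)^{k+\ell}$, and weights $(n-k)\cos^2\t + k - \ell$ appearing in \eqref{s2:quermassintegral-2} are precisely those for which every boundary term from the hypersurface side is annihilated by a corresponding spherical variation, leaving only $\frac{n+1-k}{n+1}\int_{\Sigma_t} H_k f\, dA$. I would close this by induction on $k$, using the recursion in the definition of $W_{k+1}^{\mathbb S^n}(\widehat{\partial\Sigma})$ at the inductive step; the formula \eqref{s2:quermassintegral-2} appears to have been reverse-engineered precisely to make these cancellations work.
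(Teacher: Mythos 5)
The paper does not prove this proposition; it simply cites \cite{Weng-Xia2022}, so there is no internal argument to compare against. Your reconstruction is the natural one and matches the strategy of the cited reference: the boundary kinematics you derive (tangential conormal component $f\cot\theta$ on $\Sigma_t$, hence normal speed $f/\sin\theta$ for $\partial\Sigma_t$ viewed as a hypersurface in $\mathbb{S}^n$) is exactly what the paper records in \eqref{s3.T} and is correct; your explicit checks of the $k=0$ and $k=1$ cases are also correct, with the boundary integrals cancelling as you show.

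For $2\le k\le n$ your plan identifies the right ingredients --- divergence structure of the Newton tensors plus Stokes for the bulk term, Proposition~\ref{boundary-h-property} to pass from $h$ to $\widehat h$ via $h_{\alpha\beta}=\sin\theta\,\widehat h_{\alpha\beta}-\cos\theta\,\delta_{\alpha\beta}$ (using that $\mu$ is a principal direction), the binomial expansion of $\sigma_j(\sin\theta\,\widehat h-\cos\theta\,I)$, the spherical first-variation formula $\tfrac{d}{dt}W_j^{\mathbb S^n}=\tfrac{n-j}{n}\int_{\partial\Sigma_t}H_j^{\mathbb S^n}\tfrac{f}{\sin\theta}\,ds$, and induction on $k$ via the recursion defining $W_{k+1}^{\mathbb S^n}$. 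This is precisely how the coefficients in \eqref{s2:quermassintegral-2} were engineered. What you have not done is actually carry out the combinatorial bookkeeping showing that the coefficients $(-1)^{k+\ell}\binom{k}{\ell}[(n-k)\cos^2\theta+k-\ell]\tan^\ell\theta/(n-\ell)$ make every boundary term cancel; at present this is a plausible claim rather than a verified computation. Since the result is imported from the literature and your blueprint is faithful to it, that unverified step is the only real gap, and it is a computational rather than conceptual one.
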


Using the Minkowski formula \eqref{s1:Minkowski-identity} and the variational formula \eqref{s2.dW}, we have the following monotonicity of quermassintegrals along the flow \eqref{Guan-Li-flow}.
\begin{lem}\label{prop-mono}
Let $\Sigma_t$ be a smooth strictly convex solution of the flow \eqref{Guan-Li-flow}. Then along the flow, $W_{0,\theta}(\widehat{\Sigma_t})$ is preserved and $W_{k,\theta}(\widehat{\Sigma_t})$ is non-increasing in time for all $1\leq k\leq n-1$. Moreover,  $\frac{d}{dt}W_{k,\theta}(\widehat{\Sigma_t})=0$ if and only if $\Sigma_t$ is a spherical cap.
\end{lem}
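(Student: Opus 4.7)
The plan is to combine the variational formula \eqref{s2.dW} with the Minkowski identity \eqref{s1:Minkowski-identity} so that the two terms in the flow speed $f=n\langle x+\cos\theta\nu,e\rangle-H\langle X_e,\nu\rangle$ collapse into a single expression whose sign is controlled by the Newton--MacLaurin inequalities.

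First, for $k=0$, I would apply \eqref{s2.dW} with $k=0$ and $H_0=1$ to get
\[
\tfrac{d}{dt}W_{0,\theta}(\widehat{\Sigma_t})=\int_{\Sigma_t} f\,dA=\int_{\Sigma_t}\bigl(n\langle x+\cos\theta\nu,e\rangle-H\langle X_e,\nu\rangle\bigr)dA,
\]
and then apply \eqref{s1:Minkowski-identity} with $k=1$, which says $\int_{\Sigma_t}\langle x+\cos\theta\nu,e\rangle\,dA=\int_{\Sigma_t}H_1\langle X_e,\nu\rangle\,dA$. Since $H=nH_1$, the two terms cancel exactly, giving $\tfrac{d}{dt}W_{0,\theta}(\widehat{\Sigma_t})=0$.

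For $1\leq k\leq n-1$, I would apply \eqref{s2.dW} at level $k$ and then apply \eqref{s1:Minkowski-identity} at level $k+1$ to rewrite $n\int_{\Sigma_t}H_k\langle x+\cos\theta\nu,e\rangle\,dA=n\int_{\Sigma_t}H_{k+1}\langle X_e,\nu\rangle\,dA$. This gives
\[
\tfrac{d}{dt}W_{k,\theta}(\widehat{\Sigma_t})=\tfrac{n(n+1-k)}{n+1}\int_{\Sigma_t}(H_{k+1}-H_1H_k)\langle X_e,\nu\rangle\,dA.
\]
By the Newton--MacLaurin inequality on a convex hypersurface one has $H_1H_k\geq H_{k+1}$ pointwise, so the integrand is non-positive provided $\langle X_e,\nu\rangle\geq 0$. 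This last nonnegativity is exactly the star-shapedness with respect to $e$ that is afforded by Theorem \ref{thm-con} (and which is preserved along the flow by the convexity-preservation results of \S \ref{sec:5}), so the monotonicity follows.

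For the equality case, if $\tfrac{d}{dt}W_{k,\theta}(\widehat{\Sigma_t})=0$ at some instant, then since $\langle X_e,\nu\rangle>0$ on the strictly convex hypersurface $\Sigma_t$, the Newton--MacLaurin equality $H_1 H_k=H_{k+1}$ must hold everywhere, forcing $\kappa_1=\cdots=\kappa_n$ at every point. Thus $\Sigma_t$ is totally umbilic, and together with the $\theta$-capillary boundary condition the classification of umbilic capillary hypersurfaces in $\bar{\mathbb B}^{n+1}$ forces $\Sigma_t$ to be a spherical cap (a flat disk being excluded by strict convexity). The only subtle point is justifying the strict positivity of $\langle X_e,\nu\rangle$ used in the equality analysis; this should follow from the preservation properties of the flow and the assumption that $\Sigma_t$ is strictly convex in the statement of the lemma.
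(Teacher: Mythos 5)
Your proposal reproduces the paper's argument essentially verbatim: apply the variational formula \eqref{s2.dW}, substitute the Minkowski identity \eqref{s1:Minkowski-identity} at level $k+1$ to fold the two pieces of the speed into $(H_{k+1}-H_1H_k)\langle X_e,\nu\rangle$, invoke Newton--MacLaurin, and conclude with the rigidity of the equality case. The only point worth flagging is the justification of $\langle X_e,\nu\rangle>0$: you cite Theorem \ref{thm-con}, but that theorem's proof relies on this very lemma, so the reference is circular as stated. The correct source is Proposition \ref{s2:prop-estimate-convex}, which gives $\langle X_e,\nu\rangle\geq\delta_2>0$ directly from the strict convexity hypothesis of the lemma; with that citation replaced, your proof matches the paper's.
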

\begin{proof}
Firstly, by the variational formula \eqref{s2.dW} and the Minkowski formula \eqref{s1:Minkowski-identity} for $k=1$, we have
	\begin{align*}
	\frac{d}{dt}W_{0,\theta}(\widehat{\Sigma_t})=\frac{n}{n+1}\int_{\Sigma_t}\Big(\langle x+\cos\theta\nu,e\rangle-H_1\langle X_e,\nu\rangle \Big)dA=0.
	\end{align*}
As the solution $\Sigma_t$ is strictly convex for $t>0$, using the Newton's inequality $H_{k+1}\leq H_kH_1$ and the Minkowski formula \eqref{s1:Minkowski-identity} again, we obtain
	\begin{align}\label{s5:monotonicity}
	\frac{d}{dt}W_{k,\theta}(\widehat{\Sigma_t})=&\frac{n(n+1-k)}{n+1}\int_{\Sigma_t}\Big(H_k\langle x+\cos\theta\nu,e\rangle-\langle X_e,\nu\rangle H_1H_k\Big)dA \nonumber\\
	=&\frac{n(n+1-k)}{n+1}\int_{\Sigma_t}\left(H_{k+1}-H_1H_k\right)\langle X_e,\nu\rangle dA \nonumber\\
	\leq&0,
	\end{align}
for $k=1,\cdots,n-1$. If equality holds in \eqref{s5:monotonicity}, then $\Sigma_t$ are umbilic and hence it is a spherical cap with $\t$-capillary boundary.
\end{proof}

\subsection{Estimates on convex capillary hypersurfaces}
In this subsection, we collect some estimates on strictly convex hypersurfaces with $\t$-capillary boundary in the unit ball, which were obtained by Weng and Xia \cite{Weng-Xia2022}.
\begin{prop}\cite[Prop. 2.15]{Weng-Xia2022}
Let $\Sigma\subset\bar{\mathbb{B}}^{n+1}$ be a strictly convex hypersurface with $\theta$-capillary boundary for $\theta\in (0,\frac{\pi}{2}]$. Then for any $p\in \Sigma$, $\Sigma$ lies on one side of $T_p\Sigma$.
\end{prop}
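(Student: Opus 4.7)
My strategy is to upgrade the statement to the stronger claim that the enclosed region $\widehat{\Sigma}\subset\overline{\mathbb{B}}^{n+1}$ is a convex subset of $\mathbb{R}^{n+1}$. Granted this, the proposition is immediate: for any $p\in\Sigma$, the positive-definite second fundamental form of $\Sigma$ at $p$ makes $T_p\Sigma$ a supporting hyperplane of the convex body $\widehat{\Sigma}$ at the boundary point $p$ (at edge points $p\in\partial\Sigma$, the direction $\nu(p)$ lies in the normal cone generated by $\nu$ and $\bar N$, so $T_p\Sigma$ is still a supporting hyperplane). Consequently $\widehat{\Sigma}$, and in particular $\Sigma\subset\partial\widehat{\Sigma}$, lies in the closed half-space $\{q\in\mathbb{R}^{n+1}:\langle q-p,\nu(p)\rangle\leq 0\}$.

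To prove convexity of $\widehat{\Sigma}$, I verify \emph{local} convexity at every boundary point. The boundary decomposes as $\partial\widehat{\Sigma}=\Sigma\cup\widehat{\partial\Sigma}$. On $\Sigma$, the outward normal of $\widehat{\Sigma}$ is $\nu$ and the second fundamental form $h_{ij}$ is positive definite by hypothesis. On $\widehat{\partial\Sigma}\subset\mathbb{S}^n$, the outward normal of $\widehat{\Sigma}$ is the radial vector $\bar N$, and an elementary computation shows that the second fundamental form of $\mathbb{S}^n$ with respect to $\bar N$ equals the induced metric, hence is positive definite. Thus both smooth boundary pieces are locally convex with respect to the outward normal of $\widehat{\Sigma}$.

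The crucial check is at the edge $\partial\Sigma$ where the two smooth pieces meet. At any $q\in\partial\Sigma$, the unit tangent vectors pointing from $q$ into the interiors of $\Sigma$ and $\widehat{\partial\Sigma}$ are $-\mu$ and $-\bar\nu$, respectively. Using the relations in \eqref{normal transform},
\[
\langle -\mu,-\bar\nu\rangle=\langle\mu,\cos\theta\,\mu+\sin\theta\,\nu\rangle=\cos\theta,
\]
so the interior dihedral angle of $\widehat{\Sigma}$ at the edge equals $\theta$. The capillary hypothesis $\theta\in(0,\pi/2]$ yields $\theta<\pi$, so the edge is a convex corner. Combining this with the local convexity of the smooth boundary pieces, $\widehat{\Sigma}$ is locally convex at every point of its boundary. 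Since $\widehat{\Sigma}$ is a compact connected subset of $\mathbb{R}^{n+1}$, a Tietze--Nakajima type theorem (closed connected locally convex subsets of Euclidean space are globally convex) then gives the desired global convexity, and the supporting-hyperplane argument completes the proof.

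\textbf{Main obstacle.} The most delicate step is the passage from local to global convexity. The calculation at smooth points is routine, and the edge computation is short, but invoking Tietze--Nakajima (or producing a self-contained substitute) is the real content. The hypothesis $\theta\leq\pi/2$ enters precisely here: it forces the interior dihedral angle $\theta$ at the edge to be $\leq\pi/2<\pi$, i.e., a convex corner rather than a re-entrant one. In the free-boundary case $\theta=\pi/2$ one can bypass Tietze--Nakajima via Schwarz reflection across $\mathbb{S}^n$, obtaining a closed convex hypersurface for which the one-sidedness of tangent planes is classical; but no such reflection is available for general $\theta\in(0,\pi/2)$, which is why the dihedral-angle computation together with a local-to-global convexity principle is unavoidable in the general setting.
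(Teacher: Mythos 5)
This proposition is cited in the paper from \cite[Prop.~2.15]{Weng-Xia2022} and no proof is given here, so there is no ``paper's own proof'' to compare against; I will evaluate your argument on its merits.

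Your plan is sound and the proof is essentially correct. Establishing the stronger fact that $\widehat{\Sigma}$ is a convex body does reduce the proposition to the supporting-hyperplane property, and the decomposition $\partial\widehat{\Sigma}=\Sigma\cup\widehat{\partial\Sigma}$ together with the verification of local convexity on each smooth piece is fine. The edge computation $\langle -\mu,-\bar\nu\rangle=\cos\theta$ is correct, and the inward-pointing directions $-\mu$ and $-\bar\nu$ do bound the wedge containing $\widehat{\Sigma}$ (one can check this directly from $\nu$ and $\bar N$ being the two outward normals and $\bar N=\sin\theta\,\mu-\cos\theta\,\nu$). The passage from local to global convexity via the Tietze--Nakajima (or Sacksteder--Straus--Valentine) theorem is a legitimate, if somewhat heavy, appeal. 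For interior points of $\Sigma$, $T_p\Sigma$ is then the unique supporting hyperplane of $\widehat{\Sigma}$ at $p$; for $p\in\partial\Sigma$, $\nu(p)$ lies in the normal cone of the convex body at $p$, so $T_p\Sigma$ is still a supporting hyperplane.

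One remark on your ``main obstacle'' paragraph: you attribute the hypothesis $\theta\le\pi/2$ to the dihedral-angle step, claiming it rules out a re-entrant corner. This is misplaced --- the interior dihedral angle equals $\theta$, which is strictly less than $\pi$ for \emph{any} contact angle $\theta\in(0,\pi)$, so the corner is automatically convex in the whole range. What $\theta\le\pi/2$ actually buys is earlier in the setup: by Proposition~\ref{boundary-h-property}(2), convexity of $\Sigma$ gives $\widehat{h}_{\alpha\beta}\ge\cot\theta\,\delta_{\alpha\beta}$, and $\cot\theta\ge 0$ (hence $\partial\Sigma$ convex in $\mathbb{S}^n$ and $\widehat{\partial\Sigma}$ a well-defined geodesically convex cap) only when $\theta\le\pi/2$. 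That is the place the hypothesis enters your argument, not the corner estimate. This does not affect the correctness of the proof for the stated range, only the explanation of where the hypothesis is used.

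A smaller stylistic point: at an edge point the local convexity can be seen without invoking the dihedral angle at all, since $\widehat{\Sigma}$ is there the intersection of the subgraph region of $\Sigma$ and the subgraph region of $\mathbb{S}^n$, each of which is locally convex because of the positive-definite second fundamental form; an intersection of convex sets is convex. This bypasses the angle discussion entirely, though what you wrote is also correct.
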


Let $\Sigma\subset\bar{\mathbb{B}}^{n+1}$ be a strictly convex hypersurface with $\theta$-capillary boundary.  Then it is proved in \cite[Prop. 2.16]{Weng-Xia2022} that there exists a point $e\in \operatorname{int}(\widehat{\partial\Sigma})$ and a constant $0<\d_0<1-\cos\t$ depending only on $\Sigma$ such that
		\begin{align}
		& \langle x,e\rangle \geq \cos \t+\d_0, \label{s2.est1}\\
		& \langle x,\nu\rangle \leq -\cos\t.\label{s2.est2}
		\end{align}
Recall that the foliations $C_{\t,s}(e)$, $s\geq 0$ are spherical caps of radius $s$ around $e$ with $\t$-capillary boundary such that $C_{\t,\infty}(e)$ is the flat ball around $e$ with $\t$-capillary boundary. Then there exists some $0<R_1<R_2<\infty$ such that
\begin{align*}
\Sigma \subset \widehat{C_{\t,R_2}(e)} \backslash \widehat{C_{\t,R_1}(e)}.
\end{align*}

\begin{prop}[cf. Prop.2.16 of \cite{Weng-Xia2022}]\label{s2:prop-estimate-convex}
	Let $\Sigma\subset\bar{\mathbb{B}}^{n+1}$ be a strictly convex hypersurface with $\theta$-capillary boundary such that $\Sigma \subset \widehat{C_{\t,R_2}(e)} \backslash \widehat{C_{\t,R_1}(e)}$. Then the following estimates hold:
\begin{enumerate}[$(1)$]
  \item $\langle x-e,\nu\rangle \geq \d_1>0$;
  \item $\langle X_e,\nu\rangle \geq \d_2>0$;
  \item $\cos\t+\d_0\leq \langle x,e \rangle \leq 1-\d_3$;
  \item $\langle e,\nu\rangle\leq -\d_4$.
\end{enumerate}
	Here $\d_i$, $i=1,2,3,4$ are positive constants depending only on $R_1$, $\t$, and $\d_0$ is a positive constant depending only on $R_2$, $\t$ such that $\cos\t+\d_0\leq 1-\d_3$.
\end{prop}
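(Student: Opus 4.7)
The unifying geometric input is the sandwiching $\widehat{C_{\theta,R_1}(e)}\subset\widehat\Sigma\subset\widehat{C_{\theta,R_2}(e)}$, so the required bounds should come from comparison with the fixed inner convex body $K:=\widehat{C_{\theta,R_1}(e)}=B(c_1 e,R_1)\cap\bar{\mathbb{B}}^{n+1}$, where $c_1:=\sqrt{R_1^2+2R_1\cos\theta+1}$. My plan is to prove (1) first; then (3)-upper by a separation-from-$e$ argument; then (4), immediate from (1) and \eqref{s2.est2}; and finally (2) by an algebraic decomposition of $X_e$. The lower bound in (3) is \eqref{s2.est1}.

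For estimate (1), since $K$ and $\widehat\Sigma$ are both convex with $K\subset\widehat\Sigma$, at every $p\in\Sigma$ the tangent hyperplane $T_p\Sigma$ is a supporting hyperplane of $K$, and hence
\[
\langle x(p)-e,\nu(p)\rangle\;\geq\;h_K(\nu(p))-\langle e,\nu(p)\rangle.
\]
I would analyze the right-hand side by case-splitting on where the supporting hyperplane of $K$ with outward normal $\nu(p)$ contacts $\partial K$. In the cap-contact case, $h_K(\nu)=c_1\langle e,\nu\rangle+R_1$ gives the clean lower bound $(c_1-1)\langle e,\nu\rangle+R_1\geq R_1+1-c_1>0$, depending only on $R_1,\theta$. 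The putative $\mathbb{S}^n$-contact case and the corner case are both excluded uniformly by \eqref{s2.est2}: the former would force $\langle x(p),\nu(p)\rangle\geq h_K(\nu(p))=1$, contradicting $\langle x,\nu\rangle\leq-\cos\theta$; and in the corner case, writing $\nu(p)=sv_1+tv_2$ with $v_1$ the cap-normal and $v_2=q$ the $\mathbb{S}^n$-normal at the corner point $q$, the constraint $h_K(\nu)\leq -\cos\theta$ reduces to $t\leq (s-1)\cos\theta$, which combined with $\langle v_1,v_2\rangle=-\cos\theta$, the unit-norm condition $s^2-2st\cos\theta+t^2=1$, and $s,t\geq 0$ forces $(s,t)=(1,0)$, i.e., $\nu(p)=v_1$, collapsing to the cap-contact case. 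This yields $\delta_1=R_1+1-c_1$.

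For estimate (3), the upper bound uses that $K$ contains a half-neighborhood of $e$ in $\bar{\mathbb{B}}^{n+1}$ of definite radius $\rho_1=R_1+1-c_1=\delta_1$, so the disjointness $\Sigma\cap K=\emptyset$ gives $|x-e|\geq\rho_1$; combining with $|x|\leq 1$ and $|x-e|^2=|x|^2+1-2\langle x,e\rangle$ yields $\langle x,e\rangle\leq 1-\rho_1^2/2=:1-\delta_3$. For (4), write $\langle e,\nu\rangle=\langle x,\nu\rangle-\langle x-e,\nu\rangle\leq-\cos\theta-\delta_1\leq-\delta_1$, so one may take $\delta_4=\delta_1$. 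For (2), I would first establish the algebraic identity
\[
X_e\;=\;\langle x,e\rangle\,(x-e)\;-\;\tfrac12\,|x-e|^2\,e,
\]
verified by expanding both sides from \eqref{eq-Xe}. Taking the inner product with $\nu$ and invoking (1), (3)-lower, and (4) gives
\[
\langle X_e,\nu\rangle=\langle x,e\rangle\langle x-e,\nu\rangle-\tfrac12|x-e|^2\langle e,\nu\rangle\;\geq\;(\cos\theta+\delta_0)\delta_1>0,
\]
the second summand being non-negative by (4). The main technical obstacle is the corner-case analysis in step (1); once the collapse to the cap-contact regime is handled via \eqref{s2.est2}, the remaining estimates are bookkeeping.
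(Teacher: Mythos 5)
Your proposal takes a genuinely different route from the paper on every subestimate except the lower bound of (3), and the arguments are essentially sound; here is how the two approaches compare and one place where your constants fall short of the statement.

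For (1), the paper takes the flat disk $S$ with $\partial S = \partial C_{\t,R_1}(e)$, notes $S\subset\widehat\Sigma$ lies on the $-\nu$ side of $T_x\Sigma$, and bounds $\langle x-e,\nu\rangle$ below by the Euclidean distance $d^{\R}(e,S)=1-\frac{1+R_1\cos\t}{c_1}$; this is short and avoids any case analysis. Your argument via the support function of $K=\widehat{C_{\t,R_1}(e)}$ is heavier machinery but also works, and in fact your constant $\d_1=R_1+1-c_1$ is larger. Your corner-case collapse is correct, though you assert rather than prove the final implication: fixing $s\ge 1$, the quantity $s^2-2st\cos\t+t^2$ is decreasing in $t$ on $[0,(s-1)\cos\t]$ and evaluates to $s^2\sin^2\t+\cos^2\t>1$ at the right endpoint when $s>1$, so the unit-norm constraint forces $s=1,t=0$; that sentence should be supplied. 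For (3) upper and (4) your shortcuts (disjointness from a ball around $e$, and the identity $\langle e,\nu\rangle=\langle x,\nu\rangle-\langle x-e,\nu\rangle$ together with \eqref{s2.est2}) are clean and arguably simpler than the paper's comparison to the cap maximum and its Gauss-map containment argument, respectively. For (3) lower you only cite \eqref{s2.est1}, but that $\d_0$ is a priori $\Sigma$-dependent, whereas the proposition asserts $\d_0=\d_0(R_2,\t)$; the paper re-derives this quantitatively from $\Sigma\subset\widehat{C_{\t,R_2}(e)}$ and your proposal should do the same.

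The one genuine mismatch is in (2). Your bound $\langle X_e,\nu\rangle\ge(\cos\t+\d_0)\d_1$ uses the lower bound on $\langle x,e\rangle$ and hence depends on $R_2$, but the proposition requires $\d_2$ to depend only on $R_1,\t$. The paper avoids this by writing
\begin{align*}
\langle X_e,\nu\rangle=\tfrac{1+|x|^2}{2}\langle x-e,\nu\rangle-\langle x,\nu\rangle\tfrac{|x-e|^2}{2}\ge\tfrac{1}{2}\d_1,
\end{align*}
using only \eqref{s2.est2}. Your decomposition can be rescued: the second summand is $-\tfrac12|x-e|^2\langle e,\nu\rangle\ge\tfrac12\d_1^2\d_4=\tfrac12\d_1^3$ since $|x-e|\ge|\langle x-e,\nu\rangle|\ge\d_1$ and $\d_4=\d_1$, while the first is nonnegative because $\langle x,e\rangle\ge\cos\t\ge0$; this yields $\d_2=\tfrac12\d_1^3$ depending only on $R_1,\t$, as required.
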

\begin{proof}
The estimates are proved in Prop.2.16 of \cite{Weng-Xia2022}. Here we provide a proof by clarifying that   the constants  $\delta_i,i=0,1,\cdots,4$ can be chosen explicitly depending only on $\theta, R_1$ and $R_2$. These estimates will be used in the proof of preserving of the strict convexity of $\Sigma_t$ in Theorem \ref{s4:thm-strict-convex}.
	Let $S$ be a flat ball in $\-{\mathbb B}^{n+1}$ such that $\partial S=\partial C_{\t,R_1}(e)$. Since $S\subset \widehat{C_{\t,R_1}(e)}\subset\widehat{\Sigma}$, we know that $S$ lies in the upper half-space determined by $T_x\Sigma$ in the direction of $-\nu$, see Figure \ref{fig3}.
	\begin{figure}[htbp]
		\begin{tikzpicture}[scale=1.4]
		\draw[thick] (2.4,0) arc (0:180:2.4);
		\draw[thick] (-2.25,0.8) arc (-115:-65:5.35);
		\filldraw[thick](0,2.4) circle(0.03);
		\node at (0.2,2.6) {$e$};
		\draw[thick] (-0.923,2.215) arc (199:341:1);	
		
		\draw[red,thick](-0.923,2.2)--(0.923,2.2);
		\filldraw[thick](-0.923,2.2) circle(0.03);
		\node at (0.3,2.0) {$S$};
		\node at (1.2,1.5) {$C_{\theta,R_1}(e)$};
		\node at (1.6,0.2) {$C_{\theta,R_2}(e)$};
		\draw[thick](-3.0,0)--(3.0,0);
		\filldraw[thick](0,0) circle(0.03);
		\node at (0.2,-0.2) {$O$};
		\draw[red,thick](-1.7,1.7) [rounded corners=10pt] -- (-1.3,1.3)
		[rounded corners=20pt]--(0,0.7)
		[rounded corners=20pt]--(1.5,0.6)
		[rounded corners=20pt]--(2.2,1);
		
		\draw[blue](-2.3,1.9)--(0.3,0.38);
		\draw[blue](-2,2.8)--(0.8,1.2);
		\node at (-2.3,2.1) {$T_x\Sigma$};
		\filldraw[thick](-0.32,1.85) circle(0.03);
		\draw[dashed](0,2.4)--(-0.32,1.85);
		\draw[dashed](-0.32,1.85)--(-0.8,1);
		\filldraw[thick](-0.8,1) circle(0.03);
		\filldraw[thick](-1.05,1.18) circle(0.03);
		
		\node at (-0.85,1.38) {$x$};
		
		
		\draw[->][thick](-1.05,1.18)--(-1.3,0.7);
		\node at (-1.1,0.7) {$\nu$};
		
		\node at (1.5,0.9) {$\Sigma$};
		\end{tikzpicture}
		\caption{\ Capillary hypersurface between two spherical caps}\label{fig3}
	\end{figure}
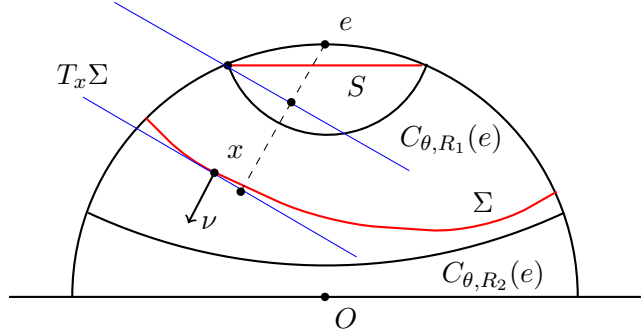
	Let $d^{\mathbb R}$ be the distance in $\mathbb R^{n+1}$. Since $\langle x-e,\nu\rangle$ measures the distance from $e$ to $T_x \Sigma$, which can be bounded from below by the distance from $e$ to $S$, i.e., $\langle x-e,\nu\rangle \geq d^{\mathbb R}(e,S)$. We calculate $d^{\mathbb R}(e,S)$ explicitly as follows:
	\begin{align*}
	1-(1-d^{\mathbb R}(e,S))^2=R_1^2-(\sqrt{R_1^2+2R_1\cos\t+1}-1+d^{\mathbb R}(e,S))^2,
	\end{align*}
	which gives
	$$
	d^{\mathbb R}(e,S)=1-\frac{1+R_1\cos\t}{\sqrt{R_1^2+2R_1\cos\t+1}}=: \d_1(R_1,\t)>0.
	$$
	Then we get $\langle x-e,\nu \rangle \geq \d_1$. It follows that
	\begin{align*}
	\langle X_e,\nu \rangle=\frac{1+|x|^2}{2}\langle x-e,\nu \rangle-\langle x,\nu\rangle\frac{|e-x|^2}{2}
	\geq \frac{1}{2}\d_1=:\d_2(R_1,\t),
	\end{align*}	
	where we used $\langle x,\nu\rangle \leq -\cos\t\leq 0$ by \eqref{s2.est2}.
	
	Note that $\Sigma \subset \widehat{C_{\t,R_2}(e)}$, i.e., $\Sigma$ lies above  $C_{\t,R_2}(e)$ in the direction of $e$. Then we get
	\begin{align*}
	\min_{\Sigma}\langle x,e\rangle \geq &\min_{C_{\t,R_2}(e)}\langle x,e\rangle=\sqrt{R_2^2+2R_2\cos\t+1}-R_2\\
	=&\cos\t+\frac{\sin^2\t}{\sqrt{R_2^2+2R_2\cos\t+1}+R_2+\cos\t}\\
	=:&\cos\t+\d_0(R_2,\t).
	\end{align*}
Similarly,
\begin{align*}
  \max_\Sigma \langle x,e\rangle \leq  & \max_{C_{\theta,R_1}(e)}\langle x,e\rangle =\frac{1+R_1\cos\theta}{\sqrt{R_1^2+2R_1\cos\theta+1}} \\
   =:& 1-\delta_3(R_1,\theta).
\end{align*}

	Since $\partial C_{\t,R_1}(e)$ is enclosed by $\partial \Sigma$ in $\mathbb S^n$, we know that $\nu|_{\partial\Sigma}$ is strictly contained in $B_{\frac{\pi}{2}-\b}(-e)$ in $\mathbb S^n$, where $\b$ is given by
	\begin{equation*}
	\cos\b=\frac{1+R_1\cos\t}{\sqrt{R_1^2+2R_1\cos\t+1}}.
	\end{equation*}
	The convexity of $\Sigma$ implies that $\nu|_{\Sigma}$ also lies in $B_{\frac{\pi}{2}-\b}(-e)$ and hence
	$$
	\langle \nu,e\rangle \leq -\sin\b =-\frac{R_1\sin\t}{\sqrt{R_1^2+2R_1\cos\t+1}}=-
	\d_4(R_1,\t).
	$$
\end{proof}

\section{Evolution equations}\label{sec:3}
In this section, we derive the evolution equations along the flow \eqref{Guan-Li-flow}. Denote the normal speed of the flow by
\begin{equation}\label{s3.f}
  f=n \langle x+\cos\theta\nu,e\rangle-H\langle X_e,\nu\rangle,
\end{equation}
 where we choose $e\in\text{int}(\widehat{\partial\Sigma_0})$ as in Lemma \ref{s2:prop-weak-convex-rigidity}.  As in \cite[\S 2.4]{Weng-Xia2022}, the $\theta$-capillary boundary condition along the flow requires the tangential component $V=(\partial_tx)^\top\in T\Sigma_t$ must satisfy $V\big|_{\partial\Sigma_t}=f\cot\theta\mu+\tilde{V}$,  where $\tilde{V}\in T\left(\partial\Sigma_t\right)$. Hence up to a time-dependent diffeomorphism of $\partial\mathbb{B}^n$, we can assume that $\~V=0$ and hence
\begin{equation}\label{s3.T}
V\big|_{\partial\Sigma_t}=f\cot\theta\mu.
\end{equation}
The $\theta$-capillary boundary condition also ensures that (see \cite[Prop. 2.12]{Weng-Xia2022})
\begin{equation}\label{s2:bdry-condition-general}
\nabla_\mu f=\left(\frac{1}{\sin\theta}+\cot\theta h_{\mu\mu}\right)f\ \ \text{along}\ \partial\Sigma_t.
\end{equation}

So we rewrite the flow \eqref{Guan-Li-flow} equivalently as the following form:
\begin{equation}\label{s3:GL-flow}
\left\{\begin{aligned}
\partial_t x&=f\nu+V \qquad &\text{in $\bar{\mathbb{B}}^n \times[0,T)$},\\
\langle\bar{N}\circ x,\nu\rangle&=-\cos\theta &\text{on  $\partial\bar{\mathbb{B}}^n \times[0,T)$},\\
x(\cdot,0)&=x_0(\cdot) &\text{on $\bar{\mathbb{B}}^n$},
\end{aligned}\right.
\end{equation}
where $f$ is given by \eqref{s3.f} and the tangential component $V=(\partial_tx)^\top\in T\Sigma_t$ satisfies \eqref{s3.T} on the boundary $\partial\Sigma_t$. By Proposition \ref{s2:prop-estimate-convex}, we have $\langle X_e,\nu\rangle\geq \d_2>0$ on $\Sigma_0$. Then the  short time existence of the flow \eqref{s3:GL-flow} follows from rewriting it as a strictly parabolic scalar equation with oblique boundary condition, using M\"obius coordinate transformation (see \cite[\S 3]{WW2020} for details).

It is easy to check that the spherical cap $C_{\t,r}(e)$ of radius $r$ satisfies
\begin{align*}
\langle x,e\rangle+\cos\t\langle \nu,e\rangle=\frac{1}{r}\langle X_e,\nu \rangle,
\end{align*}
and hence it is stationary along the flow \eqref{s3:GL-flow}. Using the spherical caps as barriers gives the following $C^0$ estimate.
\begin{lem}\label{s3:lem-barrier}
	Assume that the initial hypersurface $\Sigma$ satisfies
	\begin{equation*}
	\Sigma\subset\widehat{C_{\t,R_2}(e)}\backslash\widehat{C_{\t,R_1}(e)},
	\end{equation*}
	for some $0<R_1<R_2<\infty$. Then this property also holds for the solution $\Sigma_t$ along the flow \eqref{s3:GL-flow} for $t>0$.
\end{lem}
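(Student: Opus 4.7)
The plan is to apply the avoidance principle, exploiting the fact recorded just before the lemma that for each $r>0$ the spherical cap $C_{\t,r}(e)$ is a stationary solution of the flow \eqref{s3:GL-flow}: on $C_{\t,r}(e)$ the identity $\langle x,e\rangle+\cos\t\langle \nu,e\rangle=\frac{1}{r}\langle X_e,\nu\rangle$ combines with $H=n/r$ to force the speed $f$ of \eqref{s3.f} to vanish identically. The one-parameter family $\{C_{\t,r}(e)\}_{r>0}$ therefore supplies a ready-made family of barriers, and the lemma will follow by comparing $\Sigma_t$ against $C_{\t,R_2}(e)$ from outside and against $C_{\t,R_1}(e)$ from inside.

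My main route is to reuse the parabolic scalar reduction already employed for short-time existence. Following the M\"obius coordinate transformation of \cite[\S 3]{WW2020}, $\Sigma_t$ can be written as a radial graph $u(\xi,t)$ over a fixed reference capillary hypersurface, under which the flow \eqref{s3:GL-flow} becomes a strictly parabolic scalar equation with an oblique, Robin-type boundary condition on $\partial\mathbb{B}^n$ encoding the capillary angle $\t$. Under this representation each cap $C_{\t,r}(e)$ corresponds to a constant-in-time graph $u\equiv u_r$, and the hypothesis $\widehat{C_{\t,R_1}(e)}\subset \widehat{\Sigma_0}\subset \widehat{C_{\t,R_2}(e)}$ becomes $u_{R_1}\le u(\cdot,0)\le u_{R_2}$. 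The standard comparison principle for parabolic equations with oblique boundary conditions then yields $u_{R_1}\le u(\cdot,t)\le u_{R_2}$ throughout the existence interval, which is exactly $\Sigma_t\subset \widehat{C_{\t,R_2}(e)}\setminus \widehat{C_{\t,R_1}(e)}$.

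I expect the main obstacle to be the boundary behaviour: one must verify that the Robin-type operator coming from the capillary condition $\langle \bar{N}\circ x,\nu\rangle=-\cos\t$ is strictly oblique with the correct sign so that the Hopf boundary lemma applies at $\partial\mathbb{B}^n$. If a more geometric argument is preferred, one can run a first-touching analysis at the hypothetical first time $t_0$ when $\Sigma_t$ contacts $C_{\t,R_2}(e)$ at some point $p_0$. At an interior $p_0$ the outward normals and tangent planes of $\Sigma_{t_0}$ and $C_{\t,R_2}(e)$ agree, and $\Sigma_{t_0}$ lies locally inside $\widehat{C_{\t,R_2}(e)}$, so its principal curvatures dominate those of $C_{\t,R_2}(e)$; together with the agreement of $\langle x+\cos\t\nu,e\rangle$ and $\langle X_e,\nu\rangle$ and the positivity $\langle X_e,\nu\rangle>0$ supplied by Proposition \ref{s2:prop-estimate-convex}, this forces $f_{\Sigma_{t_0}}(p_0)\le 0=f_{C_{\t,R_2}(e)}(p_0)$, and the strong maximum principle for the difference function rules out a genuine crossing. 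If instead $p_0\in\partial\Sigma_{t_0}\cap\mathbb{S}^n$, both surfaces meet $\mathbb{S}^n$ at angle $\t$, so their tangent planes at $p_0$ coincide, and the Hopf lemma applied through the boundary identity \eqref{s2:bdry-condition-general} excludes a boundary first-touch. The lower barrier $C_{\t,R_1}(e)$ is handled by the symmetric argument.
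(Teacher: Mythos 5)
Your proposal follows exactly the barrier-comparison strategy the paper indicates but does not spell out: the spherical caps $C_{\theta,r}(e)$ are stationary for \eqref{s3:GL-flow}, and the flow reduced to a scalar parabolic equation with oblique (capillary) boundary condition via the M\"obius transformation obeys the standard comparison principle, preserving the bounds $u_{R_1}\le u\le u_{R_2}$. Both your primary scalar-PDE argument and the alternative first-touch/Hopf analysis are correct ways of making rigorous the paper's one-line assertion ``using the spherical caps as barriers.''
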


Let $g_{ij}$, $h_{ij}$ and $H=g^{ij}h_{ij}$ be the induced metric, the second fundamental form and the mean curvature of the flow hypersurface $\Sigma_t$. We recall the following general evolution equations:
\begin{prop}\label{s3:lem-evol-general-speed}\cite[Proposition 2.11]{Weng-Xia2022}
Along a general flow satisfying
\begin{equation*}
  \partial_tx=f\nu+V,
\end{equation*}
where $V\in T\Sigma_t$, we have
 \begin{align}
 \partial_t g_{ij}=&2fh_{ij}+\nabla_i V_j+\nabla_jV_i,\label{s3.evl-g}\\
 \partial_t h_{ij}=&-\nabla_i\nabla_jf+f\left(h^2\right)_{ij}+\nabla_Vh_{ij}+h_j^k\nabla_iV_k+h_i^k\nabla_jV_k,\label{s3.evl-h}\\
 \partial_t H=&-\Delta f-|A|^2f+\nabla_VH,\label{s3.evl-H}
 \end{align}
 where $(h^2)_{ij}=h_i^kh_{kj}$ and $|A|^2=g^{ij}h_i^kh_{kj}$.
\end{prop}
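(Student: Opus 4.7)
The plan is to derive each of the three evolution equations by differentiating the corresponding geometric quantity, written in terms of the embedding $x$, and then organizing the resulting terms using the Gauss--Weingarten equations together with the identification of the Lie derivative $\mathcal L_V$ acting on tensors. Throughout, I would use the conventions $\langle\partial_i\nu,\partial_jx\rangle=h_{ij}$, $\partial_i\partial_jx=\Gamma_{ij}^k\partial_kx-h_{ij}\nu$, and the standard fact $(\mathcal L_V g)_{ij}=\nabla_iV_j+\nabla_jV_i$.

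For \eqref{s3.evl-g}, I would start from $g_{ij}=\langle\partial_i x,\partial_jx\rangle$ and differentiate in $t$. Substituting $\partial_tx=f\nu+V$ and using $\langle\partial_i\nu,\partial_jx\rangle=h_{ij}$, the normal part contributes $2fh_{ij}$. The tangential part gives $\langle\partial_iV,\partial_jx\rangle+\langle\partial_jV,\partial_ix\rangle$, which is precisely $(\mathcal L_Vg)_{ij}=\nabla_iV_j+\nabla_jV_i$ after writing $V=V^k\partial_kx$ and moving Christoffel terms to the covariant derivative. This is the easiest step.

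For \eqref{s3.evl-h}, the first step is to compute $\partial_t\nu$. Since $|\nu|^2=1$ and $\langle\nu,\partial_ix\rangle=0$, one finds $\partial_t\nu$ is tangential with $\langle\partial_t\nu,\partial_ix\rangle=-\nabla_if+h_i{}^kV_k$, hence $\partial_t\nu=(-\nabla^jf+h^{jk}V_k)\partial_jx$. Then I would differentiate $h_{ij}=-\langle\partial_i\partial_jx,\nu\rangle$ in $t$ and substitute $\partial_t\partial_i\partial_jx=\partial_i\partial_j(f\nu+V)$. Expanding and using Gauss--Weingarten, the normal contributions produce $-\nabla_i\nabla_jf$ (the full Hessian emerges from $\partial_i\partial_jf$ minus $\Gamma_{ij}^k\partial_kf$ once one projects onto $\nu$) plus $f(h^2)_{ij}$ coming from $\langle\partial_i\nu,\partial_j\nu\rangle$; the $\langle\partial_i\partial_jx,\partial_t\nu\rangle$ term combines with pieces of $\partial_i\partial_jV$ to produce the tangential contribution. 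The main obstacle is showing that this entire tangential contribution reorganizes cleanly as $\nabla_Vh_{ij}+h_j{}^k\nabla_iV_k+h_i{}^k\nabla_jV_k$; concretely, this is the identity $(\mathcal L_Vh)_{ij}=\nabla_Vh_{ij}+h_j{}^k\nabla_iV_k+h_i{}^k\nabla_jV_k$ applied to the symmetric $(0,2)$-tensor $h$, which I would verify by a direct coordinate computation using the Codazzi equation $\nabla_ih_{jk}=\nabla_jh_{ik}$.

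For \eqref{s3.evl-H}, I would trace \eqref{s3.evl-h} against $g^{ij}$, using $\partial_tg^{ij}=-g^{ik}g^{j\ell}\partial_tg_{k\ell}=-2fh^{ij}-g^{ik}g^{j\ell}(\nabla_kV_\ell+\nabla_\ell V_k)$. The $-g^{ij}\nabla_i\nabla_jf$ piece becomes $-\Delta f$; the $fg^{ij}(h^2)_{ij}$ and $-2fh^{ij}h_{ij}$ pieces from $\partial_tg^{ij}$ combine to $-|A|^2f$; and the remaining tangential terms, together with the $\partial_tg^{ij}$ contribution from $V$, collapse to $V^k\nabla_kH=\nabla_VH$ using Codazzi once again. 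Since this step is purely algebraic given the previous two, the only care needed is bookkeeping of signs and the cancellation $g^{ij}(h_i{}^k\nabla_jV_k+h_j{}^k\nabla_iV_k)-2h^{ij}\nabla_iV_j-(\text{Christoffel remainders})=0$ modulo what is absorbed into $\nabla_VH$.
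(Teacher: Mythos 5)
Your derivation is correct and is the standard direct computation; the paper itself does not re-prove this proposition but simply cites \cite[Prop.\ 2.11]{Weng-Xia2022}. Your sign conventions ($h_{ij}=-\langle\partial_i\partial_j x,\nu\rangle$, $\langle\partial_i\nu,\partial_j x\rangle=h_{ij}$) agree with the paper's $h(X,Y)=-\langle D_XY,\nu\rangle$, the computation of $\partial_t\nu=(-\nabla^jf+h^{jk}V_k)\partial_jx$ is right, and the key observation --- that the tangential ($V$-dependent) contribution to $\partial_t h_{ij}$ organizes into the Lie derivative $(\mathcal L_Vh)_{ij}=\nabla_Vh_{ij}+h_j{}^k\nabla_iV_k+h_i{}^k\nabla_jV_k$, which one checks against the raw Christoffel-laden expression using the Codazzi identity $\nabla_kh_{ij}=\nabla_ih_{kj}$ --- is exactly the bookkeeping that makes this clean.

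One tiny overstatement: in the trace step for $\partial_tH$, the cancellation $g^{ij}\bigl(h_j{}^k\nabla_iV_k+h_i{}^k\nabla_jV_k\bigr)-2h^{ij}\nabla_iV_j=0$ is pure index gymnastics plus $\nabla g=0$ (so $g^{ij}\nabla_Vh_{ij}=\nabla_VH$); no further use of Codazzi is needed there. This does not affect the validity of the argument.
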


We also need the following lemma.
\begin{lem}[\cite{Scheuer-Wang-Xia2018}]\label{s2:lem-<X,v>}
	There hold
	\begin{equation}\label{eq-gradient <X,v>}
	\nabla_i\langle X_e,\nu\rangle=\langle e_i,e\rangle\langle x,\nu\rangle-\langle e_i,x\rangle\langle e,\nu\rangle+h_i^k\langle X_e, e_k\rangle
	\end{equation}
	and
	\begin{align}
	\nabla_i\nabla_j\langle X_e,\nu\rangle=&\langle X_e,\nabla h_{ij}\rangle+\langle x,e\rangle h_{ij}-\left(h\right)^2_{ij}\langle X_e,\nu\rangle-g_{ij}\langle e,\nu\rangle\nonumber\\
	&+h_j^k\left(\langle  e_i,e\rangle\langle x,e_k\rangle-\langle e_i,x\rangle\langle e,e_k\rangle\right)\nonumber\\
&+h_i^k\left(\langle e_j,e\rangle\langle x,e_k\rangle-\langle e_j,x\rangle\langle e,e_k\rangle\right).\label{eq-Hess <X,v>}
	\end{align}
\end{lem}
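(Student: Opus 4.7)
The proof is a direct computation, carried out at an arbitrary point $p\in\Sigma$ using a local frame $\{e_i\}$ in normal coordinates at $p$ so that $\nabla_{e_i}e_j(p)=0$ and (via the Gauss formula with the sign convention $h(X,Y)=-\langle D_XY,\nu\rangle$ of the paper) $D_{e_j}e_i(p)=-h_{ji}\nu$. All computations below are understood to hold at $p$, after which the identities extend tensorially.

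For the first identity, apply the Leibniz rule in the ambient connection:
\begin{align*}
\nabla_i\langle X_e,\nu\rangle=\langle D_{e_i}X_e,\nu\rangle+\langle X_e,D_{e_i}\nu\rangle.
\end{align*}
Differentiate $X_e=\langle x,e\rangle x-\tfrac12(|x|^2+1)e$ directly, using $D_{e_i}x=e_i$:
\begin{align*}
D_{e_i}X_e=\langle e_i,e\rangle x+\langle x,e\rangle e_i-\langle x,e_i\rangle e.
\end{align*}
Pair with $\nu$ (killing the middle term since $\langle e_i,\nu\rangle=0$), and insert the Weingarten relation $D_{e_i}\nu=h_i^ke_k$. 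This yields \eqref{eq-gradient <X,v>}.

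For the Hessian I would differentiate once more, writing $F=\langle X_e,\nu\rangle$ and splitting $e_i(F)=A_i+B_i$ with $A_i=\langle D_{e_i}X_e,\nu\rangle$ and $B_i=h_i^k\langle X_e,e_k\rangle$. For $e_jA_i$ one needs $D_{e_j}D_{e_i}X_e$, which is obtained by another straightforward differentiation (using $D_{e_j}e_i=-h_{ji}\nu$ at $p$); pairing with $\nu$ and then expanding $\langle D_{e_i}X_e,D_{e_j}\nu\rangle=h_j^k\langle D_{e_i}X_e,e_k\rangle$ produces
\begin{align*}
e_jA_i=-h_{ji}\langle x,e\rangle-g_{ji}\langle e,\nu\rangle+h_j^k\bigl(\langle e_i,e\rangle\langle x,e_k\rangle+\langle x,e\rangle g_{ik}-\langle x,e_i\rangle\langle e,e_k\rangle\bigr).
\end{align*}
The contribution $h_j^k\langle x,e\rangle g_{ik}=h_{ij}\langle x,e\rangle$ precisely cancels the $-h_{ji}\langle x,e\rangle$ from the $D^2X_e$ computation. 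For $e_jB_i$, use the product rule and $D_{e_j}e_k(p)=-h_{jk}\nu$ to obtain
\begin{align*}
e_jB_i=(\nabla_jh_i^k)\langle X_e,e_k\rangle+h_i^k\bigl(\langle e_j,e\rangle\langle x,e_k\rangle+\langle x,e\rangle g_{jk}-\langle x,e_j\rangle\langle e,e_k\rangle\bigr)-(h^2)_{ij}\langle X_e,\nu\rangle.
\end{align*}
The term $h_i^k\langle x,e\rangle g_{jk}=h_{ij}\langle x,e\rangle$ survives and becomes the $+\langle x,e\rangle h_{ij}$ in the stated formula. Finally, apply the Codazzi equation $\nabla_jh_{il}=\nabla_lh_{ij}$ to rewrite $(\nabla_jh_i^k)\langle X_e,e_k\rangle=\langle X_e,\nabla h_{ij}\rangle$, and sum to obtain \eqref{eq-Hess <X,v>}.

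The only real subtlety is bookkeeping: several $\langle x,e\rangle h_{ij}$ contributions appear with different signs from $D^2X_e$, from the Weingarten pairings $h_j^k g_{ik}$, and from $h_i^kg_{jk}$, and one must keep them straight to see that exactly one copy survives. Beyond that the argument is a mechanical application of the Gauss, Weingarten, and Codazzi equations.
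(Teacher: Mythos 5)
Your computation is correct, and since the paper simply cites this lemma from Scheuer--Wang--Xia without reproducing a proof, there is no in-paper argument to compare against; a direct computation of this kind is exactly what one expects the cited reference to contain. I checked the details: the Leibniz expansion $\nabla_i\langle X_e,\nu\rangle=\langle D_{e_i}X_e,\nu\rangle+\langle X_e,D_{e_i}\nu\rangle$, the formula $D_{e_i}X_e=\langle e_i,e\rangle x+\langle x,e\rangle e_i-\langle x,e_i\rangle e$, the Gauss-formula substitution $D_{e_j}e_i(p)=-h_{ji}\nu$ consistent with the paper's sign convention $h(X,Y)=-\langle D_XY,\nu\rangle$ (which forces $D_{e_i}\nu=h_i^ke_k$), the cancellation of the two $\langle x,e\rangle h_{ij}$ terms in $e_jA_i$, the surviving $\langle x,e\rangle h_{ij}$ from $e_jB_i$, and the Codazzi rewrite $(\nabla_jh_i^{\,k})\langle X_e,e_k\rangle=\langle X_e,\nabla h_{ij}\rangle$ all check out. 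One small bookkeeping point worth making explicit, which you glossed over: in $\langle D_{e_j}D_{e_i}X_e,\nu\rangle$ the cross terms $-h_{ji}\langle\nu,e\rangle\langle x,\nu\rangle$ and $+h_{ji}\langle x,\nu\rangle\langle e,\nu\rangle$ (arising from $D_{e_j}e_i=-h_{ji}\nu$ applied to the $x$-factor and the $\langle x,e_i\rangle$-factor respectively) cancel each other, which is why only $-h_{ji}\langle x,e\rangle-g_{ji}\langle e,\nu\rangle$ survives before adding the Weingarten piece; it is not entirely obvious at a glance and is the kind of hidden cancellation that makes the "bookkeeping" remark at the end of your write-up apt.
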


Now, we calculate the evolution equations along the flow \eqref{s3:GL-flow}.
\begin{lem}
	Along the flow \eqref{s3:GL-flow} with $f$ given by \eqref{s3.f}, we have
	\begin{enumerate}[(i)]
    \item The induced metric $g_{ij}$ evolves by
    \begin{align}\label{metric-upper}
    \frac{\partial}{\partial t} g_{ij}=2\left(n\langle x+\cos\theta\nu,e\rangle-H\langle X_e,\nu\rangle\right)h_{ij}+\nabla_i V_j+\nabla_jV_i.
    \end{align}
	\item The mean curvature $H$ evolves by
	\begin{align}\label{H-upper}
	 \frac{\partial}{\partial t} H=&\langle X_e,\nu\rangle \D H+\Big\langle H X_e-n\cos\t e+V,\nabla H \Big\rangle\nonumber\\
&+2\langle \nabla \langle X_e,\nu\rangle,\nabla H\rangle+(H^2-n|A|^2)\langle x,e\rangle.
	\end{align}
	and
	\begin{align}\label{s3:bdry-condition}
	\nabla_\mu H=0, \quad \text{on}~ \partial \Sigma_t.
	\end{align}
	\item The second fundamental form $h=(h_{ij})$ evolves by
	\begin{align}\label{s3:evol-second-fundamental-form}
	\frac{\partial}{\partial t}h_{ij}=& \langle X_e,\nu\rangle \D h_{ij}+\Big\langle HX_e-n\cos \t e+V,\nabla h_{ij}\Big\rangle\nonumber\\
	&+\nabla_i H \(\langle e_j,e\rangle \langle x,\nu \rangle-\langle e_j,x\rangle \langle e,\nu\rangle+h_j^k\langle X_e,e_k\rangle\)\nonumber\\
	&+\nabla_j H  \(\langle e_i,e\rangle \langle x,\nu \rangle-\langle e_i,x\rangle \langle e,\nu\rangle+h_i^k\langle X_e,e_k\rangle\)\nonumber\\
	&+(h^2)_{ij}\Big( n\langle x,e\rangle+2n\cos\t\langle e,\nu\rangle-3\langle X_e,\nu\rangle H\Big)\nonumber\\
	&+h_{ij}\Big( n\langle e,\nu\rangle+\langle X_e,\nu\rangle |A|^2 +H\langle x,e\rangle\Big)\nonumber\\
&+h_j^k \nabla_i V_k+h_i^k\nabla_j V_k -g_{ij}H\langle e,\nu\rangle.
	\end{align}
\end{enumerate}
\end{lem}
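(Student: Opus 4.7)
The proof is a direct computation: substitute the explicit speed $f = n\langle x + \cos\theta\nu, e\rangle - H\langle X_e, \nu\rangle$ into the general evolution formulas of Proposition~\ref{s3:lem-evol-general-speed}, and simplify using Lemma~\ref{s2:lem-<X,v>} together with the standard submanifold identities $\nabla^2_{ij}\langle x, e\rangle = -h_{ij}\langle\nu, e\rangle$, $\nabla_i\langle \nu, e\rangle = h_i^k\langle e_k, e\rangle$, and $\nabla^2_{ij}\langle \nu, e\rangle = \nabla_k h_{ij}\langle e_k, e\rangle - (h^2)_{ij}\langle\nu, e\rangle$ (via Gauss--Weingarten--Codazzi), plus the Simons identity $\Delta h_{ij} = \nabla_i\nabla_j H + H(h^2)_{ij} - |A|^2 h_{ij}$. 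Part (i) is then immediate by plugging $f$ into \eqref{s3.evl-g}.

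\textbf{Evolution of $H$ and the boundary identity.} Applying \eqref{s3.evl-H}, I expand $\Delta f = n\Delta\langle x,e\rangle + n\cos\theta\,\Delta\langle\nu,e\rangle - \Delta(H\langle X_e,\nu\rangle)$, computing $\Delta\langle X_e,\nu\rangle$ by tracing \eqref{eq-Hess <X,v>}. The decisive simplification is that the cross-terms in the trace vanish: writing $x^T = x - \langle x,\nu\rangle\nu$ and $e^T = e - \langle e,\nu\rangle\nu$, one has
\begin{equation*}
h^{ij}\bigl(\langle e_i,e\rangle\langle x,e_j\rangle - \langle e_i,x\rangle\langle e,e_j\rangle\bigr) = h(e^T, x^T) - h(x^T, e^T) = 0
\end{equation*}
by symmetry of $h$. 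After this cancellation, the $n\cos\theta|A|^2\langle\nu,e\rangle$, $nH\langle\nu,e\rangle$, and $H|A|^2\langle X_e,\nu\rangle$ contributions cancel pairwise between $-\Delta f$ and $-|A|^2 f$, producing exactly \eqref{H-upper}. For the boundary identity \eqref{s3:bdry-condition}, I differentiate $f$ in the direction $\mu$, using Proposition~\ref{boundary-h-property}(1) (so $h_\mu^k\langle e_k,e\rangle = h_{\mu\mu}\langle\mu,e\rangle$) and the boundary relations $\langle x,\mu\rangle = \sin\theta$, $\langle x,\nu\rangle = -\cos\theta$ obtained from \eqref{normal transform} and $x|_{\partial\Sigma}\subset\mathbb{S}^n$. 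Comparing the resulting expression with the oblique condition \eqref{s2:bdry-condition-general} forces every term to cancel except $-\nabla_\mu H\cdot\langle X_e,\nu\rangle$, and since $\langle X_e,\nu\rangle>0$ by Proposition~\ref{s2:prop-estimate-convex}(2), we conclude $\nabla_\mu H = 0$.

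\textbf{Evolution of $h_{ij}$.} Starting from \eqref{s3.evl-h}, the key step is to expand $\nabla_i\nabla_j(H\langle X_e,\nu\rangle)$ by the product rule and replace $\nabla_i\nabla_j H$ via Simons. This produces the diffusion term $+\langle X_e,\nu\rangle\Delta h_{ij}$ (from the $(\nabla_i\nabla_j H)\langle X_e,\nu\rangle$ piece, with the overall sign flipping), while the $H(h^2)_{ij}\langle X_e,\nu\rangle$ and $|A|^2 h_{ij}\langle X_e,\nu\rangle$ by-products of Simons are absorbed into the lower-order blocks of \eqref{s3:evol-second-fundamental-form}. The drift $\langle HX_e - n\cos\theta\,e + V, \nabla h_{ij}\rangle$ assembles from the Codazzi piece $\nabla_k h_{ij}\langle e_k,e\rangle$ in $\nabla^2\langle\nu,e\rangle$ (contributing $-n\cos\theta\,e$), the $\langle X_e,\nabla h_{ij}\rangle$ piece of \eqref{eq-Hess <X,v>} (contributing $HX_e$), and $\nabla_V h_{ij}$ from \eqref{s3.evl-h}. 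The first-order gradient terms in $H$ come from the product rule cross-pieces $(\nabla_i H)\nabla_j\langle X_e,\nu\rangle + (\nabla_j H)\nabla_i\langle X_e,\nu\rangle$ with $\nabla_j\langle X_e,\nu\rangle$ given by \eqref{eq-gradient <X,v>}. The main obstacle is purely organizational bookkeeping: tracking the proliferation of cross-products $\langle e_i,e\rangle$, $\langle e_i,x\rangle$, $h_i^k$, $\langle X_e,e_k\rangle$ appearing through \eqref{eq-gradient <X,v>}--\eqref{eq-Hess <X,v>}, and then collecting the resulting coefficients of $(h^2)_{ij}$, $h_{ij}$, and $g_{ij}$ to match the compact form \eqref{s3:evol-second-fundamental-form}. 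Beyond Simons and Lemma~\ref{s2:lem-<X,v>}, no further identity is required.
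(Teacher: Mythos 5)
Your proposal is correct and follows essentially the same route as the paper: substitute the speed $f$ into the general evolution equations of Proposition~\ref{s3:lem-evol-general-speed}, apply the standard Hessian/Laplacian identities for $\langle x,e\rangle$, $\langle\nu,e\rangle$, Lemma~\ref{s2:lem-<X,v>}, and Simons' identity, then collect terms. Your observation that the traced cross-terms of $\nabla^2\langle X_e,\nu\rangle$ cancel because $h(e^T,x^T)=h(x^T,e^T)$ is accurate and is implicit in the paper's computation of $\Delta\langle X_e,\nu\rangle$.

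One small point of difference is the treatment of the boundary identity $\nabla_\mu H=0$: the paper cites the two derivative identities from Wang--Xia,
\begin{equation*}
\nabla_\mu\langle X_e,\nu\rangle=\Bigl(\tfrac{1}{\sin\theta}+\cot\theta\,h_{\mu\mu}\Bigr)\langle X_e,\nu\rangle,
\qquad
\nabla_\mu\langle x+\cos\theta\nu,e\rangle=\Bigl(\tfrac{1}{\sin\theta}+\cot\theta\,h_{\mu\mu}\Bigr)\langle x+\cos\theta\nu,e\rangle,
\end{equation*}
which, substituted into $\nabla_\mu f=(\tfrac{1}{\sin\theta}+\cot\theta\,h_{\mu\mu})f$, immediately give $(\nabla_\mu H)\langle X_e,\nu\rangle=0$. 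You propose to derive the cancellation from scratch using Proposition~\ref{boundary-h-property}(1), the boundary relations $\langle x,\mu\rangle=\sin\theta$, $\langle x,\nu\rangle=-\cos\theta$, and the formula for $\nabla\langle X_e,\nu\rangle$. This is doable and amounts to reproving the two Wang--Xia identities, but as stated your phrase ``forces every term to cancel'' elides the actual algebra: one also has to use $X_e=\langle x,e\rangle x-e$ on $\partial\Sigma_t$ and the decomposition $\mu=\sin\theta\,x+\cos\theta\,\bar\nu$. If you carry this out explicitly (or simply cite \cite[Prop.~3.3]{Wang-Xia2019-2} as the paper does), the step is complete.
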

\begin{proof}
(1) Equation \eqref{metric-upper} follows from \eqref{s3.f} and \eqref{s3.evl-g}.

(2) Substituting \eqref{s3.f} into \eqref{s3.evl-H}, we get
\begin{align}\label{s3.H1}
\partial_t H=&-n\Delta \langle x,e\rangle -n\cos\theta \Delta \langle \nu,e\rangle \nonumber\\
&+\Delta H\langle X_e,\nu\rangle +2\langle \nabla H,\nabla\langle X_e,\nu\rangle \rangle+H \D \langle X_e,\nu\rangle\nonumber\\
& -|A|^2(n\langle x+\cos\theta\nu,e\rangle-H\langle X_e,\nu\rangle)+\nabla_VH.
\end{align}
Recall that
 \begin{align}
  - \Delta \langle x,e\rangle = & H\langle \nu,e\rangle, \label{s3.H2} \\
   \Delta \langle \nu,e\rangle = & \langle \nabla^i(h_i^ke_k),e\rangle =\langle \nabla H,e\rangle -|A|^2\langle \nu,e\rangle. \label{s3.H3}
 \end{align}
Substituting \eqref{s3.H2}, \eqref{s3.H3} and  \eqref{eq-Hess <X,v>} into \eqref{s3.H1}, we obtain the equation \eqref{H-upper}.

For the boundary condition \eqref{s3:bdry-condition}, by \cite[Proposition 3.3]{Wang-Xia2019-2}, along $\partial \Sigma_t$ we have
\begin{align*}
\nabla_\mu \langle X_e,\nu\rangle=\(\frac{1}{\sin\t}+\cot\t h_{\mu\mu}\)\langle X_e,\nu\rangle,
\end{align*}
and
\begin{align*}
\nabla_\mu \langle x+\cos\t \nu,e\rangle=\(\frac{1}{\sin\t}+\cot\t h_{\mu\mu}\)\langle x+\cos\t \nu,e\rangle.
\end{align*}
Substituting the above two equalities into \eqref{s2:bdry-condition-general}, we obtain $\nabla_\mu H=0$.

(3) Substituting \eqref{s3.f} into \eqref{s3.evl-h}, we have
\begin{align*}
\frac{\partial}{\partial t}h_{ij}=&-n\nabla_i\nabla_j\langle x,e\rangle -n\cos\t\nabla_i\nabla_j\langle \nu,e\rangle \nonumber\\
&+\nabla_i\nabla_jH \langle X_e,\nu\rangle+H\nabla_i\nabla_j\langle X_e,\nu\rangle+\nabla_i H \nabla_j\langle X_e,\nu \rangle+\nabla_j H \nabla_i
\langle X_e,\nu \rangle\\
&+\Big(n\langle x,e\rangle+n\cos\t\langle \nu,e\rangle-H\langle X_e,\nu\rangle\Big)(h^2)_{ij}\\
&+\nabla_V h_{ij}+h_j^k \nabla_i V_k+h_i^k \nabla_j V_k.
\end{align*}
For the first two terms, we have
\begin{align*}
  -\nabla_i\nabla_j\langle x,e\rangle= & h_{ij}\langle \nu,e\rangle,  \\
  \nabla_i\nabla_j\langle \nu,e\rangle= & \nabla_i\langle h_j^ke_k,e\rangle =\langle \nabla h_{ij},e\rangle -(h^2)_{ij}\langle \nu,e\rangle.
\end{align*}
By the Simons' identity
\begin{align*}
\nabla_i\nabla_j H=\D h_{ij}+|A|^2 h_{ij}-H(h^2)_{ij},
\end{align*}
and the equations \eqref{eq-gradient <X,v>} - \eqref{eq-Hess <X,v>}, we get
\begin{align*}
\frac{\partial}{\partial t}h_{ij}=&n h_{ij}\langle e,\nu \rangle+ n \cos\t (h^2)_{ij}\langle e,\nu\rangle-n\cos\t\langle \nabla h_{ij},e\rangle\\
&+\nabla_i H (\langle e_j,e\rangle \langle x,\nu \rangle-\langle e_j,x\rangle \langle e,\nu\rangle+h_j^k\langle X_e,e_k\rangle)\\
&+\nabla_j H  (\langle e_i,e\rangle \langle x,\nu \rangle-\langle e_i,x\rangle \langle e,\nu\rangle+h_i^k\langle X_e,e_k\rangle)\\
&+\langle X_e,\nu\rangle(\D h_{ij}+|A|^2 h_{ij}-H(h^2)_{ij})\\
&+H \(\langle X_e,\nabla h_{ij}\rangle+h_{ij}\langle x,e\rangle-(h^2)_{ij}\langle X_e,\nu\rangle-g_{ij}\langle e,\nu\rangle\)\\
&+Hh_j^k \left(\langle e_i,e\rangle\langle x,e_k\rangle-\langle e_i,x\rangle\langle e,e_k\rangle\right)\\
&+Hh_i^k\left(\langle e_j,e\rangle\langle x,e_k\rangle-\langle e_j,x\rangle \langle e,e_k\rangle\right)\\
&+(n\langle x,e\rangle+n\cos\t\langle \nu,e\rangle-H\langle X_e,\nu\rangle)(h^2)_{ij}\\
&+\nabla_V h_{ij}+h_j^k \nabla_i V_k+h_i^k \nabla_j V_k.
\end{align*}
Then \eqref{s3:evol-second-fundamental-form} follows by rearranging the terms.
\end{proof}

\section{Preserving of convexity}\label{sec:5}
We first recall the following tensor maximum principle which was recently developed by the authors \cite{HWYZ2022} and can be viewed as a generalization of the tensor maximum principles of Hamilton \cite{Hamilton1982}, Stahl \cite{Stahl1996-2} and Andrews \cite{And2007}.
\begin{thm}[\cite{HWYZ2022}]\label{s1:thm-max principle}
	Let $\Sigma$ be a smooth compact manifold with boundary $\partial \Sigma$ and $\mu$ be the outward pointing unit normal vector field of $\partial\Sigma$ in $\Sigma$. Assume that $S_{ij}$ is a smooth time-varying symmetric tensor field on $\Sigma$ satisfying
	\begin{equation*}
	\frac{\partial}{\partial t}S_{ij}=a^{k\ell}\nabla_k\nabla_\ell S_{ij}+b^k\nabla_kS_{ij}+N_{ij}
	\end{equation*}
	on $\Sigma\times [0,T]$, where the coefficients $a^{k\ell}$ and $b^k$ are smooth, $\nabla$ is a (possibly time-dependent) smooth symmetric connection, and $(a^{k\ell})$ is positively definite everywhere. Suppose that
	\begin{equation}\label{conditon-MP}
	N_{ij}\xi^i\xi^j+\sup_{\Gamma}2a^{k\ell}\left(2\Gamma^p_k\nabla_\ell S_{ip}\xi^i-\Gamma_k^p\Gamma_\ell^qS_{pq}\right)\geq 0\quad \mathrm{on}~\Sigma\times (0,T],
	\end{equation}
	\begin{equation}\label{eq-bou}
	\left(\nabla_\mu S_{ij}\right)\xi^i\xi^j\geq 0\qquad \mathrm{on}~ \partial\Sigma\times (0,T]
	\end{equation}
	whenever $S_{ij}\geq 0$ and $S_{ij}\xi^i=0$. If $S_{ij}\geq 0$ everywhere on $\Sigma\times\{0\}$, then it remains so on $\Sigma\times[0,T]$.
\end{thm}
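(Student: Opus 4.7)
The plan is to follow the tensor maximum principle strategy originating with Hamilton and refined by Stahl and Andrews, extending it to the boundary setting via a Hopf-type argument. The first step is a standard perturbation: replace $S_{ij}$ by $S_{ij}^{\e}:=S_{ij}+\e(1+t)g_{ij}$ for a fixed smooth metric $g$, which makes the conclusion a strict inequality and allows the original statement to be recovered by letting $\e\to 0$. Arguing by contradiction, let $t_{0}\in(0,T]$ be the first time and $x_{0}\in\Sigma$ the first point at which the smallest eigenvalue of $S_{ij}^{\e}$ reaches zero, and fix a unit null eigenvector $\xi_{0}\in T_{x_{0}}\Sigma$.

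For the interior case $x_{0}\in\mathrm{int}(\Sigma)$, extend $\xi_{0}$ to a smooth time-dependent vector field $\xi$ on a spacetime neighbourhood of $(x_{0},t_{0})$ and consider the scalar function $\varphi(x,t):=S_{ij}^{\e}(x,t)\xi^{i}(x)\xi^{j}(x)$. Since $\varphi$ attains a parabolic minimum value $0$ at $(x_{0},t_{0})$, one has $\partial_{t}\varphi\le 0$, $\nabla\varphi=0$, and $a^{k\ell}\nabla_{k}\nabla_{\ell}\varphi\ge 0$ there. Expanding these using the prescribed evolution of $S_{ij}$ together with $S_{ij}^{\e}\xi^{i}|_{(x_{0},t_{0})}=0$, one obtains
$$0 \;\ge\; N_{ij}\xi^{i}\xi^{j}+\e g_{ij}\xi^{i}\xi^{j}+2a^{k\ell}\bigl(2\Gamma_{k}^{p}\nabla_{\ell}S_{ip}\xi^{i}-\Gamma_{k}^{p}\Gamma_{\ell}^{q}S_{pq}\bigr),$$
where $\Gamma_{k}^{p}:=\nabla_{k}\xi^{p}(x_{0})$ encodes the first-order behaviour of the extension of $\xi_{0}$. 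Optimizing over the choice of extension is equivalent to taking the supremum over $\Gamma$ on the right-hand side, and hypothesis \eqref{conditon-MP} then forces $0\ge \e|\xi_{0}|^{2}>0$, the desired contradiction.

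For the boundary case $x_{0}\in\partial\Sigma$, the inequality $a^{k\ell}\nabla_{k}\nabla_{\ell}\varphi\ge 0$ is no longer automatic and one must instead invoke a Hopf-type principle. The minimum $\varphi(x_{0},t_{0})=0$ together with $\varphi\ge 0$ on a neighbourhood implies (since $\varphi$ satisfies a linear parabolic inequality and the interior case rules out $\varphi\equiv 0$ on an interior parabolic ball) that $\nabla_{\mu}\varphi(x_{0},t_{0})<0$. Expanding $\nabla_{\mu}\varphi$ at $(x_{0},t_{0})$ using $S_{ij}^{\e}\xi^{i}=0$ and the freedom in choosing the boundary extension of $\xi$ to satisfy $\nabla\xi(x_{0})=0$, this reduces to $(\nabla_{\mu}S_{ij})\xi^{i}\xi^{j}+\e\nabla_{\mu}g(\xi,\xi)<0$, contradicting \eqref{eq-bou} once $\e$ is taken small enough and the metric $g$ is chosen with $\nabla_{\mu}g(\xi,\xi)$ controlled on $\partial\Sigma$.

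The main obstacle is coordinating the two ingredients: the scalar Hopf lemma requires a strict parabolic inequality together with a smooth boundary, while the tensorial Hamilton trick hinges on the auxiliary $\Gamma$-freedom, and these two freedoms must be exercised simultaneously without conflict. The cleanest implementation, carried out in \cite{HWYZ2022}, is to test with the modified function $\widetilde{\varphi}:=e^{-Kt}\varphi-\e\psi$, where $\psi$ is a boundary-defining function satisfying $\psi|_{\partial\Sigma}=0$ and $\nabla_{\mu}\psi|_{\partial\Sigma}>0$, and where $K>0$ is chosen large so as to absorb the zeroth-order terms coming from $b^{k}\nabla_{k}\varphi$ and $N_{ij}$; the condition \eqref{eq-bou} is precisely what ensures $\widetilde{\varphi}$ cannot develop a negative boundary minimum, and the interior analysis above simultaneously prevents an interior negative minimum, closing the argument.
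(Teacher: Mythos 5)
Your interior analysis is essentially the standard Hamilton--Andrews argument: the $\e$-perturbation, the first-touch contradiction, and exploiting the freedom in extending $\xi_0$ to trade second derivatives of $S$ for the $\Gamma$-terms. (The sign on $\Gamma_k^p\Gamma_\ell^q S_{pq}$ in your display should be $+$, but since you then take a supremum over $\Gamma$, which is invariant under $\Gamma\mapsto -\Gamma$, this does not affect the conclusion.)

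The boundary case contains a genuine gap. You assert that $\varphi=S^\e_{ij}\xi^i\xi^j$ ``satisfies a linear parabolic inequality'' and invoke the Hopf lemma, but this claim is not justified and is in general false: writing out $\partial_t\varphi-a^{k\ell}\nabla_k\nabla_\ell\varphi-b^k\nabla_k\varphi$ produces error terms of the form $a^{k\ell}(\nabla_k\xi^p)(\nabla_\ell S^\e_{ip})\xi^i$ and $a^{k\ell}S^\e_{pq}(\nabla_k\xi^p)(\nabla_\ell\xi^q)$, which involve $\nabla S^\e$ and $S^\e$ contracted against the derivative of your chosen extension $\xi$ and cannot be absorbed into $\varphi$ and its first derivatives. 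These terms vanish \emph{only at the single point} $(x_0,t_0)$ where $S^\e_{ij}\xi^i=0$ and $\nabla\varphi=0$, not in a spacetime neighbourhood, so there is no closed scalar differential inequality for $\varphi$ to which a Hopf lemma could apply. Testing with $\widetilde\varphi=e^{-Kt}\varphi-\e\psi$ does not repair this: adjusting the boundary-defining function controls the normal derivative but does nothing to close the parabolic inequality.

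The paper does not reproduce the proof of this theorem (it is cited from \cite{HWYZ2022}), but Lemma~\ref{s3:tech-lemma} -- also from \cite{HWYZ2022} and stated here -- reveals how the boundary is actually handled, and it is a different and more elementary mechanism than the Hopf lemma. Under the boundary hypothesis~\eqref{eq-bou}, any minimum of $Z(p,\xi)=S(p)(\xi,\xi)$ on the tangent bundle satisfies $DZ=0$ and $D^2Z\ge 0$ even when the base point lies on $\partial\Sigma$: minimality forces $D_\mu Z\le 0$, while~\eqref{eq-bou} gives $D_\mu Z=(\nabla_\mu S_{ij})\xi^i\xi^j\ge 0$, so $D_\mu Z=0$ and the full second variation is nonnegative. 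In other words, the boundary condition converts a boundary first-touch point into an interior-type critical point, and the interior second-derivative test you carried out applies there verbatim. This observation is the key ingredient your sketch is missing; with it in hand, no separate Hopf-type argument is needed.
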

We also need the following useful lemma.
\begin{lem}[\cite{HWYZ2022}]\label{s3:tech-lemma}
Let $\Sigma$ be a smooth compact manifold with boundary $\partial \Sigma$ and $\mu$ be the outward pointing unit normal vector field of $\partial\Sigma$ in $\Sigma$. Suppose that $S_{ij}$ is a smooth symmetric tensor satisfying $S_{ij}\geq 0$ on $\Sigma$ and
\begin{equation}\label{s4.lem1}
  (\nabla_\mu S_{ij})\xi^i\xi^j\geq 0,\qquad\mathrm{ on}~ \partial\Sigma
\end{equation}
whenever $S_{ij}\xi^j=0$ for some tangent vector $\xi$. Define a function
\begin{equation*}
  Z(p,\xi)=S(p)(\xi,\xi)
\end{equation*}
where $p\in \Sigma$ and $\xi\in T_p\Sigma$. If $Z$ attains its minimum at some $(p,\xi)\in T\Sigma$, then $D Z(p,\xi)=0$ and $D^2Z(p,\xi)$ is non-negative definite.
\end{lem}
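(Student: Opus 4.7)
The plan is to work with the decomposition of $T_{(p,\xi)}(T\Sigma)$ into horizontal and vertical parts induced by the given connection on $\Sigma$: the vertical derivative of $Z$ in direction $w \in T_p\Sigma$ is $2S(p)(\xi,w)$, while the horizontal derivative in direction $u \in T_p\Sigma$ is $(\nabla_u S)(\xi,\xi)$. The first step is to observe that since the fiber $T_p\Sigma$ is linear and has no boundary, the minimum condition forces the vertical derivative to vanish identically, which gives $S(p)\xi = 0$. Combined with $S\ge 0$ this also yields $Z(p,\xi)=S(p)(\xi,\xi)=0$, so the boundary hypothesis of the lemma is applicable at $(p,\xi)$ whenever $p\in\partial\Sigma$. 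Note also that once $S(p)\xi=0$, the Christoffel corrections in $\nabla_k S_{ij} = \partial_k S_{ij}-\Gamma^\ell_{ki}S_{\ell j}-\Gamma^\ell_{kj}S_{\ell i}$ drop out when contracted with $\xi^i\xi^j$, so the horizontal derivative is connection-independent at the minimizer and equals the naive coordinate derivative of $Z$.

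For the horizontal derivative, I would split into subcases. If $p\in\mathrm{int}(\Sigma)$, two-sided variations in every horizontal direction are admissible, so the minimum property forces $(\nabla_u S)(\xi,\xi)=0$ for all $u\in T_p\Sigma$. If $p\in\partial\Sigma$, two-sided variations are still admissible along directions $u\in T_p(\partial\Sigma)$, which handles all tangential horizontal directions. In the one distinguished outward direction $\mu$, only inward variation is admissible, so the minimum property gives $-(\nabla_\mu S)(\xi,\xi)\ge 0$; combining with the hypothesis $(\nabla_\mu S)(\xi,\xi)\ge 0$ produces equality. Together with the vanishing of the vertical derivative, this completes the proof that $DZ(p,\xi)=0$.

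For the second-order conclusion, I would expand $Z$ to second order along an admissible variation $v\in T_{(p,\xi)}(T\Sigma)$. Since $DZ(p,\xi)=0$, minimality gives
\begin{equation*}
D^2Z(p,\xi)(v,v)\ge 0
\end{equation*}
for every $v$ in the closed half-space of admissible vectors, namely those whose horizontal component points inward to $\Sigma$ or is tangent to $\partial\Sigma$. For any inadmissible $v$ the opposite vector $-v$ is admissible, and since $D^2Z(p,\xi)$ is a symmetric bilinear form one has $D^2Z(v,v)=D^2Z(-v,-v)\ge 0$. Hence $D^2Z(p,\xi)$ is non-negative definite on all of $T_{(p,\xi)}(T\Sigma)$.

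I do not anticipate any substantial obstacle: the content of the lemma is really the precise interplay between horizontal and vertical variations at $(p,\xi)\in T\Sigma$, the observation that $S(p)\xi=0$ automatically at a minimum, and the fact that the hypothesis on $\nabla_\mu S$ is what handles the unique direction in which variations are only one-sided. The mild subtlety to be careful about is the half-space-to-full-space extension of the second-order inequality, which works only because $D^2Z(p,\xi)$ is an even quadratic form on the fiber $T_{(p,\xi)}(T\Sigma)$.
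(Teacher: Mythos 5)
Your argument is correct and is the natural proof of this lemma: the vertical first-order condition forces $S(p)\xi=0$ (hence $Z(p,\xi)=0$), the tangential horizontal directions vanish by interior minimality, the one-sided inequality $-(\nabla_\mu S)(\xi,\xi)\ge 0$ from the inward variation combines with the hypothesis to give $(\nabla_\mu S)(\xi,\xi)=0$, and the half-space second-order inequality upgrades to the full space via the evenness of the quadratic form $D^2Z$. The paper cites \cite{HWYZ2022} without reproducing a proof, but this is the standard argument and I see no gap; the side remark that the Christoffel corrections in $\nabla_k S_{ij}\xi^i\xi^j$ cancel once $S\xi=0$ is a nice bonus confirming that the first-order horizontal derivative is well defined independent of the connection.
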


Theorem \ref{s1:thm-max principle} has been used  in \cite{HWYZ2022} to prove the preservation of convexity along the locally constrained inverse curvature flows for capillary hypersurfaces in the half-space. In the following ,we shall apply Theorem \ref{s1:thm-max principle} to show that the convexity is preserved along the flow \eqref{s3:GL-flow}.

\begin{thm}\label{s4:thm-strict-convex}
   Assume that the initial hypersurface $\Sigma\subset \bar{\mathbb{B}}^{n+1}$ is strictly convex with $\t$-capillary boundary $\partial\Sigma\subset \mathbb{S}^n$ such that $\Sigma \subset \widehat{C_{\t,R_2}(e)} \backslash \widehat{C_{\t,R_1}(e)}$. Let $\Sigma_t, t\in [0,T),$ be the smooth solution of the flow \eqref{s3:GL-flow} starting from $\Sigma$. Then $\Sigma_t$ is strictly convex and there exists a constant $\e>0$ depending only on $\Sigma$ such that the principal curvatures $\kappa_i$ of $\Sigma_t$ satisfy
   \begin{equation*}
     \kappa_i\geq \e,\qquad i=1,\cdots,n
   \end{equation*}
   for all $t\in [0,T)$.
\end{thm}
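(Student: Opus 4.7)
The plan is to apply the tensor maximum principle, Theorem~\ref{s1:thm-max principle}, to the symmetric tensor $S_{ij}:=h_{ij}-\e g_{ij}$, where $\e>0$ is chosen small enough that $S_{ij}(\cdot,0)\geq 0$ (possible by strict convexity of $\Sigma$). By Lemma~\ref{s3:lem-barrier} the flow hypersurfaces $\Sigma_t$ stay inside $\widehat{C_{\t,R_2}(e)}\backslash\widehat{C_{\t,R_1}(e)}$, so the estimates of Proposition~\ref{s2:prop-estimate-convex} apply uniformly in $t$. The bound $\k_i\geq\e$ will then follow once the interior null-eigenvector condition \eqref{conditon-MP} and the boundary condition \eqref{eq-bou} are verified for $S_{ij}$.

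Combining \eqref{s3:evol-second-fundamental-form} with \eqref{metric-upper} and using $\nabla g=0$, one obtains $\partial_t S_{ij}=\langle X_e,\nu\rangle\Delta S_{ij}+\langle HX_e-n\cos\t\,e+V,\nabla S_{ij}\rangle+N_{ij}$, where the leading coefficient $a^{k\ell}=\langle X_e,\nu\rangle g^{k\ell}$ is positive definite by Proposition~\ref{s2:prop-estimate-convex}(2). At a null eigenvector $\xi$ of $S_{ij}$ (equivalently $h_{ij}\xi^j=\e\xi^i$ with $|\xi|=1$) the $V$-derivative contributions $(h_j^k\nabla_iV_k+h_i^k\nabla_jV_k)\xi^i\xi^j=2\e(\nabla V)(\xi,\xi)$ coming from $\partial_th_{ij}$ cancel exactly against $-\e(\nabla_iV_j+\nabla_jV_i)\xi^i\xi^j=-2\e(\nabla V)(\xi,\xi)$ coming from $-\e\partial_tg_{ij}$. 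Substituting $f=n\langle x,e\rangle+n\cos\t\langle\nu,e\rangle-H\langle X_e,\nu\rangle$ into the cross term $-2\e^2 f$ and collecting like terms produces the clean decomposition
\[
N_{ij}\xi^i\xi^j=\e(H-n\e)\langle x,e\rangle+(H-n\e)(-\langle e,\nu\rangle)+\e\langle X_e,\nu\rangle(|A|^2-\e H)+G,
\]
where $G=2(\xi^i\nabla_iH)(B_j\xi^j)$ with $B_i=\langle e_i,e\rangle\langle x,\nu\rangle-\langle e_i,x\rangle\langle e,\nu\rangle+h_i^k\langle X_e,e_k\rangle$ collects the $\nabla H$ terms. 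Since $\e$ is the minimum eigenvalue, $H\geq n\e$, and Newton's inequality gives $|A|^2-\e H\geq H(H-n\e)/n\geq 0$; together with Proposition~\ref{s2:prop-estimate-convex}(3)(4), which yields $\langle x,e\rangle\geq\cos\t+\d_0>0$, $-\langle e,\nu\rangle\geq\d_4>0$, and $\langle X_e,\nu\rangle\geq\d_2>0$, each of the first three summands is individually non-negative. The remainder $G$ will be absorbed by the $\sup_\Gamma$ correction in \eqref{conditon-MP} through a suitable choice of $\Gamma^p_k$, using $\nabla H=g^{k\ell}\nabla S_{k\ell}$ and following the strategy used by the authors in \cite{HWYZ2022}.

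For the boundary condition, I split null eigenvectors $\xi$ of $S_{ij}$ on $\partial\Sigma_t$ into two cases, exploiting that $\mu$ is a principal direction (Proposition~\ref{boundary-h-property}(1)). If $\xi\in T(\partial\Sigma_t)$, then $h_{\a\b}\xi^\b=\e\xi^\a$; combining properties (3) and (4) of Proposition~\ref{boundary-h-property} (noting that $h|_{T\partial\Sigma_t}$ and $\tilde h$ commute by (3) and hence can be simultaneously diagonalized) gives the explicit identity $\nabla_\mu S_{ij}\xi^i\xi^j=\nabla_\mu h_{\a\b}\xi^\a\xi^\b=(h_{\mu\mu}-\e)(\e\cot\t+1/\sin\t)|\xi|^2\geq 0$, since $h_{\mu\mu}\geq\e$ and $\t\in(0,\pi/2]$. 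If $\xi=\mu$, then $h_{\mu\mu}=\e$ is the smallest eigenvalue so each tangential eigenvalue satisfies $\l_\a\geq\e$; using $\nabla_\mu H=0$ from \eqref{s3:bdry-condition} and tracing Proposition~\ref{boundary-h-property}(4) over $T(\partial\Sigma_t)$ in the simultaneous eigenbasis yields $\nabla_\mu h_{\mu\mu}=\sum_\a\tilde\l_\a(\l_\a-h_{\mu\mu})\geq 0$, because $\tilde\l_\a=\cot\t\,\l_\a+1/\sin\t>0$ and $\l_\a-h_{\mu\mu}\geq 0$.

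The main obstacle is the interior condition. The relatively clean part is the decomposition of $N_{ij}\xi^i\xi^j$ into manifestly non-negative pieces; the genuine subtlety lies in implementing the $\sup_\Gamma$ absorption of the first-order remainder $G$ within the framework of Theorem~\ref{s1:thm-max principle}. Once this is carried out and combined with the boundary verification above, Theorem~\ref{s1:thm-max principle} delivers $S_{ij}\geq 0$ on $[0,T)$, which is equivalent to the desired uniform convexity bound $\k_i\geq\e$.
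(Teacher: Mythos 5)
Your setup, boundary verification, and decomposition of the reaction term all match the paper's proof of Theorem~\ref{s4:thm-strict-convex}, and your observation that the $V$-derivative contributions cancel and that the three summands
$\e(H-n\e)\langle x,e\rangle$, $-(H-n\e)\langle e,\nu\rangle$, and $\e(|A|^2-\e H)\langle X_e,\nu\rangle$ are each non-negative is exactly the structure the authors use. However, you explicitly leave the absorption of the first-order remainder $G=2(\nabla_1 H)B_1$ as a plan, citing \cite{HWYZ2022} for ``the strategy,'' and this is precisely where the proof actually has content specific to the ball. The paper makes the absorption work by a quantitative estimate on $B_1=\langle e_1,e\langle x,\nu\rangle -x\langle e,\nu\rangle+\e X_e\rangle$: projecting away the normal component and using $\langle x,\nu\rangle^2-\langle e,\nu\rangle^2=\langle x+e,\nu\rangle\langle x-e,\nu\rangle\leq-\d_1\d_4<0$ (Proposition~\ref{s2:prop-estimate-convex}, parts (1) and (4), which are ball-specific $C^0$ estimates), they obtain $B_1^2\leq-2\langle X_e,\nu\rangle\langle e,\nu\rangle$ after shrinking $\e$ so that $4\e^2+8\e\leq\d_1\d_4$. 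Only then does the $\sup_\Gamma$ term — which, after splitting into the cases $\kappa_1=\cdots=\kappa_n=\e$ and $\kappa_1=\cdots=\kappa_m=\e<\kappa_{m+1}$, and invoking Lemma~\ref{s3:tech-lemma} to kill $\nabla_k S_{\ell\ell}$ for $\ell\leq m$, evaluates to $\geq\sum_{\ell>m}|\nabla_1 h_{\ell\ell}|^2/(\kappa_\ell-\e)$ — pair with the $-(H-n\e)\langle e,\nu\rangle=\sum_{\ell>m}(\kappa_\ell-\e)(-\langle e,\nu\rangle)$ summand to complete a perfect square. Your proposal never identifies the inequality $\langle x,\nu\rangle^2-\langle e,\nu\rangle^2<0$, never determines the constraint on $\e$ that makes the cross-term controllable, and never treats the totally umbilic case where $\nabla_1H$ vanishes by the minimality of $Z$. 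Simply pointing at the half-space argument of \cite{HWYZ2022} does not supply these, since the convex-body geometry there is different; the quantitative bound on $B_1$ in terms of $-\langle X_e,\nu\rangle\langle e,\nu\rangle$ is the ball-specific ingredient, and without it the completing-the-square step you gesture at cannot close.
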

\begin{proof}
We consider the tensor
	\begin{equation*}
	S_{ij}=h_{ij}-\varepsilon g_{ij},
	\end{equation*}
where $\varepsilon>0$ is a small constant to be determined later. Since the initial hypersurface $\Sigma$ is strictly convex, we can first choose $\varepsilon>0$ small enough such that $S_{ij}\geq 0$ on $\Sigma$. Combining the equations \eqref{metric-upper},  \eqref{s3:evol-second-fundamental-form}, we deduce that
	\begin{align}\label{s5:evol-S_ij-strict-convex}
	\frac{\partial}{\partial t}S_{ij}=&\frac{\partial}{\partial  t}h_{ij}-\e\frac{\partial}{\partial t}g_{ij}   \nonumber\\
	=&\langle X_e,\nu\rangle\Delta S_{ij}+\langle HX_e+n\cos\theta e+V,\nabla S_{ij}\rangle+S_j^k\nabla_iV_k+S_i^k\nabla_jV_k\nonumber\\
	&+\nabla_iH\left(\langle e_j,e\rangle\langle x,\nu\rangle-\langle e_j,x\rangle\langle e,\nu\rangle+h_j^k\langle X_e,e_k\rangle\right)\nonumber\\
	&+\nabla_jH\left(\langle e_i,e\rangle\langle x,\nu\rangle-\langle e_i,x\rangle\langle e,\nu\rangle+h_i^k\langle X_e,e_k\rangle\right)\nonumber\\
	&+((S^2)_{ij}+2\e S_{ij}+\e^2 g_{ij})\left(n\langle x,e\rangle+2n\cos\theta\langle\nu,e\rangle-3\langle X_e,\nu\rangle H\right)\nonumber\\
	&+(S_{ij}+\e g_{ij})(n\langle\nu,e\rangle+\langle X_e,\nu\rangle|A|^2+H\langle x,e\rangle)-g_{ij}H\langle\nu,e\rangle\nonumber\\
	&-2\e(S_{ij}+\e g_{ij})(\langle x+\cos\t \nu,e\rangle-\langle X_e,\nu\rangle H).
	\end{align}

We first check that the boundary condition \eqref{eq-bou} is satisfied whenever $S_{ij}\geq 0$ and $S_{ij}\xi^j=0$ at some point $p_0\in \partial\Sigma_{t_0}$ for some vector $\xi\in T_{p_0}\Sigma_{t_0}$. Let $\{e_\alpha\}_{\alpha=1}^{n-1}$ be a local orthonormal frame of $\partial\Sigma_t\subset\Sigma_t$ so that $\{\mu\}\cup\{e_\alpha\}_\alpha$ forms an orthonormal frame of $\Sigma_t$. We have either $\xi=\mu$ or $\xi=e_\alpha$ for some $\alpha$ on $\partial\Sigma_{t_0}$. By Proposition \ref{boundary-h-property}, we have:
	\begin{enumerate}[(i)]
		\item If $\xi=e_\alpha$, then $h_{\mu\mu}\geq h_{\a\a}=\e> 0$ and
		\begin{align*}
		\left(\nabla_\mu S_{ij}\right)\xi^i\xi^j=&\nabla_\mu h_{\alpha\alpha}=\tilde{h}_{\alpha\alpha}\left(h_{\mu\mu}-h_{\alpha\alpha}\right)\\
=&\left(\frac{1}{\sin\theta}+\cot\theta h_{\alpha\alpha}\right)(h_{\mu\mu}-\e)\geq 0.
		\end{align*}
		\item If $\xi=\mu$, then $h_{\a\a}\geq h_{\mu\mu}=\e> 0$ for all $\a=1,\cdots,n-1$. By $\nabla_\mu H=0$, we get
		\begin{align*}
		0=&\nabla_\mu h_{\mu\mu}+\sum_{\alpha}\nabla_\mu h_{\alpha\alpha}=\nabla_\mu h_{\mu\mu}+\sum_{\alpha}\left(h_{\mu\mu}-h_{\alpha\alpha}\right)\tilde{h}_{\alpha\alpha}\\
		=&\nabla_\mu h_{\mu\mu}+\sum_{\alpha}\left(\e-h_{\alpha\alpha}\right)\left(\frac{1}{\sin\theta}+\cot\theta h_{\alpha\alpha}\right),
		\end{align*}
		and hence
		\begin{equation*}
		\left(\nabla_\mu S_{ij}\right)\xi^i\xi^j=\nabla_\mu h_{\mu\mu}=\sum_{\alpha}(h_{\alpha\alpha}-\e)\left(\frac{1}{\sin\theta}+\cot\theta h_{\alpha\alpha}\right)\geq 0.
		\end{equation*}
	\end{enumerate}
Therefore, in both cases, the boundary condition \eqref{eq-bou} is satisfied.

We next check the condition \eqref{conditon-MP} whenever $S_{ij}\geq 0$ and $S_{ij}\xi^i=0$ at a point $(p_0,t_0)$.  We choose an orthonormal frame $\{e_1,\cdots,e_n\}$ around $p_0$ such that $h_{ij}$ is diagonal at $(p_0,t_0)$ with principal curvatures $\kappa_1\leq \kappa_2\leq \cdots\leq \kappa_n$ in the increasing order and $e_i$ corresponds to the principal direction of $\kappa_i$, and the null vector $\xi=e_1$.  The lower order terms in \eqref{s5:evol-S_ij-strict-convex} involving $S_{ij}$ and $(S^2)_{ij}$ can be ignored due to the null vector condition. To apply Theorem \ref{s1:thm-max principle}, we need to check that
	\begin{align}\label{s4:tilde-Q_1}
	Q_1:=&2\nabla_1 H \Big(\langle e_1,e\rangle \langle x,\nu \rangle-\langle e_1,x\rangle \langle e,\nu\rangle+\e \langle X_e,e_1\rangle\Big) \nonumber\\
    &-(H-n\e)\langle e,\nu\rangle +\e\Big((|A|^2-\e H)\langle X_e,\nu\rangle+(H-n\e)\langle x,e\rangle\Big)\nonumber\\
	&+2\langle X_e,\nu\rangle\sup_{\G}g^{k\ell}(2\G_k^p \nabla_\ell S_{1p}-\G_k^p\G_\ell^q S_{pq})\geq 0.
	\end{align}
There are two cases:
     \begin{enumerate}
       \item[(1)] $\e=\kappa_1=\cdots =\kappa_n$ at $(p_0,t_0)$;
       \item[(2)] there exists an $1\leq m<n$ such that $\e=\kappa_1=\cdots=\kappa_m<\kappa_{m+1}\leq \cdots\leq \kappa_n$  at $(p_0,t_0)$.
     \end{enumerate}
 We consider the two cases separately.

For \textbf{case (1)}, we have $H=n\e$ and $|A|^2-\e H=0$. So the second line of \eqref{s4:tilde-Q_1} vanishes. Moreover, by Lemma \ref{s3:tech-lemma}, $  \nabla_kS_{\ell\ell}=\nabla_kh_{\ell\ell}=0$ for all $1\leq k,\ell\leq n$. This implies that $\nabla_1H=0$. Then $Q_1\geq 0$ by simply choosing $\Gamma_k^p=0$ for all $k,p=1,\cdots,n$.

For \textbf{case (2)}, we have $S_{11}=\cdots S_{mm}=\k_1-\e=0$ at $(x_0,t_0)$ and $S_{\ell\ell}>0$ for $\ell=m+1,\cdots,n$. Then
	\begin{align*}
	|A|^2-\e H>0, \quad H-n\e>0.
	\end{align*}
Again by Lemma \ref{s3:tech-lemma},  the minimality of $S_{11},\cdots, S_{mm}$ implies that $\nabla_k S_{\ell\ell}=0$ for all $k=1,\cdots,n$ and $\ell=1,\cdots,m$. Then the last term in \eqref{s4:tilde-Q_1} can be explicitly computed as follows
    \begin{align}\label{s4.Gam}
    &\sup_{\G}g^{k\ell}(2\G_k^p\nabla_\ell S_{1p}-\G_k^p\G_\ell^q S_{pq})\nonumber\\
    =&\sup_{\G}\sum_{k=1}^n\(\sum_{\ell=1}^{m}2\G^\ell_k\nabla_k S_{1\ell}+\sum_{\ell=m+1}^{n}(2\G^\ell_k\nabla_k S_{1\ell}-(\G_k^\ell)^2S_{\ell\ell})\)\nonumber\\
      =&\sup_{\G}\sum_{k=1}^n\(\sum_{\ell=1}^{m}2\G^\ell_k\nabla_k h_{1\ell}+\sum_{\ell=m+1}^{n}\(\frac{|\nabla_k S_{1\ell}|^2}{S_{\ell\ell}}-\(\G_k^\ell-\frac{\nabla_k S_{1\ell}}{S_{\ell\ell}}\)^2S_{\ell\ell}\)\)\nonumber\\
   \geq &\sum_{k=1}^n\sum_{\ell=m+1}^{n}\frac{|\nabla_k S_{1\ell}|^2}{S_{\ell\ell}}\nonumber\\
   \geq & \sum_{\ell=m+1}^{n}\frac{|\nabla_1 h_{\ell\ell}|^2}{\kappa_{\ell}-\e},
    \end{align}
    where in the first inequality we have chosen $\G_k^\ell=\frac{\nabla_k S_{1\ell}}{S_{\ell\ell}}$ for $\ell=m+1,\cdots,n$ and $\G_k^\ell=0$ for $\ell=1,\cdots,m$, and in the last inequality we used the Codazzi equation $\nabla_kh_{1\ell}=\nabla_1h_{k\ell}$ and discarded the terms for $k\neq \ell$.

    For the first term of \eqref{s4:tilde-Q_1}, we calculate that
	\begin{align}\label{s4:Xe}
	&\Big\langle e_1,e\langle x,\nu\rangle-x\langle e,\nu\rangle+\e X_e\Big\rangle^2\nonumber\\
\leq &|e\langle x,\nu\rangle-x\langle e,\nu\rangle+\e X_e|^2-\langle \nu, e\langle x,\nu\rangle-x\langle e,\nu\rangle+\e X_e\rangle^2 \nonumber\\
=&\langle x,\nu\rangle^2+|x|^2\langle e,\nu\rangle^2-2\langle x,\nu\rangle\langle e,\nu\rangle \langle x,e\rangle \nonumber\\
&+2\e\langle X_e,e\langle x,\nu\rangle-x\langle e,\nu\rangle \rangle+\e^2(|X_e|^2-\langle X_e,\nu\rangle^2)\nonumber\\
	= &-2\langle X_e,\nu\rangle\langle e,\nu\rangle+\langle x,\nu\rangle^2-\langle e,\nu\rangle^2 \nonumber\\
	&	+2\e\langle X_e,e\langle x,\nu\rangle-x\langle e,\nu\rangle \rangle+\e^2(|X_e|^2-\langle X_e,\nu\rangle^2).
	\end{align}
By \eqref{s2.est2} and Proposition \ref{s2:prop-estimate-convex}, we have
	\begin{align}\label{s4:key-ineq}
		\langle x,\nu\rangle^2-\langle e,\nu\rangle^2=\langle x+e,\nu\rangle \langle x-e,\nu\rangle\leq -\d_1\d_4,
	\end{align}
where $\delta_1, \delta_4$ are constants in Proposition \ref{s2:prop-estimate-convex}. 	 Since
	\begin{align}\label{s4:key-ineq-2}
	\langle X_e,\nu\rangle \leq |X_e|\leq 2, \quad
	|e\langle x,\nu\rangle-x\langle e,\nu\rangle|\leq 2,
	\end{align}
it follows from \eqref{s4:Xe}, \eqref{s4:key-ineq} and \eqref{s4:key-ineq-2} that
	\begin{align*}
	\langle e_1,e\langle x,\nu\rangle-x\langle e,\nu\rangle+\e X_e\rangle^2	\leq  -2\langle X_e,\nu\rangle\langle e,\nu\rangle -\d_1\d_4+4\e^2+8\e.
	\end{align*}
	Note that $\delta_1, \delta_4$ depend only on $R_1, R_2$ and $\theta$. By choosing $\e>0$ small enough such that $4\e^2+8\e \leq \d_1\d_4$, we have
\begin{equation}\label{s4.thm2-pf1}
 \langle e_1,e\langle x,\nu\rangle-x\langle e,\nu\rangle+\e X_e\rangle^2	\leq  -2\langle X_e,\nu\rangle\langle e,\nu\rangle.
\end{equation}

Then using \eqref{s4.Gam} and \eqref{s4.thm2-pf1},  we obtain the following estimate on $Q_1$:
	\begin{align}\label{s4:tilde-Q_1b}
	Q_1\geq & -2|\sum_{\ell=m+1}^{n}\nabla_1h_{\ell\ell}|\sqrt{-2\langle X_e,\nu\rangle\langle e,\nu\rangle}-\sum_{\ell=m+1}^{n}(\kappa_{\ell}-\e)\langle e,\nu\rangle\nonumber\\
&\quad +2\langle X_e,\nu\rangle \sum_{\ell=m+1}^{n}\frac{|\nabla_1 h_{\ell\ell}|^2}{\kappa_{\ell}-\e}\nonumber\\
\geq &\sum_{\ell=m+1}^{n}\left(\sqrt{\kappa_{\ell}-\e}\sqrt{-\langle e,\nu\rangle }-\sqrt{2\langle X_e,\nu\rangle}\frac{|\nabla_1h_{\ell\ell}|}{\sqrt{\kappa_{\ell}-\e}}\right)^2	\nonumber\\
\geq &0.
 \end{align}

In summary, in both two cases, we have ${Q}_1\geq 0$ and thus the condition \eqref{conditon-MP} in Theorem \ref{s1:thm-max principle} is satisfied. Therefore, we conclude that $S_{ij}\geq 0$ is preserved for $t>0$. This implies that $h_{ij}\geq \e g_{ij}>0$ for $t\in [0,T)$.
\end{proof}

\section{Proofs of Theorem \ref{thm-con} and Theorem \ref{co-af}}\label{sec:6}
In this section, we prove the long time existence and smooth convergence of the flow \eqref{Guan-Li-flow} for strictly convex hypersurfaces in the unit ball with $\theta$-capillary boundary. As an application, we prove a family of new inequalities for quermassintegrals $W_{k,\theta}(\widehat{\Sigma})$.

\subsection{Long-time existence}
Let $\Sigma\subset\bar{\mathbb{B}}^{n+1}(n\geq 2)$ be a smooth properly embedded, strictly convex hypersurface with $\theta$-capillary boundary ($\theta\in (0,\frac{\pi}{2}]$), given by an embedding: $x:\bar{\mathbb{B}}^n\to\Sigma\subset\bar{\mathbb{B}}^{n+1}$. By Theorem \ref{s4:thm-strict-convex}, the solution $\Sigma_t$ of the flow \eqref{s3:GL-flow} starting from $\Sigma$ remains to be strictly convex for $t>0$. Proposition \ref{s2:prop-estimate-convex} implies that $\langle X_e,\nu\rangle \geq \d_2>0$ and hence $\Sigma_t$ is star-shaped for $t\geq 0$. Then we can reduce the flow \eqref{s3:GL-flow} to a scalar parabolic equation with oblique boundary condition as in \cite{WW2020,Wang-Xia2019}

We briefly review the transformation of the flow \eqref{Guan-Li-flow} to a scalar parabolic equation on the hemisphere $\bar{\mathbb{S}}_+^n$, and refer the readers to \cite{WW2020,Wang-Xia2019} for more details. Assume that $e$ is the $(n+1)$-th coordinate vector $e_{n+1}$.  Consider the following M\"obius transformation:
\begin{align}\label{s6:conform-transform}
\varphi: ~\-{\mathbb B}^{n+1} \quad &\longrightarrow \quad \-{\mathbb R}_{+}^{n+1},\nonumber\\
(x',x_{n+1})  &\longmapsto  \frac{2(x',0)+(1-|x'|^2-x_{n+1}^2)e}{|x'|^2+(x_{n+1}-1)^2}:=(y',y_{n+1})=\~y,
\end{align}
where $x=(x',x_{n+1})$ with $x'=(x_1,\cdots,x_n)\in \mathbb R^{n}$ and $x_{n+1}=\langle x,e\rangle$. Then $\varphi$ maps $\mathbb S^n=\partial \mathbb B^{n+1}$ to $\partial \mathbb R_{+}^{n+1}: =\{(y',y_{n+1})\in \mathbb R^{n+1}:y_{n+1}=0\}$ and
\begin{align*}
\varphi^\ast(\d_{\mathbb R^{n+1}_{+}})=\frac{4}{(|x'|^2+(x_{n+1}-1)^2)^2}\d_{\mathbb B^{n+1}}:=e^{-2w}\d_{\mathbb B^{n+1}},
\end{align*}
which implies that $\varphi$ is a conformal transformation from $(\-{\mathbb B}^{n+1},\d_{\mathbb B^{n+1}})$ to $(\-{\mathbb R}_{+}^{n+1},\d_{\mathbb R_+^{n+1}})$. Then a properly embedded hypersurface $\Sigma_t=x(\-{\mathbb B}^n,t)$ in $(\-{\mathbb B}^{n+1},\d_{\mathbb B^{n+1}})$ can be identified with $\widetilde{\Sigma}_t:=\~y(\-{\mathbb B}^n,t)$ in $(\-{\mathbb R}_{+}^{n+1},(\varphi^{-1})^\ast\d_{\mathbb R_+^{n+1}})$, where $\~y=\varphi\circ x$.

Note that $X_e$ is a conformal vector field such that $\varphi_{\ast}(X_e)=-\~y$. For a hypersurface $\Sigma \subset \-{\mathbb B}^{n+1}$ with capillary boundary $\partial \Sigma\subset \mathbb S^n$, one has
\begin{align*}
e^{-2w}\langle X_e,\nu\rangle=\langle \varphi_\ast(X_e),\varphi_\ast(\nu)\rangle=|\varphi_{\ast}(\nu)|\langle \~y,\~\nu\rangle,
\end{align*}
where $|\varphi_\ast(\nu)|=\frac{1}2(|y'|^2+(y_{n+1}+1)^2)$ and $\~\nu:=-\frac{\varphi_\ast(\nu)}{|\varphi_\ast(\nu)|}$. Hence, the hypersurface $\varphi(\Sigma)$ is star-shaped in $\mathbb R_{+}^{n+1}$ with respect to the origin, i.e., $\langle \~y,\~\nu\rangle>0$ on $\varphi(\Sigma)$,  if and only if $\langle X_e,\nu\rangle>0$  on $\Sigma$. In particular, since $\langle X_e,\nu\rangle>0$ on $\Sigma_t$ by Proposition \ref{s2:prop-estimate-convex}, the hypersurface $\widetilde{\Sigma}_t$ in $(\-{\mathbb R}_{+}^{n+1},(\varphi^{-1})^\ast\d_{\mathbb R_+^{n+1}})$ can be written as a radial graph over $\-{\mathbb S}^{n}_{+}$.

In $\mathbb R_{+}^{n+1}$, we use the polar coordinate $\tilde{y}=(\rho,z)\in [0,\infty)\times \bar{\mathbb{S}}_+^n$, where $\rho$ is the distance from $\tilde{y}$ to the origin and we write $z=(\b,\xi)\in [0,\frac{\pi}{2}]\times \mathbb S^{n-1}$ for the spherical polar coordinate of $z\in \mathbb S^{n}$. Then
\begin{align}\label{s6:rho-def}
\left\{\begin{aligned}
\rho^2=&|y'|^2+y_{n+1}^2,\\
y_{n+1}=&\rho \cos \b,\quad |y'|=\rho \sin \b.
\end{aligned}   \right.
\end{align}
Let $u:=\log \rho$ and $v:=\sqrt{1+|\nabla^{\mathbb S}u|^2}$, where $\nabla^{\mathbb S}$ is the Levi-Civita connection on $\mathbb S^{n}_+$ with respect to the round metric $\s$. We have
\begin{align}\label{s6:<X,nu>-expression}
\langle X_e,\nu \rangle=e^w\frac{\rho}{v}=\frac{2\rho}{\sqrt{1+|\nabla^{\mathbb S}u|^2}(\rho^2+2\rho\cos \b+1)}.
\end{align}

Furthermore, up to a time-dependent tangential diffeomorphism, one can rewrite the flow \eqref{Guan-Li-flow} equivalently as the following scalar parabolic equation on $\-{\mathbb S}_{+}^{n}$:
\begin{equation}\label{Guan-Li-scalar-flow}
\left\{\begin{aligned}
\frac{\partial}{\partial t} u&=F(\nabla^2_{\mathbb S} u, \nabla^\mathbb S u, \rho, \b) \qquad &\text{in}\quad \mathbb S_{+}^n \times[0,T),\\
\nabla^{\mathbb S}_{\partial_\b}u&= -\cos\t \sqrt{1+|\nabla^{\mathbb S}u|^2}\qquad &\text{on}\quad \partial\mathbb S_+^{n} \times[0,T),\\
u(\cdot,0)&=u_0(\cdot) \qquad &\text{on} \quad \mathbb S_{+}^n,
\end{aligned}\right.
\end{equation}
where $u_0=\log\rho_{0}$, $\rho_0$ is radial function of the initial hypersurface $\varphi(\Sigma_0)$, $\partial_\b$ is the unit outward normal of $\partial \mathbb S_+^n$ on $\-{\mathbb S}_+^{n}$ and
\begin{align*}
F(\nabla^2_{\mathbb S} u, \nabla^\mathbb S u, \rho, \b):=&\operatorname{div}_{\mathbb S^{n}_{+}}\(\frac{\nabla^{\mathbb S} u}{\rho e^w v}\)-\frac{n+1}{v}\s\(\nabla^{\mathbb S} u,\nabla^{\mathbb S}(\frac{1}{\rho e^w})\)\\
&-\frac{n\cos\t}{2}\cdot \frac{\rho^2-1}{\rho}\sin\b \s(\nabla^{\mathbb S} u,\partial_\b)\\
                                 &+\frac{n\cos \t}{2}\cdot \frac{\rho^2 \cos\b+2\rho+\cos\b}{\rho}.
\end{align*}
As $\theta\in (0,\frac{\pi}{2}]$, we have $\cos\theta \in [0,1)$. Then we have a uniformly oblique boundary condition in \eqref{Guan-Li-scalar-flow}.

The barrier estimate in Lemma \ref{s3:lem-barrier} and the convexity of $\Sigma_t$ imply the uniform $C^1$ estimate of $\Sigma_t$. In fact, under the transformation \eqref{s6:conform-transform}, using \eqref{s6:rho-def}  we have
\begin{align*}
\rho^2=&\frac{|x'|^4+(1-x_{n+1}^2)^2+2(1+x_{n+1}^2)|x'|^2}{|x'|^4+(1-x_{n+1})^4+2(1-x_{n+1})^2|x'|^2}\\
      =&1+\frac{4x_{n+1}}{|x'|^2+(1-x_{n+1})^2}.
\end{align*}
By Proposition \ref{s2:prop-estimate-convex}, we have $x_{n+1}=\langle x,e\rangle \geq \cos\t+\d_0$ and $\langle x-e,\nu\rangle \geq \d_1>0$. Then
\begin{align*}
\rho^2 \geq & 1+2(\cos\t+\d_0) \geq 1+2\d_0, \\
\rho^2 \leq & 1+\frac{4}{|x-e|^2} \leq 1+\frac{4}{|\langle x-e,\nu\rangle|^2}\leq 1+\frac{4}{\d_1^2}.
\end{align*}
Since $u=\log \rho$, we obtain the uniform $C^0$ estimate of $u$, i.e.,
\begin{equation*}
  0<c_1\leq u\leq c_2<\infty
\end{equation*}
for some constants $c_1,c_2$ depending on $\Sigma_0$. For the $C^1$ estimate, using $\langle X_e,\nu\rangle\geq \d_2>0$ on $\Sigma_t$ and \eqref{s6:<X,nu>-expression} we obtain that
\begin{align*}
|\nabla^{\mathbb S}u|\leq &\sqrt{1+|\nabla^{\mathbb S}u|^2}\\
 \leq &\frac{2\rho}{\d_2\sqrt{\rho^2+2\rho\cos\b+1}}\\
 \leq &\frac{2}{\d_2}\sqrt{1+\frac{4}{\d_1^2}}=:c_3,
\end{align*}
where $c_3$ is a positive constant depending on $\Sigma_0$. It follows that the scalar flow equation \eqref{Guan-Li-scalar-flow} is a quasilinear parabolic equation with uniform oblique boundary condition. The higher order estimates of $u$ follows from the classical parabolic theory for quasi-linear parabolic equations \cite[Theorem 7.4]{Ural1991} (see also \cite[\S 13]{Lieb96}). As a consequence, the solution $u$ of \eqref{Guan-Li-scalar-flow} exists for all time $t\in [0,\infty)$. Equivalently, we have
\begin{prop}\label{s5:long-time-existence}
Let $\Sigma\subset\bar{\mathbb{B}}^{n+1}(n\geq 2)$ be a properly embedded, strictly convex hypersurface with $\theta$-capillary boundary $\partial\Sigma\subset \mathbb{S}^n$ ($\theta\in (0,\frac{\pi}{2}]$), given by an embedding: $x:\bar{\mathbb{B}}^n\to\Sigma\subset\bar{\mathbb{B}}^{n+1}$. Then the solution $\Sigma_t$ of the flow \eqref{Guan-Li-flow} starting from $\Sigma$ has uniform $C^k$ regularity estimates for all $k\geq 0$ and exists for all time $t\in [0,\infty)$.
\end{prop}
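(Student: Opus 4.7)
The plan is to reduce the flow \eqref{Guan-Li-flow} to the scalar parabolic equation \eqref{Guan-Li-scalar-flow} on $\bar{\mathbb S}^n_+$ via the M\"obius transformation \eqref{s6:conform-transform}, and then to invoke the convexity preservation result Theorem \ref{s4:thm-strict-convex} together with the barrier estimate Lemma \ref{s3:lem-barrier} to obtain uniform $C^0$ and $C^1$ estimates on the height function $u$; higher regularity and long-time existence will then follow from standard quasilinear parabolic theory.

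First I would verify that the reduction is legitimate. By Theorem \ref{s4:thm-strict-convex}, the solution $\Sigma_t$ is strictly convex for $t>0$, with principal curvatures uniformly bounded below by some $\varepsilon>0$ depending only on $\Sigma_0$. Lemma \ref{s3:lem-barrier} gives $\Sigma_t\subset \widehat{C_{\t,R_2}(e)}\setminus \widehat{C_{\t,R_1}(e)}$ for all $t\ge 0$, so Proposition \ref{s2:prop-estimate-convex} applies and yields $\langle X_e,\nu\rangle\ge\d_2>0$ uniformly in $t$. Consequently $\varphi(\Sigma_t)\subset\mathbb R^{n+1}_+$ is star-shaped with respect to the origin, and can be written as a radial graph $\rho=e^u$ over $\bar{\mathbb S}^n_+$. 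Since $\theta\in(0,\pi/2]$ implies $|\cos\theta|<1$, the boundary condition in \eqref{Guan-Li-scalar-flow} is uniformly oblique.

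Next I would extract the $C^0$ and $C^1$ estimates quantitatively from the geometric bounds. The barrier inclusion together with $\langle x,e\rangle\ge\cos\t+\d_0$ and $\langle x-e,\nu\rangle\ge\d_1$ from Proposition \ref{s2:prop-estimate-convex} gives, via the explicit computation of $\rho^2$ shown in the excerpt, two-sided bounds $0<c_1\le u\le c_2<\infty$. The $C^1$ bound on $u$ then follows directly from \eqref{s6:<X,nu>-expression}: the lower bound $\langle X_e,\nu\rangle\ge\d_2>0$ forces
\begin{equation*}
\sqrt{1+|\nabla^{\mathbb S}u|^2}\le \frac{2\rho}{\d_2\sqrt{\rho^2+2\rho\cos\b+1}}\le c_3,
\end{equation*}
with $c_3$ depending only on $\d_1,\d_2$.

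Finally, with the scalar equation \eqref{Guan-Li-scalar-flow} now genuinely quasilinear, uniformly parabolic on the set $\{c_1\le u\le c_2,\,|\nabla^{\mathbb S}u|\le c_3\}$, and with a uniformly oblique, smooth boundary condition, I would invoke the Krylov–Safonov/Lieberman-type interior and boundary $C^{2,\alpha}$ estimates for quasilinear parabolic equations with oblique boundary data (as in \cite[Thm.~7.4]{Ural1991} and \cite[\S 13]{Lieb96}). Bootstrapping via Schauder theory then yields uniform $C^k$ bounds for every $k\ge 0$, and hence the solution extends to all $t\in[0,\infty)$.

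The main obstacle is the uniform boundary $C^{2,\alpha}$ estimate under the oblique capillary condition; this is precisely where the hypothesis $\theta\in(0,\pi/2]$ (ensuring strict obliqueness $|\cos\theta|<1$) is used and where one must appeal to the Lieberman-type theory rather than a direct computation. All earlier steps ($C^0$, $C^1$, convexity) are already packaged in the preceding lemmas and propositions, so the proof is essentially an assembly argument.
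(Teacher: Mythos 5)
Your proposal is correct and follows essentially the same route as the paper: convexity preservation (Theorem \ref{s4:thm-strict-convex}) together with the barrier estimate (Lemma \ref{s3:lem-barrier}) and Proposition \ref{s2:prop-estimate-convex} give the uniform star-shapedness and $C^0$, $C^1$ bounds on the radial function $u$ after the M\"obius reduction to the scalar equation \eqref{Guan-Li-scalar-flow}; higher regularity and long-time existence then come from the quasilinear parabolic theory of \cite{Ural1991} and \cite{Lieb96}. This is exactly the paper's argument, and the remark that strict obliqueness needs $\theta\in(0,\pi/2]$ matches the paper's observation that $\cos\theta\in[0,1)$ makes the boundary condition uniformly oblique.
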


\subsection{Convergence to a spherical cap}
To prove the smooth convergence to the spherical cap, we explore the monotonicity of the flow \eqref{s3:GL-flow}. By the case $k=1$ of \eqref{s5:monotonicity}, we have
	\begin{align}\label{eq-W_1}
	\frac{d}{dt}{W_{1,\theta}(\widehat{\Sigma_t})}=\frac{n^2}{n+1}\int_{\Sigma_t}(H_2-H_1^2)\langle X_e,\nu\rangle dA_t \leq 0.
	\end{align}
The $C^0$ estimate of $\Sigma_t$ implies that ${W_{1,\theta}(\widehat{\Sigma_t})}$ is uniformly bounded from below. Then the long-time existence and uniform $C^{\infty}$-estimates imply that
	\begin{equation*}
	\int_{\Sigma_t}{(H_2-H_1^2)\langle X_e,\nu\rangle dA_t}\to 0,\, \quad \text{as $t\to\infty$}.
	\end{equation*}
Since $\langle X_e,\nu\rangle\geq \delta_2>0$ along the flow, any limit of $\Sigma_t$ is totally umbilical and hence must be a spherical cap.

The limit spherical cap is uniquely determined and is independent of the subsequence of times. In fact, for any sequence of times $t_j\to\infty$ such that $\Sigma_{t_j}$ converges smoothly to a $\theta$-capillary spherical cap $C_{\theta,r_{\infty}}(e_{\infty})$, the radius $r_\infty$ is uniquely determined by the fact $W_{0,\t}(\widehat{C_{\theta,r_{\infty}}}(e_{\infty}))=W_{0,\t}(\widehat{\Sigma})$. Furthermore, we must have $e_\infty=e$, by using a similar argument as in \cite{Scheuer-Wang-Xia2018,Weng-Xia2022} for the locally constrained inverse curvature flow \eqref{inverse-harmonic-mean-curvature-flow}. The idea is simply that any spherical cap $C_{\theta,r_{\infty}}(e_{\infty})$ with the center $e_\infty\neq e$ is not stationary and must evolve towards $e$ along the flow \eqref{s3:GL-flow}. We refer the readers to \cite{Scheuer-Wang-Xia2018,Weng-Xia2022} for the similar justification of this fact along the flow \eqref{inverse-harmonic-mean-curvature-flow}.

Therefore, we conclude that along the flow \eqref{s3:GL-flow}, the solution $\Sigma_t$ converges smoothly to the unique spherical cap $C_{\t,r_\infty}(e)$. This completes the proof of Theorem \ref{thm-con}.

\subsection{Proof of Theorem \ref{co-af}}
Firstly, we assume that $\Sigma$ is strictly convex, then by Proposition \ref{s2:prop-estimate-convex}, there exists a constant vector $e\in \mathbb S^n$ such that $\langle X_e,\nu\rangle>0$ holds everywhere on $\Sigma$. Start the flow \eqref{s3:GL-flow} from $\Sigma$. By Theorem \ref{thm-con}, the solution $\Sigma_t$ is strictly convex and converges smoothly to a spherical cap $C_{\t,r_\infty}(e)$ as $t\ra \infty$. By the monotonicity in Proposition \ref{prop-mono}, we have
\begin{align*}
 W_{0,\theta}(\widehat{\Sigma})=&W_{0,\theta}(\widehat{C_{\theta,r_{\infty}}}(e))=f_0(r_\infty) \\
 W_{k,\theta}(\widehat{\Sigma})\geq & W_{k,\theta}(\widehat{C_{\theta,r_{\infty}}}(e))=f_k(r_\infty).
\end{align*}
Since $f_k$ is strictly increasing, we can rewrite the above two equations as
	\begin{align}\label{s5:final-ineq}
	W_{k,\theta}(\widehat{\Sigma}) \geq f_k\circ f_0^{-1}\left(W_{0,\theta}(\widehat{\Sigma})\right).
	\end{align}
If equality holds in \eqref{s5:final-ineq}, then all $\Sigma_t$ are spherical caps with $\t$-capillary boundary for $t>0$ and in particular the initial hypersurface $\Sigma$ is also a spherical cap with $\t$-capillary boundary.

When $\Sigma$ is convex, we may assume that it is not a flat ball, otherwise the statement is trivially true.  By Theorem \ref{thm.app}, we can approximate $\Sigma$ by a family of strictly convex hypersurfaces $\Sigma_{\varepsilon}$ as $\varepsilon\to 0$. Then the inequality \eqref{eq-AF} follows by approximation. The equality characterization can be proved by using an argument of \cite{GL09} and noting the fact that $\Sigma$ has an interior strictly convex point.

\subsection{Further question}

To establish the Alexandrov-Fenchel type inequalities comparing quermassintegrals $W_{k,\theta}$ and $W_{\ell,\theta}$ for $k>\ell$, it's also natural to study the following fully nonlinear locally constrained curvature flow
\begin{equation}\label{s5.GLflow}
\left\{\begin{aligned}
\left(\partial_t x\right)^{\bot}&=\Big(\langle x+\cos\theta\nu,e\rangle-\frac{H_k}{H_{k-1}}\langle X_e,\nu\rangle\Big)\nu &\text{in}\quad \bar{\mathbb{B}}^n \times[0,T),\\
\langle\bar{N}\circ x,\nu\rangle&=-\cos\theta  &\text{on}\quad \partial\bar{\mathbb{B}}^n \times[0,T),\\
x(\cdot,0)&=x_0(\cdot)  &\text{on} \quad \bar{\mathbb{B}}^n,
\end{aligned}\right.
\end{equation}
for $k=2,\cdots, n$. Along the flow \eqref{s5.GLflow}, $W_{k-1,\theta}$ is preserved while $W_{k,\theta}$ is decreasing. Using the maximum principle, the two-sided positive bounds on the curvature function $H_k/H_{k-1}$ can be proved. Furthermore, we can prove that the strict convexity is preserved along the flow \eqref{s5.GLflow} by using the similar argument as in \S \ref{sec:5}. The curvature upper bound is not yet available. However, we still expect that this is true and then this would give the smooth convergence of the flow to a spherical cap.

\appendix
\section{Approximation result}
In this appendix, we prove that a convex hypersurface $\Sigma$ with $\theta$-capillary boundary in the unit ball can be approximated by a sequence of strictly convex hypersurfaces with $\theta$-capillary boundary in the unit ball in the $C^{2,\alpha}$ sense. The free boundary case (i.e. $\theta=\frac{\pi}{2}$) has been treated by Lambert and Scheuer in \cite{Lambert-Scheuer2017}. We adapt their idea and include a proof for the capillary case.

Firstly, we show the following lemma which could be considered as a natural generalization of \cite[Lemma 3.1]{Lambert-Scheuer2017}.
\begin{lem}\label{s2:prop-weak-convex-rigidity}
	Let $\Sigma\subset\bar{\mathbb{B}}^{n+1}$ be a convex hypersurface with $\theta$-capillary boundary for $\theta\in(0,\frac{\pi}{2}]$. Then there exists a point $e\in \operatorname{int}(\widehat{\partial\Sigma})$, such that $\widehat{\partial \Sigma}\subset \widehat{\partial C_{\theta,\infty}}(e)$. Moreover, either $\partial\Sigma=\partial C_{\theta,\infty}(e)$,  or $\partial\Sigma\subset \operatorname{int}(\widehat{\partial C_{\theta,\infty}}(e))$ and in this case there exists an interior point in $\Sigma$ such that $h_{ij}>0$ at this point.
\end{lem}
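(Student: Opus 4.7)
Through the identity $\widehat h_{\alpha\beta}=\sin^{-1}\theta\,(h_{\alpha\beta}+\cos\theta\,\delta_{\alpha\beta})$ from Proposition \ref{boundary-h-property}(2), the Euclidean convexity $h\ge 0$ of $\Sigma$ in $\mathbb R^{n+1}$ translates into the intrinsic lower bound $\widehat h_{\alpha\beta}\ge\cot\theta\,\delta_{\alpha\beta}$ on $\partial\Sigma$ viewed as a closed convex hypersurface of $\mathbb S^n$. Since $\partial C_{\theta,\infty}(e)\cap\mathbb S^n$ is precisely the model geodesic $(n-1)$-sphere in $\mathbb S^n$ of intrinsic radius $\theta$, whose constant principal curvature is exactly $\cot\theta$, this is precisely the Blaschke-rolling bound that should enclose $\widehat{\partial\Sigma}$ in a spherical ball of radius $\theta$. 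The plan is to adapt to general $\theta\in(0,\tfrac{\pi}{2}]$ the hemisphere-enclosure scheme that Lambert--Scheuer \cite{Lambert-Scheuer2017} used in the free-boundary case $\theta=\tfrac{\pi}{2}$.

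\textbf{Producing $e$.} I would take $e$ to be a spherical circumcenter of $\widehat{\partial\Sigma}$, namely a minimizer in $\mathbb S^n$ of $\rho(\cdot)=\sup_{y\in\widehat{\partial\Sigma}} d_{\mathbb S^n}(\cdot,y)$, which exists by compactness. At any extremal point $p\in\partial\Sigma$ realizing the supremum, the geodesic $(n-1)$-sphere of radius $\rho(e)$ about $e$ is externally tangent to $\partial\Sigma$ at $p$, so comparing principal curvatures there gives $\cot\rho(e)\ge\widehat h(p)\ge\cot\theta$; hence $\rho(e)\le\theta$ and $\widehat{\partial\Sigma}\subset\widehat{\partial C_{\theta,\infty}(e)}$. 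The strict positivity $\widehat h\ge\cot\theta>0$ then forces $e\in\operatorname{int}(\widehat{\partial\Sigma})$: were $e$ on $\partial\Sigma$, an envelope-theorem first variation of $\rho$ along the inward conormal $-\bar\nu$ would strictly decrease $\rho$, because the initial direction of the geodesic from $e$ to any extremal $p^*\in\partial\Sigma$ points into $\widehat{\partial\Sigma}$, contradicting minimality of $\rho(e)$.

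\textbf{Dichotomy and strict convexity point.} If $\rho(e)=\theta$, then $\partial\Sigma$ and $\partial C_{\theta,\infty}(e)\cap\mathbb S^n$ are internally tangent at the extremal $p$ with the curvature inequality $\widehat h\ge\cot\theta$ saturated there; the strong maximum principle applied to the signed spherical distance $d_{\mathbb S^n}(e,\cdot)-\theta$ along $\partial\Sigma$ then propagates the equality globally and yields $\partial\Sigma=\partial C_{\theta,\infty}(e)$. Otherwise $\rho(e)<\theta$ gives the strict inclusion, and I would argue by contradiction for the existence of an interior strictly convex point: if $\det(h_{ij})\equiv 0$ on $\operatorname{int}\Sigma$, then by the standard structure theorem for convex hypersurfaces with vanishing Gauss--Kronecker curvature (Sacksteder / Hartman--Nirenberg) the hypersurface $\Sigma$ is ruled by straight line segments extending from $\partial\Sigma$ to $\partial\Sigma$; the $\theta$-capillary angle condition at both endpoints of each ruling, together with the convexity of $\widehat\Sigma$, would force all rulings to lie in a common hyperplane of the form $\{\langle\cdot,e'\rangle=\cos\theta\}$, so $\Sigma$ would coincide with the flat ball $C_{\theta,\infty}(e')$ and $\partial\Sigma$ would equal $\partial C_{\theta,\infty}(e')$, contradicting the strict inclusion.

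\textbf{Main obstacle.} The delicate steps are the two rigidity propagations. Passing from the single tangent contact $\rho(e)=\theta$ at $p$ to the global identification $\partial\Sigma=\partial C_{\theta,\infty}(e)$ requires the strong maximum principle for the fully nonlinear elliptic equation satisfied by $\partial\Sigma$ as a convex hypersurface in $\mathbb S^n$ compared against the model radius-$\theta$ geodesic sphere; and going from a ruling structure on $\Sigma$ to a common hyperplane requires excluding that the rulings might rotate as their base point sweeps $\partial\Sigma$, which is the geometric heart of why the capillary condition pins down the flat ball.
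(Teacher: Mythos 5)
Your overall strategy --- translate convexity of $\Sigma$ into $\widehat h_{\alpha\beta}\ge\cot\theta\,\delta_{\alpha\beta}$ on $\partial\Sigma$, then trap $\widehat{\partial\Sigma}$ in a geodesic ball of radius $\theta$ via a circumradius argument --- agrees with the paper's, and the dichotomy you target is the right one. However, all three sub-steps diverge from the paper and each one has a genuine gap.

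\textbf{Locating $e$ in the interior.} Your envelope-theorem argument (move $e$ along $-\bar\nu$ and claim the one-sided derivative of $\rho$ is strictly negative) fails when the set of extremal points realising $\rho(e)$ contains an antipodal pair $q_1,q_2$ on $S_{\rho(e)}(e)$: then $\gamma_{q_1}'(0)=-\gamma_{q_2}'(0)$, and for \emph{no} direction $v$ do both inner products $\langle\gamma_{q_i}'(0),v\rangle$ have the same sign, so the one-sided derivative of $\rho$ at $e$ is $\ge 0$ in every direction and no contradiction arises. This is precisely the case the paper singles out: there $e$ is the midpoint of the minimizing geodesic $\gamma$ between $q_1,q_2$, which lies in $\widehat{\partial\Sigma}$ by convexity, and the possibility $e\in\partial\Sigma$ is excluded by a separate circumscribed-sphere argument using Ando's Theorem 1.2, showing it would force $\theta=\pi/2$. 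The paper's Case 2 (no antipodal pair) likewise gives $e$ in the interior of a spherical triangle spanned by three extremal points. You need something like this case split.

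\textbf{The dichotomy.} You assert a strong-maximum-principle propagation from the single tangency point when $\rho(e)=\theta$, but you do not verify the differential inequality for the graph difference, nor the sign needed for a Hopf argument, nor the continuation from a neighbourhood of $p$ to all of $\partial\Sigma$; this is a nontrivial fully nonlinear comparison that is left unproved. The paper's route is entirely elementary: for any $q_1,q_2\in\partial\Sigma$ with distinct tangent circumscribed spheres $\bar B_\theta(p_1)\neq\bar B_\theta(p_2)$, a direct spherical Law-of-Cosines computation shows $\bar B_\theta(p_1)\cap\bar B_\theta(p_2)$ already lies in a geodesic ball of radius strictly less than $\theta$, so $r<\theta$; conversely $r=\theta$ forces all circumscribed spheres to coincide, and hence $\partial\Sigma=\partial C_{\theta,\infty}(e)$. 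This replaces your PDE argument by an explicit estimate.

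\textbf{The interior strictly convex point.} Your appeal to the Sacksteder/Hartman--Nirenberg structure theorem for vanishing Gauss--Kronecker curvature is problematic: those rigidity statements are formulated for complete hypersurfaces or closed hypersurfaces without boundary, and extending them to a compact convex hypersurface with $\theta$-capillary boundary is not routine. Even granting a ruling, your claim that the capillary condition at both endpoints forces all rulings into a single hyperplane of the form $\{\langle\cdot,e'\rangle=\cos\theta\}$ is precisely the part you flag as the ``geometric heart'' and do not prove. The paper sidesteps all of this with a short computation: since $r<\theta$, on $\partial\Sigma$ one has $\langle x,e\rangle\ge\cos r$ and $0\ge\langle\bar\nu,e\rangle\ge-\sin r$, so the decomposition $\mu=\sin\theta\,(\bar N\circ x)+\cos\theta\,\bar\nu$ from \eqref{normal transform} yields $D_\mu\langle x,e\rangle\ge\sin(\theta-r)>0$ along $\partial\Sigma$; hence $\langle x,e\rangle$ attains a strict interior minimum, and attaching a large supporting sphere from below at that point produces a point with $h_{ij}>0$. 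I recommend replacing the structure-theorem argument with this boundary-derivative computation, which is both shorter and rigorous in the setting with boundary.
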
	
\begin{proof}
The case $\theta=\frac{\pi}{2}$ is treated in \cite[Lemma 3.1]{Lambert-Scheuer2017}. In the following, we assume $\theta\in (0,\frac{\pi}{2})$.

\textbf{\underline{Step 1.}} We first show that there exists $e\in \operatorname{int}(\widehat{\partial\Sigma})$ such that $\widehat{\partial \Sigma} \subset \widehat{\partial C_{\theta,\infty}}(e)$.

By Proposition \ref{boundary-h-property} and $\Sigma$ is convex, one has $\widehat{h}_{\a\b} \geq \cot \t \d_{\a\b}>0$ on $\partial\Sigma$. By \cite[Theorem 1.2]{NAndo}, we know that $\widehat{\partial \Sigma} \subset \bigcap_{q\in \partial \Sigma}\widehat{\mathcal{S}_{q,\t}}$, where $\mathcal{S}_{q,\t}$ is the circumscribed sphere of $\partial \Sigma$ in $\mathbb S^n$ with radius $\t$ such that it touches $\partial\Sigma$ tangentially at $q$. On the other hand, by the convexity of $\partial \Sigma$, we have $\widehat{\partial\Sigma}=\bigcap_{q\in \partial\Sigma}\widehat{\mathcal{S}_{q,\frac{\pi}{2}}}$ which contains  $\bigcap_{q\in \partial \Sigma}\widehat{\mathcal{S}_{q,\t}}$. Therefore, we have $\widehat{\partial \Sigma}=\bigcap_{q\in \partial \Sigma}\widehat{\mathcal{S}_{q,\t}}$.
	
	Let $\-B_r(e)\subset \mathbb S^n$ be the closed geodesic ball with the smallest radius $r$ and center $e$ such that $\widehat{\partial \Sigma}\subset \-B_r(e)$. It is obvious that $r\leq \t$. We show that $e\in \operatorname{int}(\widehat{\partial\Sigma})$. Let $C=S_r(e)\cap \partial \Sigma$, where $S_r(e)=\partial B_r(e)$. The minimality of $r$ implies that $C$ must support $\-B_r(e)$: for each hyperplane of $\mathbb{S}^n$ which passes through $e$, $S_r(e)$ is cutted into two hemispheres $S_r(e)=S_1\cup S_2$ and must satisfes $\bar{S_i}\cap C\neq\emptyset,\ i=1,2.$ This leads to the following two cases.
	
	{ Case 1}: If two points $q_1,q_2$ of $C$ are antipodal on $S_r(e)$, then $e$ is the midpoint of the minimal geodesic $\g$ joining $q_1$ and $q_2$. Then $e\in \g \subset \widehat{\partial \Sigma}$ by convexity. If $e\in \partial \Sigma$, then $\g \subset \widehat{\mathcal{S}_{e,\t}}$ implies that $\t=\frac{\pi}{2}$ and $\g\subset \partial \Sigma$ by the convexity of $\partial \Sigma$, contradicting with $\theta<\frac{\pi}2$. Therefore, we have $e\in \operatorname{int}(\widehat{\partial \Sigma})$.
	
	{ Case 2}: If $C$ contains no pair of antipodal points of $S_r(e)$, since $C$ support $B_r(e)$, then $C$ contains at least three points $q_1,q_2,q_3$ which would not lie simultaneously in any hemisphere of $S_r(e)$. Then the convex hull of $q_1,q_2,q_3$, which is a spherical triangle, would not be contained in any half ball of $B_r(e)$. Then we must have $e\in \operatorname{int}(\operatorname{conv}(q_1,q_2,q_3)) \subset  \operatorname{int}(\widehat{\partial \Sigma})$.
	
	\textbf{\underline{Step 2.}} We next show that either $\partial \Sigma=\partial C_{\theta,\infty}(e)$ or $\partial\Sigma\subset \operatorname{int}(\widehat{\partial C_{\theta,\infty}}(e))$.

For any $q_1,q_2 \in \partial\Sigma$, we take $p_1,p_2 \in \mathbb S^n$ such that $\widehat{\mathcal{S}_{q_i,\t}}=\-B_{\t}(p_i)$ for $i=1,2$. If $p_1 \neq p_2$, then
	$$
	\widehat{\partial \Sigma}\subset \-B_{\t}(p_1)\cap \-B_{\t}(p_2).
	$$
	Let $o$ be the midpoint of the geodesic between $p_1$ and $p_2$. For any $z\in \partial(\-B_\t(p_1)\cap \-B_\t(p_2))$, we draw the geodesics from the point $z$ to the points $o$, $p_1$ and $p_2$, respectively. Denote by $\angle zop_i$ the angle of the spherical triangle $\D zop_i$ at the vertex $o$ for $i=1,2$, respectively. Without loss of generality, we assume that $z\in \partial B_{\t}(p_1)$, then $\angle zop_1\geq \pi/2$, see Figure \ref{fig2}.
	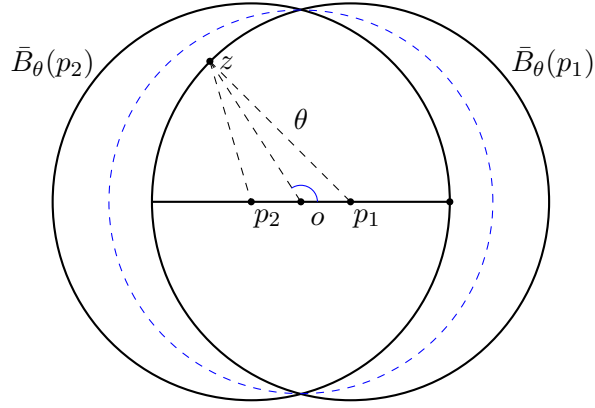
\begin{figure}[htbp]
		\begin{tikzpicture}[scale=1.1]
		\draw[thick] (0.6,0) arc (0:360:2.4);
		\draw[thick] (-0.6,0) arc (0:360:2.4);
		\draw[thick] (-0.6,0)--(-4.2,0);
		\draw[dashed,blue] (-2.4,0) circle(2.32);
		\filldraw[thick](-0.6,0) circle(0.03);
		\filldraw[thick](-1.8,0) circle(0.03);
		\node at (-1.6,-0.2) {$p_1$};
		\filldraw[thick](-3,0) circle(0.03);
		\node at (-2.8,-0.2) {$p_2$};
		\node at (-2.4,1) {$\t$};
		\filldraw[thick](-2.4,0) circle(0.03);
		\node at (-2.2,-0.2) {$o$};
		\draw[thin,blue] (-2.2,-0) arc (0:125:0.2);
		
		\filldraw[thick](-3.5,1.7) circle(0.03);
		\node at (-3.3,1.68) {$z$};
		\node at (-5.4,1.68) {$\bar{B}_{\t}(p_2)$};
		\node at (0.65,1.68) {$\bar{B}_{\t}(p_1)$};
		
		\draw[dashed] (-3.5,1.7)--(-2.4,0);
		\draw[dashed] (-3.5,1.7)--(-1.8,0);
		\draw[dashed] (-3.5,1.7)--(-3,0);
		
		\end{tikzpicture}
		\caption{The intersection of $\bar{B}_\t(p_1)$ and $\bar{B}_\t(p_2)$}\label{fig2}
	\end{figure}
	
	Let $d^{\mathbb S}$ be the distance function on $\mathbb S^n$. By the Law of Cosines on the sphere \cite{Whittlesey}, we have
	\begin{align*}
	\cos d^{\mathbb S}(z,o)\cos d^{\mathbb S}(o,p_1)+\cos\angle zo p_1 \sin d^{\mathbb S}(z,o)\sin d^{\mathbb S}(o,p_1)=\cos d^{\mathbb S}(z,p_1).
	\end{align*}
	Then we get
	\begin{align*}
	\cos d^{\mathbb S}(z,o)>&\cos d^{\mathbb S}(z,o)\cos d^{\mathbb S}(o,p_1)\\
	\geq &\cos d^{\mathbb S}(z,p_1)=\cos \t,
	\end{align*}
	and hence $d^{\mathbb S}(z,o)<\t$. Thus, the minimality of $\-B_r(e)$ implies that either $r=\t$ or $r<\t$.
	
	In the latter case, we have $\widehat{\partial\Sigma}\subset \-B_r(e)$ with $r<\t$. Then we have $\langle x,e\rangle \geq \cos r$ and $0\geq \langle \bar{\nu},e\rangle\geq -\sin r$ on $\partial \-{\mathbb B}^n$. Using the relation \eqref{normal transform}, we obtain
	\begin{align*}
	D_\mu\langle x,e\rangle=\langle \mu,e \rangle=&\sin\t \langle \-N\circ x,e\rangle+\cos \t \langle \-\nu,e\rangle \\
	\geq &\sin\t\cos r-\cos\t\sin r\\
	=&\sin(\t-r)>0,\quad \text{on $\partial\-{\mathbb B}^n$}.
	\end{align*}
	Then the strict minimum of $\langle x,e\rangle$ is attained at some interior point in $\mathbb B^n$. By attaching a large supporting sphere to $\Sigma$ from below, we obtain an interior strictly convex point in $\mathbb B^n$.
\end{proof}

Let $\Sigma\subset\bar{\mathbb{B}}^{n+1}$ be a convex hypersurface with capillary boundary supported on $\mathbb{S}^n$ at a contact angle $\theta\in(0,\frac{\pi}{2}]$, which is given by the embedding $x_0:\bar{\mathbb{B}}^{n}\to\Sigma\subset\bar{\mathbb{B}}^{n+1}$. We consider the mean curvature flow:
\begin{equation}\label{flow-mean}
	\left\{\begin{aligned}
		\partial_t x&=-H\nu+V \qquad \text{in}\quad \bar{\mathbb{B}}^n \times[0,T),\\
		\langle\bar{N}\circ x,\nu\rangle&=-\cos\theta \qquad \text{on}\quad \partial\bar{\mathbb{B}}^n \times[0,T),\\
		x(\cdot,0)&=x_0(\cdot) \qquad \text{on} \quad \bar{\mathbb{B}}^n,
	\end{aligned}\right.
\end{equation}
and denote $\Sigma_t=x(\bar{\mathbb{B}}^n,t)$, where $V$ is the tangential component of $\partial_tx$ and  satisfies $V\big|_{\partial\Sigma_t}=-H\cot\theta\mu$.

The short time existence of the flow \eqref{flow-mean} can be obtained by a similar argument as in \cite{Stahl1996-2} by Stahl.
\begin{thm}\label{short time}
	For any $\alpha\in(0,1)$, the flow \eqref{flow-mean} admits a unique solution:
	\begin{equation*}
		x(\cdot,t)\in C^{\infty}(\bar{\mathbb{B}}^n\times(0,\delta])\cap C^{2+\alpha,1+\frac{\alpha}{2}}(\bar{\mathbb{B}}^n\times[0,\delta]),
	\end{equation*}
	where $\delta>0$ is a small constant.
\end{thm}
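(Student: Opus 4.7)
I would follow the framework of Stahl \cite{Stahl1996-2} for the free-boundary case and reduce \eqref{flow-mean} to a scalar quasilinear parabolic equation on the fixed domain $\bar{\mathbb{B}}^n$, together with an oblique boundary condition, then invoke classical short-time existence theory.

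\textbf{Step 1 (adapted graph parametrization).} First I would construct a smooth vector field $\xi$ in a tubular neighborhood of $\Sigma$ in $\mathbb{R}^{n+1}$ such that $\xi|_\Sigma=\nu$ on the interior of $\Sigma$ while $\xi|_{\partial\Sigma}$ is tangent to $\mathbb{S}^n$. The key observation is that $\bar\nu=\cos\theta\,\mu+\sin\theta\,\nu$ (from \eqref{normal transform}) is tangent to $\mathbb{S}^n$ along $\partial\Sigma$; hence a smooth interpolation of the form $\xi = \nu+\chi(d_{\partial\Sigma})\cot\theta\,\mu$, with $d_{\partial\Sigma}$ a boundary defining function on $\Sigma$ and $\chi$ a cutoff equal to $1$ near $0$, satisfies $\xi|_{\partial\Sigma}=\bar\nu/\sin\theta$. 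Extending $\xi$ off $\Sigma$ by parallel transport along its own flow lines $\Psi(q,s)$, nearby hypersurfaces are parametrized as $X(p,t)=\Psi(x_0(p),u(p,t))$ for a scalar graph function $u\colon \bar{\mathbb{B}}^n\times[0,\delta]\to\mathbb{R}$ with $u(\cdot,0)\equiv 0$. By construction, if $p\in\partial\bar{\mathbb{B}}^n$ then $X(p,t)\in\mathbb{S}^n$ for all small $u$.

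\textbf{Step 2 (reduction to a scalar PDE).} The tangential component $V$ in \eqref{flow-mean} can be absorbed by a time-dependent diffeomorphism of $\bar{\mathbb{B}}^n$, so the geometric content of \eqref{flow-mean} is the normal velocity $-H$. In the graph variable this yields
\begin{equation*}
\partial_t u \;=\; \frac{-H[u]}{\langle \xi,\nu[u]\rangle} \;=:\; F(p,u,\nabla u,\nabla^2 u),
\end{equation*}
where $\nu[u]$ and $H[u]$ are the normal and mean curvature of the graph. A direct computation shows that the linearization of $F$ at $u=0$ has principal symbol equal to the Laplace--Beltrami operator on $(\Sigma,g)$, so $F$ is uniformly parabolic. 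Similarly, the capillary condition $\langle \bar N\circ X,\nu[u]\rangle=-\cos\theta$ becomes a nonlinear first-order constraint $G(p,u,\nabla u)=0$ on $\partial\bar{\mathbb{B}}^n\times[0,\delta]$.

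\textbf{Step 3 (obliqueness and short-time existence).} Differentiating $G$ at $u=0$ in the direction of $\nabla_\mu u$, the coefficient equals $\sin\theta>0$ since $\theta\in(0,\pi/2]$, so the boundary operator is uniformly oblique and the Lopatinskii--Shapiro complementary condition for the linearized oblique parabolic problem is immediate. Applying the classical short-time existence theory for quasilinear parabolic equations with uniformly oblique boundary condition \cite[Theorem 7.4]{Ural1991} (see also \cite[\S 13]{Lieb96}) yields a unique solution $u\in C^{2+\alpha,1+\alpha/2}(\bar{\mathbb{B}}^n\times[0,\delta])$. Standard parabolic Schauder bootstrapping then upgrades $u$ (and hence $x$) to $C^\infty$ on $\bar{\mathbb{B}}^n\times(0,\delta]$, and uniqueness follows from the parabolic maximum principle applied to the difference of two solutions.

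\textbf{Main obstacle.} The genuine bookkeeping step is Step~1: producing a parametrizing field $\xi$ whose flow preserves $\mathbb{S}^n$ exactly (not merely to first order), so that the scalar problem is posed on the fixed cylinder $\bar{\mathbb{B}}^n\times[0,\delta]$ with a Dirichlet-free, purely oblique boundary condition. Stahl handled this in the orthogonal case $\theta=\pi/2$ using an exponential map adapted to $\mathbb{S}^n$; here the same device applies once $\nu$ is rotated by the angle $\theta$ near $\partial\Sigma$ to realign it with $\bar\nu$, as in the construction above. All remaining steps are then standard parabolic theory.
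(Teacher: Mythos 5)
The paper gives no proof of this theorem; it simply cites Stahl. Your proposal follows exactly the Stahl-type graph parametrization that the paper points to, and Steps~2 and~3 (reduction to a quasilinear scalar PDE, computation of the principal symbol, obliqueness with coefficient $\sin\theta>0$, invocation of Lady\v{z}enskaja--Solonnikov--Ural'tseva, bootstrapping, uniqueness) are the standard and correct components of that argument.

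There is, however, a genuine gap in Step~1, and it is precisely the one you already flag as the ``main obstacle'': the extension of $\xi$ off $\Sigma$ ``by parallel transport along its own flow lines'' forces $\xi$ to be constant along each flow line, so the flow lines $\Psi(q,s)$ are Euclidean straight lines. A straight line emanating from $q\in\partial\Sigma\subset\mathbb{S}^n$ in the direction $\bar\nu(q)/\sin\theta$ is tangent to $\mathbb{S}^n$ at $q$ but leaves $\mathbb{S}^n$ immediately, so $X(p,t)=\Psi(x_0(p),u(p,t))$ does \emph{not} keep the boundary on $\mathbb{S}^n$, and the reduction to a pure oblique problem on the fixed cylinder $\bar{\mathbb{B}}^n\times[0,\delta]$ breaks down. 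What is actually required is a field $\xi$ defined on a tubular neighborhood of $\Sigma$ in $\bar{\mathbb{B}}^{n+1}$ that is (i) transverse to $\Sigma$, with $\langle\xi,\nu\rangle>0$, and (ii) tangent to $\mathbb{S}^n$ at \emph{every} point of $\mathbb{S}^n$ in that neighborhood, not merely along $\partial\Sigma$. Then the flow of $\xi$ preserves $\mathbb{S}^n$ by ODE uniqueness and the scalar problem is posed on the fixed cylinder. Such a $\xi$ is easy to build: first extend $\bar\nu/\sin\theta$ to a unit field tangent to $\mathbb{S}^n$ in a collar of $\partial\Sigma$ inside $\mathbb{S}^n$, then interpolate with $\nu$ on $\Sigma$ using a cutoff, and finally extend to the tubular neighborhood using a foliation of a collar of $\mathbb{S}^n$ in $\bar{\mathbb{B}}^{n+1}$ by concentric spheres $\{|x|=r\}$ so that $\xi$ stays tangent to each leaf. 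With that replacement Step~1 is sound, and the rest of your argument goes through. (An alternative route, avoiding the field construction entirely, is the one the paper uses for the main flow: push forward to $\bar{\mathbb{R}}^{n+1}_+$ by the M\"obius map \eqref{s6:conform-transform} and write the flow as a radial graph over $\bar{\mathbb{S}}^n_+$, where straightness preserves the flat boundary automatically.)
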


We prove the following approximation result:
\begin{thm}\label{thm.app}
Let $\Sigma\subset\bar{\mathbb{B}}^{n+1}$ be a convex hypersurface with capillary boundary supported on $\mathbb{S}^n$ at a contact angle $\theta\in(0,\frac{\pi}{2}]$.	Suppose that $\Sigma_t$, $t\in [0,\delta)$ is a solution to the flow \eqref{flow-mean} starting from $\Sigma$. Then either $\Sigma$ is a flat ball $C_{\theta,\infty}$, or $\Sigma_t$ is strictly convex for all time $t>0$ as long as the flow exists.
\end{thm}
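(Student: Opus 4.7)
The strategy mirrors the free boundary case of \cite{Lambert-Scheuer2017}: first use the tensor maximum principle (Theorem \ref{s1:thm-max principle}) to preserve $h_{ij}\geq 0$ along \eqref{flow-mean}, and then combine Hamilton's strong maximum principle for tensors (up to the capillary boundary) with the rigidity Lemma \ref{s2:prop-weak-convex-rigidity} to obtain the stated dichotomy.

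For the weak step, Proposition \ref{s3:lem-evol-general-speed} with $f=-H$ and the Simons identity give
\begin{equation*}
\partial_t h_{ij}=\Delta h_{ij}+|A|^2 h_{ij}-2H(h^2)_{ij}+\nabla_V h_{ij}+h_j^k\nabla_i V_k+h_i^k\nabla_j V_k.
\end{equation*}
Taking $S_{ij}=h_{ij}$, at any interior null direction $\xi$ with $h_{ij}\xi^j=0$ one has $(h^2)_{ij}\xi^j=0$ and $h_j^k\xi^j=0$, so every zeroth-order term vanishes on $\xi\otimes\xi$ and condition \eqref{conditon-MP} is satisfied with $\Gamma=0$.

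To verify the boundary condition \eqref{eq-bou}, work in the adapted frame $\{\mu,e_1,\dots,e_{n-1}\}$ of Proposition \ref{boundary-h-property}, chosen to diagonalize $h|_{T\partial\Sigma}$ (which by item (3) simultaneously diagonalizes $\tilde h$). If $\xi=e_\alpha$ with $h_{\alpha\alpha}=0$, Proposition \ref{boundary-h-property}(4) yields
\begin{equation*}
\nabla_\mu h_{\alpha\alpha}=\tilde h_{\alpha\alpha}\,h_{\mu\mu}=\tfrac{1}{\sin\theta}\,h_{\mu\mu}\geq 0,
\end{equation*}
using convexity $h_{\mu\mu}\geq 0$. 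If $\xi=\mu$ with $h_{\mu\mu}=0$, then the oblique identity \eqref{s2:bdry-condition-general} with $f=-H$ specializes to $\nabla_\mu H=\tfrac{H}{\sin\theta}$, and Proposition \ref{boundary-h-property}(4) gives $\nabla_\mu h_{\alpha\alpha}=-\tilde h_{\alpha\alpha}h_{\alpha\alpha}$, so that
\begin{equation*}
\nabla_\mu h_{\mu\mu}=\nabla_\mu H-\sum_\alpha \nabla_\mu h_{\alpha\alpha}=\tfrac{H}{\sin\theta}+\sum_\alpha \tilde h_{\alpha\alpha}h_{\alpha\alpha}\geq 0,
\end{equation*}
since $H=\sum_\alpha h_{\alpha\alpha}\geq 0$ and $\tilde h_{\alpha\alpha}=\cot\theta\,h_{\alpha\alpha}+\tfrac{1}{\sin\theta}>0$ for $\theta\in(0,\tfrac{\pi}{2}]$. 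Thus Theorem \ref{s1:thm-max principle} applies and $h_{ij}\geq 0$ is preserved.

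For the final step, the evolution equation together with the Hopf-type inequalities just established lets one invoke Hamilton's strong maximum principle for tensors up to the capillary boundary, exactly as in \cite{Lambert-Scheuer2017} for $\theta=\tfrac{\pi}{2}$. This yields that for each $t>0$, either $h_{ij}>0$ everywhere on $\bar\Sigma_t$, or $h_{ij}$ admits a null eigenvector at every point. Now assume $\Sigma$ is not the flat ball $C_{\theta,\infty}$. By Lemma \ref{s2:prop-weak-convex-rigidity} there is an interior point $p^\ast\in\operatorname{int}(\Sigma)$ with $h_{ij}(p^\ast,0)>0$, and by continuity of the flow $h_{ij}(p^\ast,t)>0$ for small $t>0$, ruling out the degenerate alternative and forcing strict convexity of $\Sigma_t$ for all $t>0$. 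The main technical point is transporting the strong maximum principle across the capillary boundary; once the Hopf-type boundary computations above are in hand, this reduces to the standard argument in \cite{Lambert-Scheuer2017}.
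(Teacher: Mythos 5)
Your weak-step verification is sound and essentially matches the calculation the paper would use (the evolution of $h_{ij}$ under $f=-H$, the reaction term vanishing on null directions, and the boundary Hopf-type inequalities from Proposition~\ref{boundary-h-property} together with the compatibility condition~\eqref{s2:bdry-condition-general}). However, your strong step has a genuine gap, and it is precisely the point the paper is careful to avoid.

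You invoke ``Hamilton's strong maximum principle for tensors up to the capillary boundary'' to obtain a global dichotomy: either $h_{ij}>0$ everywhere or $h_{ij}$ admits a null eigenvector at every point of $\bar{\Sigma}_t$. Hamilton's tensor strong maximum principle is established for closed manifolds (the null space of $h_{ij}$ is a smooth, parallel, time-invariant distribution in the \emph{interior}); extending it up to a boundary with the oblique/Neumann-type conditions in~\eqref{eq-bou} is a nontrivial technical statement that is neither standard nor proved here, and parallel transport across a non-geodesic boundary does not automatically propagate a null direction. Your assertion that this ``reduces to the standard argument in \cite{Lambert-Scheuer2017}'' is misleading: the paper (following Lambert--Scheuer) deliberately \emph{circumvents} the tensor strong maximum principle by solving the scalar heat equation~\eqref{flow-heat} with Neumann data for an auxiliary function $\phi$, pinning $0\leq \phi_0\leq \chi(\cdot,0)$ with $\phi_0>0$ at an interior point, invoking only the \emph{scalar} strong maximum principle (where boundary-point/Hopf versions are available) to get $\phi>0$ for $t>0$, and then preserving $M_{ij}=h_{ij}-\phi g_{ij}\geq 0$ by the \emph{weak} tensor maximum principle. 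That is what makes the argument clean; your version has a hole exactly where the paper substitutes the scalar comparison trick.

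There is a second, smaller gap. You cite Lemma~\ref{s2:prop-weak-convex-rigidity} to get an interior strictly convex point $p^\ast$, but that lemma only guarantees such a point in the case $\partial\Sigma\subset \operatorname{int}(\widehat{\partial C_{\theta,\infty}}(e))$. In the other case $\partial\Sigma=\partial C_{\theta,\infty}(e)$ with $\Sigma\neq C_{\theta,\infty}(e)$, one must first argue (as the paper does, using items (2) and (4) of Proposition~\ref{boundary-h-property}) that $h_{\alpha\beta}\equiv 0$ and $h_{\mu\mu}\equiv 0$ on $\partial\Sigma$, and only then deduce that a nonflat convex cap must carry an interior strictly convex point. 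Finally, your passage from ``strict convexity for small $t>0$'' to ``for all $t>0$'' needs the additional weak-maximum-principle argument with $\tilde M_{ij}=h_{ij}-\varepsilon g_{ij}$ starting from any later time slice $t_0$; this is easy and the paper does it, but it should be said rather than elided.
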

\begin{proof}
By Lemma \ref{s2:prop-weak-convex-rigidity}, there exists a point $e\in \operatorname{int}(\widehat{\partial\Sigma})$ such that $\widehat{\partial \Sigma}\subset \widehat{\partial C_{\theta,\infty}}(e)$, and either $\partial\Sigma=\partial C_{\theta,\infty}(e)$ or $\partial\Sigma\subset \operatorname{int}(\widehat{\partial C_{\theta,\infty}}(e))$. 	

For the first case $\partial\Sigma=\partial C_{\theta,\infty}(e)$, by item (2) of Proposition \ref{boundary-h-property}, we have $h_{\alpha\beta}=0$ for $\alpha,\beta=1,\cdots,n-1$ on the boundary $\partial\Sigma$. If $h_{\mu\mu}>0$ on $\partial\Sigma$, then item (4) of Proposition \ref{boundary-h-property} implies that $\nabla_\mu h_{\alpha\beta}>0$ on $\partial\Sigma$ and then $h_{\alpha\beta}<0$ at some interior points sufficiently close to the boundary $\partial\Sigma$. So we must have $h_{\mu\mu}=0$ on $\partial\Sigma$.  This implies that if $\Sigma$ is not $C_{\theta,\infty}(e)$, then there exists an interior point where the second fundamental form is positive definite. For the second case $\partial\Sigma\subset \operatorname{int}(\widehat{\partial C_{\theta,\infty}}(e))$, by Lemma \ref{s2:prop-weak-convex-rigidity}, there is also a strictly convex point in the interior of $\Sigma$.
	
In the following, we assume that $\Sigma$ is not the flat ball $C_{\theta,\infty}(e)$.  Let
	\begin{equation*}
		\chi(x,t)=\min_{|\xi|=1}{h_{ij}{\xi}^i{\xi}^j}, \quad x\in \Sigma_t
	\end{equation*}
be the smallest principal curvature at the point $x\in \Sigma_t$. 	Since $h_{ij}$ is smooth, the function $\chi(x,t)$ is Lipschitz continuous in space and by a cut-off function argument, we find a smooth function $\phi_0:\bar{\mathbb{B}}^n\to\mathbb{R}$ such that $0\leq\phi_0\leq\chi(x,0)$ and there exists an interior point $y$ such that $\phi_0(y)>0$. We extend the function $\phi_0$ to $\phi:\bar{\mathbb{B}}^n\times[0,\delta')\to\mathbb{R}$ by solving a linear parabolic PDE:
	\begin{equation}\label{flow-heat}
		\left\{\begin{aligned}
			\frac{\partial}{\partial t}\phi&=\Delta\phi+\nabla_{V}{\phi}, \qquad \text{in}\quad \bar{\mathbb{B}}^n \times[0,\delta'),\\
			\nabla_{\mu}\phi&=0 \qquad \text{on}\quad \partial\bar{\mathbb{B}}^n \times[0,\delta'),\\
			\phi(\cdot,0)&={\phi}_0(\cdot) \qquad \text{on} \quad \bar{\mathbb{B}}^n,
		\end{aligned}\right.
	\end{equation}
	where $\Delta$ and $\nabla$ are Laplacian operator and Levi-Civita connection with respect to the induced metric on $\Sigma_t=x(\bar{\mathbb{B}}^n,t)$ of the flow \eqref{flow-mean}. The solution $\phi$ of \eqref{flow-heat} exists at least for a short time interval $[0,\delta')$. By the strong maximum principle for scalar functions (see \cite[Corollary 3.2]{Stahl1996-2}), we have $\phi(\cdot,t)>0$ in $\bar{\mathbb{B}}^n$ and $t\in(0,\delta')$.
	
	We take $\tau=\frac{1}{2}\min\{\delta,\delta'\}$ and consider the tensor:
	\begin{equation}
		M_{ij}=h_{ij}-\phi g_{ij}
	\end{equation}
	for time $t\in[0,\tau)$. By the construction of $\phi_0$, we see that $M_{ij}\geq 0$ at time $t=0$. We now apply the tensor maximum principle (i.e. Theorem \ref{s1:thm-max principle}) to deduce that $M_{ij}\geq 0$ is preserved along the flow \eqref{flow-mean}.
	
	By a direct computation using Proposition \ref{s3:lem-evol-general-speed} for $f=-H$, we have:
	\begin{equation}
		\frac{\partial}{\partial t} M_{ij}=\Delta M_{ij}+\nabla_{V} M_{ij}+N_{ij},
	\end{equation}
	where
	\begin{equation}
		N_{ij}=|A|^2 M_{ij}+|A|^2\phi g_{ij}-2HM_i^\ell M_{j\ell}-2H\phi M_{ij}+M_j^k\nabla_{i}{V_k}+M_i^k\nabla_{j}{V_k}.
	\end{equation}
	We easily see that whenever $M_{ij}\geq 0$ and $M_{ij}\xi^i=0$ at a point, we have
	\begin{equation}
		N_{ij}\xi^i\xi^j=|A|^2\phi\geq 0
	\end{equation}
	and thus the condition \eqref{conditon-MP} is satisfied. The boundary condition \eqref{eq-bou} can be checked similarly as in Theorem \ref{s4:thm-strict-convex}. Hence Theorem \ref{s1:thm-max principle} implies that $M_{ij}\geq 0$ is preserved along the flow \eqref{flow-mean} for time $t\in[0,\tau)$, and it follows that $\Sigma_t$ is strictly convex for time interval $(0,\tau)$.
	
	To show the strict convexity for the whole time interval $(0,\delta)$, we fix a time $t_0\in(0,\tau)$. Since $\Sigma_t$ is strictly convex at time $t_0$, then there exists a constant $\varepsilon>0$, such that $h_{ij}\geq \varepsilon g_{ij}$ holds everywhere on $\Sigma_{t_0}$. A similar procedure as above can be used to show that $\tilde{M}_{ij}=h_{ij}-\varepsilon g_{ij}\geq 0$ is preserved along the flow \eqref{flow-mean} for all time $t>t_0$ as long as the flow \eqref{flow-mean} exists, which finishes the proof.
\end{proof}

\end{document}